\renewcommand{\@algocf@capt@plain}{above}
\newcommand{\RR}{\mathbbm{R}}
\theoremstyle{plain} 
\newtheorem{theorem}{Theorem}
\newtheorem{lemma}[theorem]{Lemma}
\newtheorem{proposition}[theorem]{Proposition}
\newtheorem{corollary}[theorem]{Corollary}
\theoremstyle{definition}  
\newtheorem{definition}{Definition}
\newtheorem{example}{Example}
\theoremstyle{remark}  
\newcommand{\F}{{\mathcal F}}
\newcommand{\R}{{\mathbb R}}
\newcommand{\M}{{\mathcal M}}
\newcommand{\rank}{\text{rank}}
\newcommand{\diag}{\operatorname{diag}}
\DeclareMathOperator{\EM}{EM}
\begin{document}

\title{Maximum likelihood estimation of the Latent Class Model through model boundary decomposition}



\author{Elizabeth Allman}
\email{e.allman@alaska.edu}
\address{Department of Mathematics and Statistics, University of Alaska Fairbanks}

\author{Hector Ba\~{n}os Cervantes}
\email{hdbanoscervantes@alaska.edu}
\address{Department of Mathematics and Statistics, University of Alaska Fairbanks}

\author{Robin Evans}
\email{evans@stats.ox.ac.uk}
\address{Department of Statistics, University of Oxford}

\author{Serkan Ho\c{s}ten}
\email{serkan@sfsu.edu}
\address{Department of Mathematics, San Francisco State University}

\author{Kaie Kubjas}
\email{kaie.kubjas@aalto.fi}
\address{Department of Mathematics and Systems Analysis, Aalto University;\newline
\indent Laboratory for Information \& Decision Systems, Massachusetts Institute of Technology;\newline
\indent Laboratoire d'Informatique, Sorbonne Universit\'e}

\author{Daniel Lemke}
\email{dlemke01@gmail.com}
\address{Department of Mathematics, San Francisco State University}

\author{John Rhodes}
\email{j.rhodes@alaska.edu}
\address{Department of Mathematics and Statistics, University of Alaska Fairbanks}

\author{Piotr Zwiernik}
\email{piotr.zwiernik@upf.edu}
\address{Department of Economics and Business, Universitat Pompeu Fabra, Barcelona}

\thanks{Kubjas was supported by the European Union's Horizon 2020 research and innovation programme 
(Marie Sk\l{}odowska-Curie grant agreement No 748354). Zwiernik acknowledges the support of the Beatriu de Pin\'{o}s fellowship of the Government of Catalonia's Secretariat for Universities and Research of the Ministry of Economy and Knowledge.}

\newcommand{\indep}{{\;\bot\!\!\!\!\!\!\bot\;}}
\newcommand{\tcm}{\textcolor{magenta}}

\pagestyle{plain}

\begin{abstract}
  The Expectation-Maximization (EM) algorithm is routinely used for
  maximum likelihood estimation in latent class analysis.
  However, the EM algorithm comes with no global guarantees of reaching the
  global optimum. We study the geometry of the latent class model in
  order to understand the behavior of the maximum likelihood
  estimator. In particular, we characterize the boundary
  stratification of the binary latent class model with a binary hidden
  variable.  For small models, such as for three binary observed
  variables, we show that this stratification allows exact computation
  of the maximum likelihood estimator.
  In this case we use simulations to study the maximum likelihood estimation attraction basins
  of the various strata and performance of the EM algorithm.  Our theoretical study is complemented with a
  careful analysis of the EM fixed point ideal which provides an
  alternative method of studying the boundary stratification and maximizing the likelihood function. In particular, we compute
  the minimal primes of this ideal in the case of a binary
  latent class model with a binary or ternary hidden random
  variable.
\end{abstract}


\date{28  July 2018}

\maketitle

\section{Introduction}

Latent class models are popular models used in social sciences and
machine learning. They were introduced in the 1950s by Paul
Lazarsfeld~\cite{lazarsfeld1950logical} and were used to find groups
in a population based on a hidden attribute (see
also~\cite{lazarsfeld1968lsa}). The model obtained its modern
probabilistic formulation
in the 1970s (e.g. ~\cite{Goodman1979}); we refer to~\cite{FienbergLC}
for a more detailed literature review.  More recently, latent class
models have also become widely used in machine learning, where they are
called naive Bayes models. They are a popular method of text
categorization, and are also used in other classification
schemes~\cite[Section~8.2.2]{bishop}.

The latent class model is an instance of a model with incomplete
data. Maximum likelihood estimation in such models may be
challenging, and is typically done using the EM
algorithm~\cite{EM}. Stephen Fienberg in his
discussion~\cite{EMdiscussion} of the paper introducing the EM
algorithm shed some light on its potential problems---his comments 
are relevant to the latent class
model. Referring to~\cite{haberman} Fienberg noted two main problems:
(a) even for cases where the log-likelihood for the problem with complete
data is concave, the log-likelihood for the incomplete problem need not be
concave, and (b) the likelihood equations may not have a solution
in the interior of the parameter space. He then wrote:
\begin{quote}
  In the first case multiple solutions of the likelihood equations can
  exist, not simply a ridge of solutions corresponding to a lack of
  identification of parameters, and in the second case the solutions
  of the likelihood equations occur on the boundary of the parameter
  space,[\ldots].
\end{quote}
The latent class model can be formulated as a graphical model with an
unobserved variable defined by a star graph like in Figure
\ref{fig:star}.  Given that the variable for the internal vertex was observed,
the underlying model becomes a simple instance of an exponential
family and, consequently, admits a concave log-likelihood function with a closed
formula for the maximizer.  However, the marginal likelihood will
typically have many critical points, and the maxima may lie on the
boundary of the model. In practice, to avoid the boundary problem, Bayesian
methods need to be employed to push the solutions away from the
boundary by using appropriate priors \cite{garre2006avoiding}.

\begin{figure}[htp]
\tikzstyle{vertex}=[circle,fill=black,minimum size=5pt,inner sep=0pt]
\tikzstyle{hidden}=[circle,draw,minimum size=5pt,inner sep=0pt]
  \begin{tikzpicture}
  \node[vertex] (1) at (-.7,.7)  [label=above:$1$] {};
    \node[vertex] (2) at (-.8,-.4)  [label=left:$5$] {};
    \node[vertex] (3) at (.4,.9) [label=above:$2$]{};
    \node[vertex] (4) at (.95,.1) [label=right:$3$]{};
    \node[vertex] (5) at (.3,-0.9) [label=right:$4$]{};
    \node[hidden] (a) at (0,0) {};
    \draw[line width=.3mm] (a) to (1);
    \draw[line width=.3mm] (a) to (2);
    \draw[line width=.3mm] (a) to (3);
        \draw[line width=.3mm] (a) to (4);
    \draw[line width=.3mm] (a) to (5);
  \end{tikzpicture}
  \caption{The star graph model with $5$ leaves. The internal vertex represents an unobserved random variable.}\label{fig:star}
\end{figure}
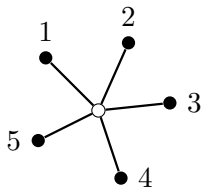

\subsection{Outline of Results} Our aim is to study the boundary problem for the latent
class model from the perspective of maximum likelihood estimation.  We
will use the link between latent class models and nonnegative
tensor rank. For instance, the latent class model with three binary
observed variables and one binary hidden variable is the model of
normalized nonnegative $2 \times 2 \times 2$ tensors of nonnegative
rank ($\rank_+$) at most two.  We will rely on recent work in
algebraic statistics on the description of the (algebraic) boundary of
tensors of nonnegative rank two~\cite{ARSZ}. Our
Theorem~\ref{thm-main1} gives a complete characterization of the
boundary strata of binary latent class models with a binary hidden
variable.

The geometry of these models allows us to identify boundary strata for
which the maximum likelihood problem is easy, such as certain
codimension two strata; see Section~\ref{subsec:general}. In
Section~\ref{subsec:MLE-M3}, we showcase the use of
Theorem~\ref{thm-main1} for the maximum likelihood estimation in the
$2 \times 2 \times 2$ case of $\rank_+ \leq 2$ by solving the problem
exactly: we provide a formula for the maximizer of the likelihood
function over the algebraic set defining each boundary strata.
Together with recent work in \cite{SM17}, this is the first
non-trivial example of the exact solution provided for a latent class
model, which typically is fitted using the EM algorithm.  The geometry
used for this exact solution also provides insight into the maximum
likelihood estimation in this model class, validating some of the
concerns of Fienberg. We report the results of our simulations which
show that the overwhelming majority of data has a maximum likelihood
estimator on the boundary of the model (where some model parameters
are zero). Indeed, under certain scenarios, even if the true underlying distribution lies in
the interior of the model, the maximum likelihood estimator may be found on the 
boundary with high probability. 
We also 
examine briefly the model of $3 \times 3 \times 2$ tensors of nonnegative rank $3$. 
Our simulations indicate that this model occupies a tiny portion (approximately $.019 \%$)
of the probability simplex.

In Section~\ref{sec:EM-fixed}, we study the algebraic description of
the fixed points of the EM algorithm inspired by~\cite{KRS}.  In
particular, we compute the irreducible components of the EM
fixed point ideal for the $2 \times 2 \times 2$ tensors of $\rank_+ \leq 2$ 
and of $\rank_+ \leq 3$.
In the first case, we demonstrate that we
can recover the formulas in Section~\ref{subsec:MLE-M3} from certain
components of the EM fixed point ideal via elimination. In the second case, the irreducible decomposition 
we compute validates the results in \cite{SM17} on the boundary decomposition of this model. 

\section{Definitions and Background}\label{sec:basics}

If $X$ is a random variable with values in $\{1,\ldots,k\}$, then its
distribution is a point $(p_1,\ldots,p_k)$ in the \emph{probability simplex}
$$\Delta_{k-1} \;\;=\;\; \{(x_1, \ldots, x_k) \in \RR^k \, : \, x_1 + \ldots + x_k =1, \,\,\, x_1,\ldots, x_k \geq 0\}.$$ 
The vector $X=(X_1,\ldots,X_n)$ is a \emph{binary random vector} if 
$X_i\in \{1,2\}$ for each $1\leq i\leq n$. 
A \emph{binary tensor} $P=(p_{i_1\cdots i_n})$, where $i_j\in \{1,2\}$, is 
a $2\times 2\times \cdots\times 2$ table 
of real numbers in $\R^{2\times \cdots\times 2}=\R^{2^n}$. A tensor is \emph{nonnegative} if it has only 
nonnegative entries. Every probability distribution for a binary vector $X=(X_1,\ldots,X_n)$ is a nonnegative 
binary tensor in the probability simplex $\Delta_{2^n-1}$: 
$$
p_{i_1i_2\cdots i_n}\;=\;{\rm Prob}(\{X_1=i_1,X_2=i_2,\ldots,X_n=i_n\}).
$$

The \emph{binary latent class model} $\M_{n,r}$ is a statistical model
for a vector of $n$ binary random variables $X=(X_1,\ldots,X_n)$. It
contains all distributions such that $X_1,X_2,\ldots,X_n$ are
independent given an unobserved random variable with $r\geq 1$
states. The model is parameterized by the distribution
$\lambda=(\lambda_1,\lambda_2,\ldots, \lambda_r)\in \Delta_{r-1}$ of
the unobserved variable and the conditional distributions of each
$X_i$ given the unobserved variable, which we write in form of a
stochastic matrix
$$ 
A^{(i)} \, = \, \left( \begin{array}{cc} a_{11}^{(i)} &  a_{12}^{(i)} \\[.3cm] \vdots & \vdots \\[.3cm] a_{r1}^{(i)} &  a_{r2}^{(i)} \end{array} \right), \;\;\;\; i=1,\ldots, n,
$$  
where
$a_{kl}^{(i)} \geq 0$  
and $a_{j1}^{(i)} + a_{j2}^{(i)}=1$ for each
$j=1,\ldots,r$. 
Letting $C_n$ denote the $n$-dimensional cube
$\Delta_1^n$, then the parameter space of 
{$\M_{n,r}$ is $\Theta:= \Delta_{r-1} \times (C_n)^r$ }
with elements 
$$
{
\theta\;\;=\;\;(
\lambda_1, \ldots, \lambda_{r}, \; \;
a_{11}^{(1)}, a_{12}^{(1)},  \ldots, a_{r1}^{(1)}, a_{r2}^{(1)}, \; \;
\ldots , \; \;
a_{11}^{(n)}, a_{12}^{(n)},  \ldots, a_{r1}^{(n)}, a_{r2}^{(n)}
).
}
$$ 
To be succinct, a choice of parameters $\theta$ is also denoted by 
$\theta = (\boldsymbol \lambda, A^{(1)}, A^{(2)}, \ldots, A^{(n)})$.
The parameterization $ \phi_{n,r}: \:  \Theta \to \Delta_{2^n-1} $  of $\M_{n,r}$ is given by
\begin{equation}\label{eq:paramMn}
\phi_{n,r} \, : \;\;\;\;\, \theta \;\;\;\;\;\mapsto\;\;\;\;\;
p_{j_1j_2\cdots j_n}(\theta) \;\;=\;\; \lambda_1   a_{1j_1}^{(1)} a_{1j_2}^{(2)} \cdots a_{1j_n}^{(n)}  + \cdots+\lambda_r   a_{rj_1}^{(1)} a_{rj_2}^{(2)} \cdots a_{rj_n}^{(n)}.	
\end{equation}

This parameterization shows that the distributions in $\M_{n,r}$
admit a decomposition into $r$ summands, which can be phrased in terms
of tensor decompositions. A binary tensor $P$ has \emph{rank one} if
it is an outer product of $n$ vectors in $\R^2$; that is, there exist
$u_1,\ldots,u_n\in \R^2$ such that $p_{i_1i_2\cdots
  i_n}=u_{1i_1}u_{2i_2}\cdots u_{ni_n}$. 
A tensor has \emph{nonnegative rank ($\rank_+$)
  at most} $r$ if it can be written as a sum of $r$ nonnegative
tensors of rank one. Equivalently, a binary tensor with $\rank_+ \leq
r$ is a point in the image of a map $\psi_{n,r}:\: (\R^{2}_{\geq
  0})^{nr}\longrightarrow \R^{2^n}$ defined as
$$\psi_{n,r} \, :\;\; \, \prod_{i=1}^n\prod_{j=1}^r (u_{j1}^{(i)}, u_{j2}^{(i)})   \;\;\;\mapsto\;\;\;
p_{i_1i_2\cdots i_n} \;=\; \sum_{j=1}^r u_{ji_1}^{(1)} u_{ji_2}^{(2)} \cdots u_{ji_n}^{(n)}  .$$

For more on the connection between tensor rank, nonnegative tensor rank,
and several of the latent class models under consideration here,
see, for example, \cite{ARM09, dSL} or for a connection to phylogenetic models \cite{ART2014}.
Here we simply formulate the following result.
\begin{proposition}
The set of binary tensors with $\rank_+ \leq r$ is  the cone over the binary latent class model $\M_{n,r}$. 	
\end{proposition}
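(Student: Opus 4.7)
The plan is to prove the two inclusions by explicitly converting between the two parameterizations $\phi_{n,r}$ and $\psi_{n,r}$ introduced just above. Recall that the cone over $\M_{n,r}$ is the set of tensors of the form $tP$ with $t\geq 0$ and $P\in\M_{n,r}$.

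For the easy inclusion, suppose $P=\phi_{n,r}(\theta)\in\M_{n,r}$ for some $\theta=(\boldsymbol\lambda, A^{(1)},\ldots,A^{(n)})$. Writing $a_j^{(i)}=(a_{j1}^{(i)},a_{j2}^{(i)})\in\R^2_{\ge 0}$, the parameterization \eqref{eq:paramMn} reads $P=\sum_{j=1}^r \lambda_j\, a_j^{(1)}\otimes\cdots\otimes a_j^{(n)}$, which is an explicit sum of $r$ nonnegative rank-one tensors; hence $\rank_+ P\le r$. For any $t\ge 0$, absorbing $t$ into $\lambda_1$ of the first summand shows $\rank_+(tP)\le r$ as well.

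For the reverse inclusion, take any nonnegative tensor $Q$ with $\rank_+ Q\le r$, and fix a decomposition $Q=\sum_{j=1}^r u_j^{(1)}\otimes\cdots\otimes u_j^{(n)}$ with $u_j^{(i)}\in\R^2_{\ge 0}$ produced by $\psi_{n,r}$. For each pair $(i,j)$ let $s_j^{(i)}:=u_{j1}^{(i)}+u_{j2}^{(i)}\ge 0$, and set $\mu_j:=\prod_{i=1}^n s_j^{(i)}$. If $\mu_j>0$, define $a_j^{(i)}:=u_j^{(i)}/s_j^{(i)}$, which is a stochastic vector, and observe that $u_j^{(1)}\otimes\cdots\otimes u_j^{(n)}=\mu_j\, a_j^{(1)}\otimes\cdots\otimes a_j^{(n)}$. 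If $\mu_j=0$ then the $j$-th rank-one summand of $Q$ vanishes, and we may replace the offending $u_j^{(i)}$'s by any stochastic vectors $a_j^{(i)}$ without changing $Q$ (since we will set $\lambda_j=0$ below). Collecting terms, $Q=\sum_{j=1}^r \mu_j\, a_j^{(1)}\otimes\cdots\otimes a_j^{(n)}$.

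Now let $t:=\sum_{j=1}^r \mu_j$, which equals the total mass $\sum_{i_1,\ldots,i_n}q_{i_1\cdots i_n}$ of $Q$. If $t=0$ then $Q=0$ and $Q$ trivially lies in the cone. Otherwise set $\lambda_j:=\mu_j/t$, so that $\boldsymbol\lambda\in\Delta_{r-1}$, and define $P:=Q/t=\sum_j \lambda_j\, a_j^{(1)}\otimes\cdots\otimes a_j^{(n)}=\phi_{n,r}(\boldsymbol\lambda, A^{(1)},\ldots,A^{(n)})\in\M_{n,r}$; then $Q=tP$ lies in the cone over $\M_{n,r}$, as required. The only mild subtlety is the bookkeeping when some $s_j^{(i)}$ vanishes, which is handled by the zero-summand substitution described above; everything else is a direct computation from the two parameterizations.
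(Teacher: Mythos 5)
Your argument is correct, and it is worth noting that the paper itself states this proposition without proof, merely citing \cite{ARM09, dSL} for background; your write-up supplies exactly the standard argument one would expect, namely passing between the two parameterizations $\phi_{n,r}$ and $\psi_{n,r}$ by rescaling each nonnegative rank-one factor to a stochastic vector and collecting the extracted scalars into $\mu_j$, with correct handling of the degenerate summands where some $s_j^{(i)}=0$. The only flaw is a trivial slip in the easy inclusion: to see $\rank_+(tP)\le r$ you should replace every $\lambda_j$ by $t\lambda_j$ (since $t\sum_j \lambda_j\, a_j^{(1)}\otimes\cdots\otimes a_j^{(n)}=\sum_j (t\lambda_j)\, a_j^{(1)}\otimes\cdots\otimes a_j^{(n)}$), not absorb $t$ into the first summand alone, which would not reproduce $tP$. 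With that one-word fix the proof is complete.
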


In this paper we focus primarily on models with two latent classes,
$r=2$, and write $\M_n:=\M_{n,2}$ and $\phi_n:=\phi_{n,2}$. 
This case in some ways is
`easy' since both the algebraic boundary (Proposition \ref{prop-boundary}) and the singular set of the parameterization 
map (Proposition \ref{prop-sing}) are well understood.  When considering questions of
higher nonnegative rank, we have no such tools at our disposal.
Binary
tensors of $\rank_+ \leq 2$
were studied in~\cite{ARSZ}, where the following theorem
gives a description of $\mathcal M_n$ as a semi-algebraic set.

\smallskip

\begin{theorem}~\cite[cf. Theorem~1.1]{ARSZ}\label{th:ARSZ} A binary
  tensor $P = (p_{i_1i_2\cdots i_n})$ has nonnegative rank at most two
  if and only if $P$ has flattening rank at most two and $P$ is
  supermodular.
\end{theorem}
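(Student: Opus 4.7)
The plan is to prove the two directions separately, with the forward direction being essentially a direct verification and the reverse direction requiring real work.

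For the forward (``only if'') direction, suppose $P = \sum_{k=1}^{2} \bigotimes_{i=1}^{n} a^{(i,k)}$ with nonnegative vectors $a^{(i,k)} \in \R^{2}_{\geq 0}$. Every flattening of a rank-one tensor is a rank-one matrix, so every flattening of $P$ is a sum of two rank-one matrices and thus has rank at most two. For supermodularity, note that a single nonnegative rank-one tensor is log-modular in the sense that $p_{\mathbf i}\,p_{\mathbf j}=p_{\mathbf i\vee\mathbf j}\,p_{\mathbf i\wedge\mathbf j}$, and the convex combination of two product measures (indexed by the hidden binary class) is then associated in the FKG sense. Unwinding this association property for binary tensors gives precisely the supermodular inequality $p_{\mathbf i\vee\mathbf j}\,p_{\mathbf i\wedge\mathbf j}\geq p_{\mathbf i}\,p_{\mathbf j}$.

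For the reverse (``if'') direction I would proceed by induction on $n$. The base case $n=2$ is immediate: a nonnegative $2\times 2$ matrix of (ordinary) rank at most two automatically has nonnegative rank at most two. For the inductive step, slice $P$ along the last mode into two nonnegative order-$(n-1)$ tensors $P^{(1)}$ and $P^{(2)}$. The flattening rank and supermodularity hypotheses restrict to each slice, so by the inductive hypothesis each $P^{(k)}$ admits a nonnegative rank-two decomposition. The remaining task is to glue these two decompositions into a single nonnegative rank-two decomposition of $P$. Here one uses that the overall flattening $P_{(n\mid 1,\ldots,n-1)}$ still has rank at most two: this forces the two slices to lie in a common two-dimensional column space, so up to rescaling their rank-one components must be essentially the same pair of order-$(n-1)$ rank-one tensors, with only the scalar weights depending on $i_n$. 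Assembling these weights as the last vector $a^{(n,k)}$ produces the desired decomposition.

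The main obstacle is the gluing step. When a $2\times 2$ nonnegative matrix of rank two admits more than one pair of rank-one summands (which happens generically), one must pick compatible pairs across slices; this is where supermodularity, rather than just flattening rank, is indispensable. I would handle this by first establishing essential uniqueness of the nonnegative rank-two decomposition under a genericity assumption (using the fact that such a decomposition is unique up to scaling and swapping of the two components whenever the slice is not itself of rank one), and then deal with degenerate cases—where one slice drops rank or where entries vanish—by a direct perturbation/limiting argument that keeps supermodularity intact. The supermodular inequalities then guarantee that the scalar weights $a^{(n,k)}_{j}$ solved from the gluing equations come out nonnegative, which is exactly the obstruction that pure flattening rank fails to control.
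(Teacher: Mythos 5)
Your statement is imported: the paper does not prove Theorem~\ref{th:ARSZ} but cites it from \cite{ARSZ}, so the only fair comparison is between your sketch and the (substantial) proof in that reference. Measured against what the theorem actually requires, your proposal has gaps in both directions. In the ``only if'' direction you suppress the tuple $\pi$: a sum of two nonnegative rank-one tensors is \emph{not} supermodular with respect to the given labeling of states in general, only after each coordinate's two states are sorted according to the ratio of the two rank-one components, and choosing that sorting is the actual content of the verification. Your appeal to FKG association also runs the implication the wrong way: association is a \emph{consequence} of log-supermodularity (MTP${}_2$), not something one can ``unwind'' to recover the inequality \eqref{eq:logsuper}. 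The direction is true and provable by a direct computation, but not by the route you describe.

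The ``if'' direction contains the genuine gap. The flattening $P_{(n\mid 1,\ldots,n-1)}$ you invoke is a $2\times 2^{n-1}$ matrix and therefore has rank at most two automatically; it forces nothing about the two slices sharing rank-one components, so your gluing mechanism has no engine. More fundamentally, the step ``the supermodular inequalities then guarantee that the scalar weights come out nonnegative'' is exactly the theorem, not a lemma you may defer: already for $n=3$ both $2\times 2$ slices trivially have nonnegative rank at most two, so the entire difficulty is to produce a \emph{common} pair of nonnegative rank-one matrices $M_1,M_2$ with $P^{(1)}=\alpha_1 M_1+\alpha_2 M_2$ and $P^{(2)}=\beta_1 M_1+\beta_2 M_2$, $\alpha_i,\beta_i\ge 0$, and this is where the cross-slice inequalities (those with $i_n\neq j_n$) must be used in an essential, quantitative way. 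Your plan never touches those inequalities. Until that base construction is carried out, the induction is a restatement of the problem rather than a proof; this is precisely why the result occupies most of \cite{ARSZ} rather than a page.
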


A matrix flattening of the binary tensor $P$ is a $2^{|\Gamma|} \times 2^{n -
  |\Gamma|}$ matrix where $\Gamma \subset \{1, \ldots, n\}$ with $1
\leq |\Gamma| \leq n-1$. 
The flattening rank is the maximal rank of any of
these matrices. This rank condition provides the equations for the
semi-algebraic description since the rank of a matrix is at most two
if and only if all $3$-minors of that matrix vanish. 
Now, we briefly
also explain the supermodularity.  Let $\pi = (\pi_1, \ldots, \pi_n)$
be an $n$-tuple of permutations $\pi_j \in S_2$. We say $P$ is
$\pi$-supermodular if
\begin{equation}\label{eq:logsuper}
 p_{i_1i_2\cdots i_n} \, p_{j_1j_2\cdots j_n} \;\; \leq\;\; p_{k_1k_2\cdots k_n} \, p_{\ell_1 \ell_2\cdots \ell_n} 	
\end{equation}
holds when $\{i_s, j_s\} = \{k_s, \ell_s\}$ and $\pi_s(k_s) \leq \pi_s(\ell_s)$ for $s=1, \ldots, n$. The tensor $P$ is supermodular if it is $\pi$-supermodular
for some $\pi$.

\begin{corollary}
	The semi-algebraic description of the binary latent class model is given by Theorem~\ref{th:ARSZ} together with the extra constraint that 
$\sum_{i_1,\ldots, i_n} p_{i_1\cdots i_n}=1$.
\end{corollary}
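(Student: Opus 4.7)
The plan is to derive the corollary as an immediate consequence of the preceding Proposition together with Theorem~\ref{th:ARSZ}. The Proposition identifies the set of binary tensors of nonnegative rank at most two with the affine cone over $\mathcal{M}_n$; equivalently, every point $P$ with $\rank_+(P)\leq 2$ can be written as $\lambda \cdot Q$ for some $\lambda \geq 0$ and $Q \in \mathcal{M}_n$, and conversely every such $\lambda \cdot Q$ has $\rank_+ \leq 2$. Since $\mathcal{M}_n \subset \Delta_{2^n-1}$ consists precisely of probability distributions, $\mathcal{M}_n$ is recovered by slicing this cone with the affine hyperplane $\{\sum_{i_1,\ldots,i_n} p_{i_1\cdots i_n} = 1\}$.

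With this identification in hand, the first step is to invoke Theorem~\ref{th:ARSZ}, which gives a semi-algebraic description of the cone over $\mathcal{M}_n$ by the two conditions that $P$ has flattening rank at most two (a polynomial condition given by vanishing of all $3\times 3$ minors of each flattening) and that $P$ is $\pi$-supermodular for some $\pi$ (a finite disjunction of systems of polynomial inequalities of the form~(\ref{eq:logsuper})). The second step is simply to intersect this semi-algebraic set with the linear equation $\sum p_{i_1\cdots i_n}=1$, producing a semi-algebraic description of $\mathcal{M}_n$.

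The only subtlety worth flagging is that nonnegativity of the entries of $P$ is not listed explicitly among the conditions of Theorem~\ref{th:ARSZ}, yet $\mathcal{M}_n$ lies in the nonnegative orthant. This is not an obstacle: Theorem~\ref{th:ARSZ} is an \emph{if and only if} statement, and since any tensor of nonnegative rank at most two is automatically a sum of two nonnegative rank-one tensors, nonnegativity is implicit in—indeed, a logical consequence of—the flattening rank and supermodularity conditions. Hence no additional inequalities $p_{i_1\cdots i_n} \geq 0$ need to be appended, and the corollary follows.
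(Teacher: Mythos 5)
Your main argument is the right one, and it is the one the paper intends: the paper states this corollary without proof, as an immediate consequence of the preceding Proposition (binary tensors of $\rank_+\leq 2$ form the cone over $\M_n$) together with Theorem~\ref{th:ARSZ}, so that slicing the cone with the hyperplane $\sum p_{i_1\cdots i_n}=1$ recovers $\M_n$. Up to that point your proposal matches the paper.

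However, your final paragraph contains a genuine error. It is not true that nonnegativity of $P$ is a logical consequence of the flattening rank and supermodularity conditions. Take $n=3$ and the rank-one tensor $P=u\otimes v\otimes w$ with $u=v=(1,1)$ and $w=(1,-\tfrac{3}{4})$, so $p_{ijk}=w_k$. Its entries sum to $4\left(1-\tfrac{3}{4}\right)=1$, every flattening has rank one, and every inequality of the form~(\ref{eq:logsuper}) holds with equality (for a rank-one tensor both sides equal $\prod_s u^{(s)}_{i_s}u^{(s)}_{j_s}$ whenever $\{i_s,j_s\}=\{k_s,\ell_s\}$ for all $s$), so $P$ is $\pi$-supermodular for every $\pi$. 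Yet $P$ has negative entries and therefore is neither in $\M_3$ nor of nonnegative rank at most two. The correct resolution of the subtlety you flag is the opposite of what you assert: in Theorem~\ref{th:ARSZ} (Theorem~1.1 of~\cite{ARSZ}) nonnegativity of $P$ is a standing \emph{hypothesis}, not a consequence, so the inequalities $p_{i_1\cdots i_n}\geq 0$ must be retained as part of the semi-algebraic description; together with $\sum p_{i_1\cdots i_n}=1$ they simply express membership in $\Delta_{2^n-1}$. With that correction the corollary follows exactly as in your first two paragraphs.
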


We close this section with a result that simplifies some arguments regarding the boundary stratification of $\M_n$. 
\begin{lemma}\label{lem:marg}
  Let $\M_{n,r}$ be the latent class model for the random vector
  $X=(X_1,\ldots,X_n)$, and let $B\subset \{1,\ldots,n\}$ with $|B| =
  m$. Then the induced marginal model for $X_B = (X_i\, : \, i \in B)$
  is $\M_{m,r}$. In particular, if $P$ is a tensor in $\M_{n,r}$ given
  by parameters $(\lambda_1,\ldots,\lambda_r$, $A^{(1)},\ldots,
  A^{(n)})$, then the corresponding marginal distribution $P_B$ is
  given by parameters $\lambda_1,\ldots,\lambda_r$, and $A^{(i)}$ for
  $i\in B$.
\end{lemma}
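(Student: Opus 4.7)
The plan is a direct computation from the parameterization in \eqref{eq:paramMn}, using only the row-stochasticity of the matrices $A^{(i)}$. Let $B^c = \{1,\ldots,n\} \setminus B$. By definition, the marginal distribution of $X_B$ is obtained by summing the joint distribution over the coordinates indexed by $B^c$. First I would substitute the explicit product formula from \eqref{eq:paramMn}, interchange the two summations, and factor the resulting product into the part over $B$ and the part over $B^c$:
\[
(P_B)_{(j_i)_{i \in B}} \;=\; \sum_{(j_i)_{i \in B^c}} \sum_{s=1}^{r} \lambda_s \prod_{i=1}^{n} a_{s j_i}^{(i)} \;=\; \sum_{s=1}^{r} \lambda_s \prod_{i \in B} a_{s j_i}^{(i)} \prod_{i \in B^c} \bigl( a_{s 1}^{(i)} + a_{s 2}^{(i)} \bigr).
\]

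Next I would invoke the stochasticity constraint $a_{s1}^{(i)} + a_{s2}^{(i)} = 1$ for every $s$ and every $i$, so each factor in the product over $B^c$ collapses to $1$. What remains is precisely the parameterization \eqref{eq:paramMn} of $\M_{m,r}$ evaluated at the parameter tuple $(\lambda_1,\ldots,\lambda_r,\, A^{(i)} : i \in B)$. Since $\boldsymbol\lambda$ still lies in $\Delta_{r-1}$ and each retained $A^{(i)}$ is still row-stochastic, this tuple is a valid point of the parameter space of $\M_{m,r}$, which simultaneously proves both claims of the lemma.

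There is essentially no obstacle here; the argument is a single calculation that hinges on the fact that the two columns of each $A^{(i)}$ sum to the all-ones vector. The only thing to watch is the notational bookkeeping — keeping the hidden-class index $s$ distinct from the observed-coordinate indices $j_i$, and tracking precisely which factors survive once the marginalization sum is pushed inside the product.
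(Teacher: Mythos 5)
Your proof is correct and is essentially the same as the paper's: both marginalize by summing over the indices outside $B$, substitute the parameterization \eqref{eq:paramMn}, and use the row-stochasticity $a_{s1}^{(i)}+a_{s2}^{(i)}=1$ to collapse the factors over $B^c$. You have merely written out explicitly the interchange of sums and the factorization that the paper leaves implicit.
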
\begin{proof}
  The marginal distribution $P_B$ is obtained from $P = (p_{j_1\ldots
    j_n})$ by summing over all indices $j_k$ with $k\notin B$.  When
  we compute the sum using the parameterization (\ref{eq:paramMn}) the
  result follows because $a_{i1}^{(k)}+ a_{i2}^{(k)}=1$ for all
  $i=1,\ldots,r$.
\end{proof}

\section{Boundary Stratification of $\M_{n}$}\label{sec:stratification}


The semi-algebraic description of  $\M_{n}$ can also be used to understand the topological boundary of this set. 
When $n= 1, 2$, $\M_n$ is well-understood:  $\M_1=\Delta_1$ and 
$\M_2=\Delta_3$ respectively; see, e.g., \cite[Corollary 2.2]{gilula1979singular}.
Thus we focus on the case of three or more observed variables, and assume that $n\geq 3$ throughout. 
We begin our analysis with the following proposition.
\begin{proposition} \label{prop-boundary} The dimension of the model $\M_{n}$ is $2n+1$. The boundary of this 
semi-algebraic set is defined by $2n$ irreducible components. 
Each component is
 the image of the set in the domain of $\phi_n$ given by $a_{1j}^{(i)} = 0$ for $i=1, \ldots, n$ and $j=1,2$.
\end{proposition}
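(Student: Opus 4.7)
The plan is to compute $\dim\M_n$ by a parameter count, exhibit $2n$ subsets of $\Theta$ whose images form irreducible $2n$-dimensional pieces of $\partial\M_n$, and appeal to the boundary decomposition of nonnegative rank-two tensors from~\cite{ARSZ} to see that these pieces exhaust the boundary.

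For the dimension, $\Theta=\Delta_1\times(C_n)^2$ has dimension $1+2n$. The parameterization $\phi_n$ is polynomial and carries the label-swap symmetry (exchanging $\lambda_1,\lambda_2$ and simultaneously swapping the rows of each $A^{(i)}$), making it generically two-to-one; since the generic fiber is finite, $\dim\M_n=\dim\Theta=2n+1$ and the topological boundary of $\M_n$ is $2n$-dimensional.

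For each pair $(i,j)$ with $1\leq i\leq n$ and $j\in\{1,2\}$, define
\[
S_{i,j}\;=\;\{\theta\in\Theta : a_{1j}^{(i)}=0\}.
\]
Because $a_{j1}^{(i)}+a_{j2}^{(i)}=1$, the equation $a_{1j}^{(i)}=0$ is a single linear relation, so $S_{i,j}$ is an irreducible face of $\Theta$ of dimension $2n$. The restriction $\phi_n|_{S_{i,j}}$ again has the label swap as its only generic symmetry, so $Y_{i,j}:=\phi_n(S_{i,j})$ is irreducible of dimension $2n$. The label swap identifies $Y_{i,j}$ with $\phi_n(\{a_{2j}^{(i)}=0\})$, so no further candidate components arise from the second row of $A^{(i)}$. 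A generic point $P\in Y_{i,j}$ lies on $\partial\M_n$: the vanishing slice of one rank-one summand turns one of the supermodular inequalities (\ref{eq:logsuper}) into an equality, and a transverse perturbation breaks that inequality, pushing $P$ out of $\M_n$. The components $Y_{i,j}$ are pairwise distinct because the pair $(i,j)$ can be read off from which axis-aligned slice of $P$ degenerates.

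To conclude, I would invoke the description of the algebraic boundary of the set of nonnegative rank-two $2\times 2\times\cdots\times 2$ tensors from~\cite{ARSZ}: every codimension-one component of the boundary corresponds to setting a single entry of some rank-one summand to zero, and intersecting with $\Delta_{2^n-1}$ while quotienting by the label swap leaves exactly the $2n$ components $Y_{i,j}$. The main obstacle is this last step — verifying that no additional codimension-one stratum of the topological boundary arises from a supermodular equality untraceable through the parameterization to some $a_{1j}^{(i)}=0$ — and the systematic decomposition in~\cite{ARSZ} is what makes this step tractable.
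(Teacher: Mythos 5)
Your proposal is correct and follows essentially the same route as the paper: the dimension is obtained from generic identifiability (equivalently, the generically finite fiber of $\phi_n$), and the identification of the $2n$ boundary components — including the crucial exhaustion step — is deferred to Theorem~1.2 of~\cite{ARSZ} and its proof. The additional detail you supply about the irreducibility and dimension of the images $Y_{i,j}$ is a reasonable elaboration but does not change the logical skeleton.
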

\begin{proof} The dimension of $\M_{n}$ is the number of independent parameters in the domain of $\phi_n$. 
This follows because this model is generically identifiable, which is classically well known; see, e.g \cite{mchugh1956efficient}. 
The statement about 
the boundary is Theorem~1.2 in~\cite{ARSZ}, and the statement about each component is found in the proof of the same result. 
\end{proof}
Observe that these components are also defined by $a_{2j}^{(i)}=0$,
but one can interchange the rows of the matrices
$A^{(i)}$ 
and the entries of $\lambda$,
and get the same points on the boundary. 
This corresponds to `label swapping' on the latent variable.
Each component
is the collection of tensors where one slice has rank one. By a 
\emph{slice} of a tensor $P=(p_{i_1\cdots i_n})$, we mean a subtensor 
obtained by fixing one index $i_k$.
We note that for general tensors with nonnegative rank bigger
than two, the boundary of the corresponding model $\M$ is not well
understood. For instance, points on the boundary of the parameter
space defined by setting one parameter equal to zero no longer map to
the boundary of the model $\M$; see Example 5.2 in~\cite{ARSZ}.
A recent development is \cite{SM17} where the boundary of $\M_{3,3}$ has
been described.

In this paper, we consider also lower dimensional pieces of the
boundary of $\M_{n}$. Our motivation is to perform maximum likelihood
estimation over such models
efficiently. Proposition~\ref{prop-boundary} implies that various
intersections of the $2n$ irreducible codimension one components
define lower dimensional boundary pieces. We call a set of boundary
points of dimension $k$ obtained as such an intersection~a
\emph{$k$-dimensional stratum}.
We will identify and describe the boundary strata that are relevant
for maximum likelihood estimation. The relevant boundary strata are
those which are not
degenerate. 
\begin{definition}
  The degenerate part of $\Delta_{2^n-1}$ is the set of tensors $P =
  (p_{i_1 \cdots i_n})$ where for fixed $1 \leq j < k \leq n$ and a
  choice $i_j = s$ and $i_k = t$ with $s,t \in \{1,2\}$ the entries
  $p_{i_1 \cdots s \cdots t \cdots i_n} = 0$ for all $i_u$, $u \neq
  j,k$. \end{definition}
Another way of detecting that a binary tensor $P$ is degenerate is to
look at the marginal table $P_{\{j,k\}}$. If any of the entries of
this $2 \times 2$ table is zero for any $j,k$, then $P$ is degenerate. For instance,
if
$$P_{\{j,k\}} = \left( \begin{array}{cc} p_{11} & p_{12} \\ p_{21} & p_{22} \end{array} \right) \, = \, \left( \begin{array}{cc} \alpha & 0 \\ \beta & \gamma \end{array} \right)$$
with $\alpha, \beta, \gamma > 0$, then knowing that $X_j = 1$ implies
that $X_k =1$. By restricting to nondegenerate tensors, we avoid
this kind of probabilistically degenerate situation. We could have formulated our main theorem only 
for the interior of the probability simplex, since from a mathematical point of view, extending it to some 
parts of the boundary seems like an incremental gain.   From the statistical point of view, however,
this gain is quite  dramatic as it allows us to 
understand the maximum likelihood estimator even when data tables contain zeros 
(as long as two-way marginal tables have no zeros).
This is especially important for validating the simulations in 
Section \ref{sec:MLE}, when the sample sizes are relatively small.

\subsection{Singular locus of the parametrization map}

To state and prove our main result, we need an understanding of the singular
locus of the parameterization map $\phi_n$.   Recall that a
point of the domain $\Theta$ is a singular point of $\phi_n$ if the
Jacobian of the map drops rank at this point.  To describe this
set, we look at various (overlapping) subsets of the parameter space.
Specifically, let
 
\smallskip

\begin{enumerate}
\item[] $\Theta_{\lambda_1\lambda_2} \subseteq \Theta$ be the subset defined by $\lambda_1\lambda_2=0$; 
\item[] \hskip .4cm $\Theta_{ij} \subseteq \Theta$ be the subset where $\rank(A^{(k)}) = 1$ for all $k \neq i,j$ with $1 \leq i \neq j \leq n$;  
and
\item[] \hskip .5cm $\Theta_j \subseteq \Theta$ be the subset where $\rank(A^{(k)}) = 1$ for all $k \neq j$.
\end{enumerate}

\smallskip

\noindent
Finally, we denote by $\Theta^1$ the subset of $\Theta$
where $\rank(A^{(k)}) = 1$ for all $k$.   It is clear that $\Theta_{ij} = \Theta_{ji}$
and $\Theta^1 \subset \Theta_k \subset \Theta_{jk}$ for all $1 \leq j \neq k \leq n$. 

\smallskip

The probabilistic interpretation of these special loci is simple. The set $\Theta_{\lambda_1\lambda_2}$ corresponds to the parameters for which the latent variable is 
degenerate taking always the value $0$, or always the value $1$. The set $\Theta_{ij}$ corresponds to the special situation where all 
variables $X_k$ for $k\neq i,j$ are probabilistically independent of the latent variable. That is, only two observable variables in the system carry 
some information about the latent one. In the case of $\Theta_j$, only $X_j$ is allowed to nontrivially depend on the latent variable, and $\Theta^1$ 
corresponds to points where all observed random variables are independent of the latent one. 
Note that the points in the sets $\Theta_{\lambda_1\lambda_2}$, $\Theta_j$, and $\Theta^1$ correspond to the situation where all observed variables are independent
of each other.  

\smallskip 
\noindent
\emph{Remark}:
For those familiar with tensor decompositions, these subsets of parameters have
simple descriptions in terms of the ranks of the matrices $A^{(i)}$.  
Suppose that 
$\theta = (\boldsymbol \lambda, A^{(1)}, \ldots, A^{(n)})$, then the
\emph{$m$-rank} of $\theta$ is the $n$-tuple 
$(\rank(A^{(1)}), \ldots,  \rank(A^{(n)}) )$.  In this setting,
we see that, for example,  $\Theta_{12}$ corresponds to parameters
with $m$-rank $(r_1, r_2, 1, 1, \ldots, 1)$ with 
$r_1, r_2 \le 2$.   The subset $\Theta_1$ corresponds to parameters
with $m$-rank $(r_1,1,1, \ldots, 1)$, and $\Theta^1$ to those
parameters with $m$-rank $(1,1,\dots,1)$.  Indeed,
this perspective makes it quite easy to determine both the singular locus of $\phi_n$
and the tensor rank of the images of these parameter sets.

\smallskip 
\begin{proposition}\label{prop-sing} 
  The singular locus of the parametrization map $\phi_n$ is equal to
  $$\Theta_{\lambda_1\lambda_2} \cup \bigcup_{1 \leq i \neq j \leq n}
  \Theta_{ij}.$$
\end{proposition}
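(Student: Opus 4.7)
The plan is to identify the singular locus by examining when the Jacobian of $\phi_n$ drops rank below its generic value $2n+1 = \dim \M_n$ (cf.\ Proposition~\ref{prop-boundary}). Using the normalizations $\lambda_2 = 1 - \lambda_1$ and $a_{s2}^{(k)} = 1 - a_{s1}^{(k)}$, I regard $\Theta$ as an open subset of $\R^{2n+1}$ with independent coordinates $(\lambda_1, a_{11}^{(1)}, a_{21}^{(1)}, \ldots, a_{11}^{(n)}, a_{21}^{(n)})$. Writing $a_s^{(k)} = (a_{s1}^{(k)}, a_{s2}^{(k)})$, $T_s = \bigotimes_k a_s^{(k)}$, and $e = (1,-1)$, the $2n+1$ tangent vectors of $\phi_n$ at $\theta$ are
\[
v_\lambda = T_1 - T_2 \qquad\text{and}\qquad v_s^{(k)} = \lambda_s \bigotimes_{k' \neq k} a_s^{(k')} \otimes_{\text{pos }k} e,
\]
so the proposition reduces to characterizing when these $2n+1$ vectors fail to be linearly independent in $\R^{2^n}$.

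For the inclusion $\supseteq$, I treat the two families separately. If $\theta \in \Theta_{\lambda_1\lambda_2}$, say $\lambda_1 = 0$, then every $v_1^{(k)}$ vanishes identically, leaving at most $n+1$ nontrivial tangents and Jacobian rank at most $n+1 < 2n+1$. If $\theta \in \Theta_{ij}$, then $a_1^{(k)} = a_2^{(k)} =: b^{(k)}$ for every $k \notin \{i,j\}$, so the five tangents $v_\lambda, v_1^{(i)}, v_2^{(i)}, v_1^{(j)}, v_2^{(j)}$ all factor as $\bigl(\bigotimes_{k \notin \{i,j\}} b^{(k)}\bigr) \otimes M$ with $M \in \R^2 \otimes \R^2$ at positions $i,j$; being five vectors in a $4$-dimensional subspace, they are linearly dependent. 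The remaining $2(n-2)$ tangents live in disjoint ``$e$-at-position-$k$'' subspaces (for $k \notin \{i,j\}$) and contribute rank at most $2(n-2)$, yielding total Jacobian rank at most $4 + 2(n-2) = 2n < 2n+1$.

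For the inclusion $\subseteq$, I take $\theta$ with $\lambda_1 \lambda_2 > 0$ and at least three matrices, say $A^{(1)}, A^{(2)}, A^{(3)}$, of rank $2$ (which is exactly the negation of $\theta \in \bigcup_{i \neq j} \Theta_{ij}$), and show the $2n+1$ tangents are linearly independent. Suppose a relation $\alpha v_\lambda + \sum_{s,k} \beta_s^{(k)} v_s^{(k)} = 0$ holds. I flatten this identity at a rank-$2$ position $k \in \{1,2,3\}$, obtaining a $2 \times 2^{n-1}$ matrix equation. Using that $\{a_1^{(k)}, a_2^{(k)}\}$ is a basis of $\R^2$ and that $e=(1,-1)$ expands in this basis with both coefficients nonzero—since probability vectors on $\Delta_1$ are never proportional to $e$—I read off two vector equations in $\R^{2^{n-1}}$ whose terms involve the rank-$1$ factors $R_s = \bigotimes_{k' \neq k} a_s^{(k')}$ and their ``$e$-perturbations'' $W_s^{(k')}$ obtained by replacing the position-$k'$ factor of $R_s$ by $e$. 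A subsidiary argument shows these vectors are linearly independent in $\R^{2^{n-1}}$, which forces $\beta_1^{(k)} = \beta_2^{(k)} = 0$; iterating the flattening over $k=1,2,3$ eliminates the $\beta_s^{(k)}$ at rank-$2$ positions, and a final parallel step (using $\lambda_1\lambda_2 > 0$ and that $e$ is linearly independent of every $a_s^{(k')}$) kills the $\beta_s^{(k)}$ for any remaining rank-$1$ positions, leaving $\alpha v_\lambda = 0$ and hence $\alpha = 0$ because $T_1 \neq T_2$ when some $A^{(k)}$ has rank $2$.

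The principal obstacle is verifying that the families $\{R_1, R_2, W_s^{(k')}\}$ appearing in the flattened equations are linearly independent for \emph{every} $\theta$ outside the claimed singular locus, not merely for generic $\theta$. Handling positions where $A^{(k)}$ has rank one requires exploiting the three guaranteed rank-$2$ positions simultaneously rather than one at a time, which is the most delicate bookkeeping in the proof; an alternative, more conceptual route is to invoke Kruskal-type uniqueness of nonnegative rank-$2$ decompositions (valid under our hypothesis of three rank-$2$ factors) to deduce that the scheme-theoretic fiber of $\phi_n$ at $\phi_n(\theta)$ is zero-dimensional, which by a dimension count forces $d\phi_n|_\theta$ to be injective.
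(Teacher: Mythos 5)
Your containment $\supseteq$ is correct and goes by a genuinely different route than the paper's. The paper disposes of $\Theta_{\lambda_1\lambda_2}$, $\Theta_k$ and $\Theta^1$ by noting their images have nonnegative rank one, and handles parameters of $m$-rank $(2,2,1,\dots,1)$ by exhibiting a positive-dimensional fiber through $\theta$ (perturbing the factorization $(A^{(1)})^T\,\diag([\lambda_1,\lambda_2])\,A^{(2)}$ by a matrix $\Sigma$ near the identity). Your direct observation that on $\Theta_{ij}$ the five tangent vectors $v_\lambda, v_1^{(i)}, v_2^{(i)}, v_1^{(j)}, v_2^{(j)}$ all lie in the $4$-dimensional subspace $\bigl(\bigotimes_{k\notin\{i,j\}} b^{(k)}\bigr)\otimes(\R^2\otimes\R^2)$ is cleaner: it covers all of $\Theta_{ij}$ (hence $\Theta_k$ and $\Theta^1$ as special cases) in one stroke, with no fiber analysis. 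Your identification of the complement of $\bigcup_{i\neq j}\Theta_{ij}$ with ``at least three rank-two factors'' is also correct.

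The gap is in the containment $\subseteq$. Your primary flattening argument defers precisely the hard step: the assertion that the families $\{R_1,R_2,W_s^{(k')}\}$ are linearly independent for \emph{every} $\theta$ with $\lambda_1\lambda_2>0$ and three rank-two factors is announced as ``a subsidiary argument'' and then flagged by you as the delicate part, so no proof is actually given. Your proposed shortcut does not close this: a zero-dimensional fiber of $\phi_n$ over the single point $\phi_n(\theta)$ does \emph{not} force $d\phi_n|_\theta$ to be injective --- compare $t\mapsto t^2$ at $t=0$, where the fiber is a single point but the differential vanishes; zero-dimensionality of the scheme-theoretic fiber bounds the length of its local ring, not its embedded tangent space, unless you additionally know the fiber is reduced at $\theta$, which Kruskal's theorem does not give you for free. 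The paper's actual argument is that Kruskal's theorem, combined with the identifiability techniques it cites, shows the fiber has size exactly two for \emph{all} parameters in a neighbourhood of $\theta$ (the rank hypotheses being open); it is this uniform $2$-to-$1$ behaviour on a neighbourhood, via a local degree/multiplicity argument at $\theta$, that rules out a critical point --- not the finiteness of one fiber. So either you must carry out the linear-independence bookkeeping you postponed, or you must upgrade your Kruskal step from ``the fiber over $\phi_n(\theta)$ is finite'' to ``$\phi_n$ is two-to-one near $\theta$'' and supply the local-degree argument.
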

This result is not new, cf. \cite[Corollary 7.17]{MOZ} and could also be inferred from Theorems 13 and 
14 in \cite{geiger2001stratified}. We provide an alternative proof that is based on ideas from 
\cite{ARM09} and \cite{dSL}. 
\begin{proof}[Proof of Proposition \ref{prop-sing}]
It is clear that the sets $\Theta_{\lambda_1 \lambda_2}$ and $\Theta^1$ map
under $\phi_n$ to distributions in $\mathcal M_n$  of nonnegative rank $1$,  and thus that
the Jacobian drops rank at these points.  A simple computation shows 
that $\phi_n$ maps points in $\Theta_k$ to tensors of $\rank_+ = 1$, and the Jacobian is rank deficient at these parameter points too.  
Consider now those parameters $\theta$ with (up to permutation) $m$-rank 
$(2,2,1,\dots,1)$ and, without loss of generality, $\theta \notin \Theta_{\lambda_1, \lambda_2}$.
Let $P_\theta = \phi_n ( \theta )$.  We quickly show that $P_\theta$ has nonnegative rank
2, and that $\theta$ is a singular point of the parameterization.  Since $A^{(3)}, \ldots, A^{(n)}$
are singular matrices, let $\mathbf v$ be the tensor product of their top rows.  Stated
in more statistical language, $\mathbf v$ is the (vectorized) joint distribution of the independent 
binary variables $X_3, \ldots, X_n$. Using $A^{(1)}, A^{(2)}$ for the matrix parameters of rank $2$,
then the joint distribution $P_\theta$ is
$P_\theta = (A^{(1)})^T \, \diag ( [\lambda_1, \lambda_2 ]) \, A^{(2)} \otimes \mathbf v$.
Since $(A^{(1)})^T \, \diag ( [\lambda_1, \lambda_2 ]) \, A^{(2)} $ is a rank 2 matrix, $P_\theta$
is a rank 2 tensor.  However, 
the fiber of $P_\theta$ is positive dimensional.  This follows because the matrix
factorization $(A^{(1)})^T \, \diag ( [\lambda_1, \lambda_2 ]) \, A^{(2)}$ above is not unique.
If $\Sigma$ is taken to be any matrix sufficiently close to the identity and with column sums equal to $1$, 
then $\tilde{A}^{(1)} = \Sigma^T A^{(1)}$ is Markov, 
$\tilde{\boldsymbol \lambda} = \Sigma^{-1}  \diag ( [\lambda_1, \lambda_2 ]) A^{(2)}  \begin{pmatrix}
1 \\ 1 \end{pmatrix}$ has positive
entries, $\tilde{A}^{(2)} = \diag(\tilde{\boldsymbol \lambda})^{-1} \Sigma^{-1} \diag ( [\lambda_1, \lambda_2 ]) A^{(2)}$
is Markov, and $\phi_n ( \tilde{\boldsymbol \lambda}, \tilde{A}^{(1)}, \tilde{A}^{(2)}, A^{(3)}, \ldots A^{(n)}) $ also equals 
$P_\theta$.  It follows that $\theta$ is a singular point of the parameterization $\phi_n$.

Finally, consider parameters $\theta$ of $m$-rank $(2,2,2,r_4, \ldots, r_n)$ up to permutation,
$\theta \notin \Theta_{\lambda_1 \lambda_2}$.  Then by Kruskal's
Theorem \cite{Kruskal76, Kruskal77} together with techniques developed in \cite{ARM09}
for proving parameter identifiability, $\theta$ is identifiable and the fiber of $P_\theta$ is of size $2$.  This means that $\theta$ is 
not a singular point of $\phi_n$.
\end{proof}

We now state and prove two lemmas used repeatedly 
in the proof of Theorem \ref{thm-main1}.

\begin{lemma} \label{lem-singdim}
$\phi_n(\Theta_{ij})$ is an $(n+1)$-dimensional subset of $\Delta_{2^n-1}$ isomorphic to $\Delta_{3}\times(\Delta_1)^{n-2}$.
\end{lemma}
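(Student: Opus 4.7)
The plan is to work up to permutation of the indices and assume $(i,j) = (1,2)$. The key observation is that when $\mathrm{rank}(A^{(k)}) = 1$ for $k \geq 3$, the Markov (column-sum) constraint forces the two rows of $A^{(k)}$ to be identical, so $A^{(k)}$ is determined by a single distribution $q^{(k)} \in \Delta_1$. Substituting into the parameterization (\ref{eq:paramMn}) and recognizing the tensor structure, for $\theta \in \Theta_{12}$ one obtains
$$\phi_n(\theta) \;=\; Q_\theta \;\otimes\; q^{(3)} \;\otimes\; \cdots \;\otimes\; q^{(n)},$$
where $Q_\theta = (A^{(1)})^T \diag(\boldsymbol\lambda) A^{(2)}$ is a $2\times 2$ nonnegative matrix with entries summing to $1$, i.e. a point of $\Delta_3$. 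By Lemma~\ref{lem:marg}, $Q_\theta$ is exactly the marginal $\phi_n(\theta)_{\{1,2\}}$.

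First I would show that as $\theta$ ranges over $\Theta_{12}$, the factor $Q_\theta$ sweeps out all of $\Delta_3$, while each $q^{(k)}$ sweeps out $\Delta_1$ independently. The surjectivity onto $\Delta_3$ uses the known equality $\M_2 = \Delta_3$ recalled at the start of this section: every $2\times 2$ probability matrix can be written in the form $(A^{(1)})^T \diag(\boldsymbol\lambda) A^{(2)}$. This yields a surjective map
$$\Psi:\;\Delta_3 \times (\Delta_1)^{n-2} \;\longrightarrow\; \phi_n(\Theta_{12}), \qquad (Q, q^{(3)}, \ldots, q^{(n)}) \;\longmapsto\; Q \otimes q^{(3)} \otimes \cdots \otimes q^{(n)}.$$

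Next I would verify that $\Psi$ is injective, which is the heart of the isomorphism claim. Given $\Psi(Q, q^{(3)}, \ldots, q^{(n)})$, the factor $Q$ is recovered as the marginal on indices $\{1,2\}$, and each $q^{(k)}$ is recovered as the marginal on index $k$ (cf. Lemma~\ref{lem:marg}). Because every factor is a probability distribution and hence has entries summing to $1$ (in particular is nonzero), these marginalizations are well-defined and invert $\Psi$. Consequently $\Psi$ is a bijection, in fact a semi-algebraic isomorphism with polynomial inverse given by marginalization, so $\phi_n(\Theta_{ij}) \cong \Delta_3 \times (\Delta_1)^{n-2}$ and the dimension is $3 + (n-2) = n+1$.

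The main point requiring care is the surjectivity onto $\Delta_3$ of the map $(\boldsymbol\lambda, A^{(1)}, A^{(2)}) \mapsto Q_\theta$; everything else is bookkeeping once the parameterization is rewritten in the product form above. This surjectivity, however, is exactly the content of $\M_2 = \Delta_3$, so no obstacle arises.
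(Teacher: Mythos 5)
Your proposal is correct and follows essentially the same route as the paper: factor the parameterization over $\Theta_{ij}$ into a $\M_2$-factor on indices $i,j$ tensored with $(\Delta_1)^{n-2}$ (using that rank-one stochastic matrices have equal rows), then invoke $\M_2=\Delta_3$. The paper asserts the resulting isomorphism as "clear," whereas you make the bijection explicit via marginalization; this is a welcome elaboration but not a different argument.
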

\begin{proof}
	A $2\times 2$ stochastic matrix has rank one if and only if both of its rows are equal. 
Therefore, points in the image of $\Theta_{ij}$ are of the form 
$$p_{k_1\cdots k_n}\;=\;(\lambda_1a_{1k_i}^{(i)}a_{1k_j}^{(j)}+\lambda_2a_{2k_i}^{(i)}a_{2k_j}^{(j)})\prod_{l\neq i,j}a_{1k_l}^{(l)}.$$ 
It is clear that $\phi_n(\Theta_{ij})$ is a subset of $\Delta_{2^n-1}$ 
isomorphic to $\M_2 \times(\Delta_1)^{n-2}$. The equality follows because $\M_2=\Delta_3$.
\end{proof}

\begin{lemma} \label{lem-singdegenerate} The parametrization $\phi_n$
  maps $\Theta_{ij} \cap \{a_{st}^{(k)}=0\}$ for $k \neq i,j$ and
  $s,t\in\{1,2\}$ to the degenerate part of the boundary of
  $\Delta_{2^n-1}$.
\end{lemma}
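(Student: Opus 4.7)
The plan is to combine the explicit form of $\phi_n$ on $\Theta_{ij}$ derived in the proof of Lemma \ref{lem-singdim} with the observation that a rank-one stochastic $2\times 2$ matrix has identical rows.

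First I would recall from the proof of the previous lemma that for $\theta \in \Theta_{ij}$, the image satisfies
$$p_{k_1 \cdots k_n} \;=\; \bigl(\lambda_1 a_{1 k_i}^{(i)} a_{1 k_j}^{(j)} + \lambda_2 a_{2 k_i}^{(i)} a_{2 k_j}^{(j)}\bigr) \prod_{l \neq i, j} a_{1 k_l}^{(l)}.$$
Since $A^{(k)}$ has rank one for every $k \neq i,j$, its two rows coincide, so $a_{1c}^{(k)} = a_{2c}^{(k)}$ for $c = 1,2$. In particular, the additional hypothesis $a_{st}^{(k)} = 0$ forces $a_{1t}^{(k)} = a_{2t}^{(k)} = 0$, independent of the choice of $s$.

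The next step is to read off the vanishing pattern of $P = \phi_n(\theta)$. The factor $a_{1 k_k}^{(k)}$ appears in the product $\prod_{l \neq i,j} a_{1 k_l}^{(l)}$ (because $k \neq i,j$), and it equals $a_{1t}^{(k)} = 0$ whenever the coordinate $k_k$ takes the value $t$. Hence $p_{k_1 \cdots k_n} = 0$ for every multi-index with $k_k = t$, regardless of the remaining entries.

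Finally, I would match this to the definition of the degenerate part of $\Delta_{2^n - 1}$. Because $n \geq 3$, there exists a position $k' \neq k$; pick any such $k'$ and any value $s' \in \{1,2\}$. Then every entry of $P$ whose $k'$-th coordinate equals $s'$ and whose $k$-th coordinate equals $t$ vanishes (it is a special case of the stronger vanishing just established), which is exactly the defining condition for $P$ to be degenerate. Since $P \in \Delta_{2^n - 1}$ has at least one zero coordinate, it also lies on the topological boundary of the simplex, concluding the argument. The only real care required is notational bookkeeping between the matrix-index $k$, the row-column indices $s,t$, and the tensor coordinates $k_l$; no substantive obstacle is anticipated.
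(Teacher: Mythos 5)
Your proof is correct and follows essentially the same route as the paper: both arguments use that a rank-one $2\times 2$ stochastic matrix has equal rows, so the single zero $a_{st}^{(k)}=0$ forces the entire $t$-column of $A^{(k)}$ to vanish, making the slice $k_k=t$ of the image tensor identically zero and hence the tensor degenerate. Your explicit check that a vanishing slice satisfies the two-index definition of degeneracy is a detail the paper leaves implicit, but it is the same argument.
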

\begin{proof} Consider the case $a_{11}^{(k)}=0$. Then
  $a_{12}^{(k)}=1$, and since $A^{(k)}$ has rank one we conclude that
  $a_{21}^{(k)} =0$ and $a_{22}^{(k)} = 1$. This means that the first
  slice of the image tensor along dimension $k$ is identically zero.
  Similar reasoning applies for all $a_{st}^{(k)} = 0$.
\end{proof}
Below we consider the intersection of
various subsets of the boundary of $\Theta$ with pieces of the
singular locus. Motivated by the last lemma, we denote
$\Theta_{ij} \cap \mathrm{int}(\Theta)$ by $\Theta_{ij}^\circ$. We
also let $\Theta_j^\circ = \Theta_j \cap \mathrm{int}(\Theta)$.

\subsection{Main Theorem}

We now state our main theorem.
\begin{theorem} \label{thm-main1} For $ n \leq k \leq 2n+1$, the
  $k$-dimensional strata of the nondegenerate part of $\M_{n}$ are in
  bijection with the $k-(n+1)$-dimensional faces of the cube $C_n$,
  except for $k=2n-1$ when $n$ additional strata are present, and for $k= n+1$
  when ${n \choose 2}$ additional strata are present. More precisely, the stratification of $\M_n$ has five types of strata: 
\begin{enumerate}

\item The interior of $\M_{n}$. This strata has dimension $2n+1$ and each point is
  the image under $\phi_n$ of a nonsingular point in the interior of
  $\Theta$.

\item Non-exceptional strata of dimension $n+1\leq k\leq 2n$. Except for $k=2n-1$, each $k$-dimensional 
stratum is the image of points in 
$$   \left( \bigcap_{s_i: i \in I} \{a_{1s_i}^{(i)}=0\} \right) \,\, \bigcup \,\, \left( \bigcap_{s_i: i \in I} \{a_{2s_i}^{(i)}=0\} \right),$$ 
where $|I| = 2n+1-k$. For $k=2n-1$, a stratum corresponding to a codimension two face of
$C_n$ is the image of points in
$$  \{a_{1s}^{(i)}=0\} \cap \{a_{1 t}^{(j)}=0\}  \, \bigcup \, \{a_{2s}^{(i)}=0\} \cap \{a_{2 t}^{(j)}=0\} \, \bigcup \, \Theta_{ij}^\circ ,$$
for $1 \leq i < j \leq n $ and $s,t=1,2$.

\item Exceptional strata of dimension $2n-1$. These are $n$ additional strata given as the image of points in
$$ \{a_{11}^{(i)} =0\} \cap \{a_{22}^{(i)} =0\} \bigcup  \{a_{12}^{(i)} =0\} \cap \{a_{21}^{(i)} =0\},$$
for $i=1,\ldots, n$.

\item Exceptional strata of dimension $n+1$. These are ${n \choose 2}$ additional strata given as the image of points
  in $\Theta_{ij}^\circ$ for $1 \leq i < j \leq n$.
  
\item A single $n$-dimensional stratum corresponding to the
  empty face of $C_n$ given by the image of points in
  $\Theta_{\lambda_1\lambda_2}$.
\end{enumerate}
\end{theorem}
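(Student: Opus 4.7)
The plan is to build the stratification from the codimension-one decomposition of Proposition \ref{prop-boundary}, forming lower-dimensional strata by intersecting codimension-one pieces and then adjusting for the singular locus identified in Proposition \ref{prop-sing}. The $2n$ codimension-one boundary components are $\phi_n(\{a_{1s}^{(i)}=0\}) = \phi_n(\{a_{2s}^{(i)}=0\})$ for $i=1,\ldots,n$ and $s=1,2$, the equality reflecting the label-swap symmetry on the latent variable. A candidate $k$-dimensional stratum is indexed by a choice of $2n+1-k$ such constraints at distinct indices $i \in I$, and this combinatorial datum corresponds exactly to a face of $C_n$ of dimension $k-(n+1)$, since each coordinate direction of $C_n$ records a sign $s_i \in \{1,2\}$. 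The empty face corresponds to imposing constraints at all $n$ indices, which after quotienting by label swap yields the $n$-dimensional stratum from $\Theta_{\lambda_1\lambda_2}$ in item (5).

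For each nonempty face of $C_n$, I would verify that $\phi_n$ restricted to $\bigcap_{i\in I}\{a_{1s_i}^{(i)}=0\}$ has generically full-rank Jacobian, so that its image has the expected dimension $k=2n+1-|I|$, and that the image consists of nondegenerate tensors by Lemma \ref{lem-singdegenerate}. Away from the singular locus of Proposition \ref{prop-sing}, the map is two-to-one by label swap, which is precisely why the top-row and bottom-row branches fuse into the single stratum described in item (2).

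The exceptional strata arise from $\Theta_{ij}$ and from a special configuration at a single index. By Lemma \ref{lem-singdim}, $\phi_n(\Theta_{ij}^\circ)$ has dimension $n+1$ rather than the expected $2n-1$, and this shortfall produces the ${n \choose 2}$ exceptional strata of item (4); simultaneously it forces item (2) at $k=2n-1$ to absorb $\phi_n(\Theta_{ij}^\circ)$ into the closure of the generic codimension-two stratum indexed by the face $\{a_{1s}^{(i)}=0\} \cap \{a_{1t}^{(j)}=0\}$. For item (3), the vanishing pattern $\{a_{11}^{(i)}=0\} \cap \{a_{22}^{(i)}=0\}$ uses two codimension-one components at the \emph{same} index $i$ and therefore does not correspond to a face of $C_n$; it fixes $A^{(i)}$ to be the anti-diagonal permutation matrix, which is still rank two, so the image lies outside the singular strata. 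A direct parameter count on the remaining coordinates then confirms that the image has dimension $2n-1$ and is genuinely distinct from the face-type strata.

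The main obstacle will be completeness: showing that every nondegenerate boundary point arises from exactly one of the listed parameter sets and that distinct sets of the same dimension give disjoint images. Away from the singular locus, generic identifiability (invoked in the proof of Proposition \ref{prop-boundary}) and Kruskal's theorem (used in the proof of Proposition \ref{prop-sing}) give a two-element fiber related only by label swap, so identifications are tightly controlled. On the singular locus the fibers are larger, and one must check by direct manipulation of the parameterization that: (i) $\phi_n(\Theta_{ij}^\circ)$ really lies in the closure of the generic $(2n-1)$-dimensional face stratum rather than contributing a separate codimension-two piece, (ii) distinct $\Theta_{ij}^\circ$ and $\Theta_{i'j'}^\circ$ give disjoint $(n+1)$-dimensional images, and (iii) the exceptional $(2n-1)$-dimensional strata of item (3) are not contained in any face-type image. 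The gluing argument in (i), verifying via an explicit limit of parameters that $\phi_n(\Theta_{ij}^\circ)$ sits inside the closure of the generic $(2n-1)$-dimensional piece, is what I expect to be the most delicate step.
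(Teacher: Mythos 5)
Your overall architecture matches the paper's: start from the $2n$ codimension-one components $\Gamma_{is}$ of Proposition \ref{prop-boundary}, form lower strata as intersections, use Proposition \ref{prop-sing} and Lemma \ref{lem-singdim} to account for the exceptional pieces coming from the singular locus, and organize the result by the face poset of $C_n$. You also identify all five types and their parameter descriptions correctly. The genuine gap is exactly the step you defer under the name ``completeness.'' The paper resolves it by computing the \emph{exact} preimage $\phi_n^{-1}(\Gamma_{is})$ (Proposition \ref{prop-fiber}), namely $\{a_{1s}^{(i)}=0\}\cup\{a_{2s}^{(i)}=0\}\cup\Theta_{\lambda_1\lambda_2}\cup\bigcup_{k\neq i}\Theta_{ik}^\circ$, and the nontrivial direction --- that nothing else maps into $\Gamma_{is}$ --- is proved by marginalizing to a three-way table (Lemma \ref{lem:marg2}) and observing that the defining binomial $q_{s11}q_{s22}=q_{s12}q_{s21}$ factors as $\lambda_1\lambda_2\, a_{1s}^{(i)}a_{2s}^{(i)}\det(A^{(j)})\det(A^{(k)})=0$. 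Your proposed substitute --- generic identifiability plus Kruskal's theorem controlling fibers --- does not do this job: those results bound the fiber of a single nonsingular \emph{point}, whereas what is needed is to decide which parameters satisfy the \emph{equations} cutting out the variety $\Gamma_{is}$; moreover, the extra components of the preimage ($\Theta_{ik}^\circ$ and $\Theta_{\lambda_1\lambda_2}$) live precisely on the singular locus where identifiability fails. Once the preimages are known exactly, your checks (i)--(iii) reduce to routine intersections of the listed sets (Lemmas \ref{lem-2inter}--\ref{lem-excp2}), with the ``relevant part'' bookkeeping discarding parameters mapping to degenerate or rank-one tensors. In particular no limit argument is needed for your point (i): one verifies directly that the image of $\Theta_{ij}^\circ$ satisfies the binomials defining $\Gamma_{is}\cap\Gamma_{jt}$ for all four choices of $(s,t)$, which is also why it is simultaneously the type-(4) stratum $\Gamma_{i1}\cap\Gamma_{i2}\cap\Gamma_{j1}\cap\Gamma_{j2}$ rather than merely a piece of the closure of one codimension-two stratum.

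A smaller slip in your combinatorial bookkeeping: the $n$-dimensional stratum of type (5) is not obtained by ``imposing constraints at all $n$ indices and quotienting by label swap.'' One constraint per index yields the $2^n$ vertex strata of dimension $n+1$, and label swapping only identifies the two branches $\bigcap_i\{a_{1s_i}^{(i)}=0\}$ and $\bigcap_i\{a_{2s_i}^{(i)}=0\}$ of a \emph{single} vertex stratum; it does not collapse distinct vertices. The empty-face stratum is instead the intersection of \emph{all} $2n$ components $\Gamma_{is}$, which the paper identifies with the nondegenerate rank-one tensors, i.e.\ the image of $\Theta_{\lambda_1\lambda_2}$ (Proposition \ref{prop-all} and Corollary \ref{cor-all}).
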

\begin{corollary} \label{cor:number_of_strata} Let $n \leq k \leq
  2n+1$ with $k = 2n+1 - \ell$. Then the number of nondegenerate
  $k$-dimensional strata of $\M_{n}$ is
$$ \left\{ 
\begin{array}{ll} \displaystyle {n \choose \ell} 2^\ell  &\qquad  \ell \neq 2,n, n+1 \\[12pt] 
\displaystyle {n \choose 2}4 + n &\qquad    \ell = 2\\[12pt] 
\displaystyle 2^n + {n \choose 2} & \qquad \ell = n \\[12pt] 
1 & \qquad  \ell = n+1. 
\end{array} \right.$$ 
\end{corollary}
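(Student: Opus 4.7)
The plan is to read off the five classes of strata from Theorem~\ref{thm-main1} and count them class by class, indexed by $\ell = 2n+1-k$. The only real content is to match the parameters $(I, \{s_i\}_{i\in I})$ appearing in Theorem~\ref{thm-main1}(2) with the faces of the cube $C_n$ of the appropriate dimension, and then add in the two families of exceptional strata.

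First I would dispose of the easy extremes: $\ell=n+1$ (i.e., $k=n$) is exactly the single stratum in Theorem~\ref{thm-main1}(5), which gives the value $1$. The generic range $1 \leq \ell \leq n$ with $\ell \notin \{2,n\}$ involves only the non-exceptional strata of type~(2). Here a $k$-dimensional stratum is determined by a subset $I \subset \{1,\dots,n\}$ with $|I|=\ell$ together with a choice $s_i\in\{1,2\}$ for each $i\in I$; the two options $a^{(i)}_{1s_i}=0$ and $a^{(i)}_{2s_i}=0$ do not double-count because they are unified into a single stratum in Theorem~\ref{thm-main1}(2) via label swapping. Counting gives $\binom{n}{\ell}\cdot 2^{\ell}$, which matches the count $\binom{n}{n-\ell}2^{n-(n-\ell)}$ of $(n-\ell)$-dimensional faces of $C_n$, as required.

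For the two exceptional values of $\ell$, I would just add the extra strata described in parts~(3) and~(4) of Theorem~\ref{thm-main1} to the non-exceptional count. When $\ell=2$ (so $k=2n-1$), part~(3) supplies $n$ additional strata (one for each coordinate $i$), giving a total of $\binom{n}{2}\cdot 4 + n$. When $\ell=n$ (so $k=n+1$), part~(4) supplies one additional stratum for each unordered pair $\{i,j\}$, giving $2^n+\binom{n}{2}$.

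The only step that requires any care is confirming that the enumeration of non-exceptional strata is not over- or under-counted, which amounts to observing that Theorem~\ref{thm-main1}(2) already packages each face of $C_n$ as a single stratum (the union over the two sign choices $a^{(i)}_{1s_i}=0$ and $a^{(i)}_{2s_i}=0$), and that the exceptional strata of parts~(3) and~(4) are genuinely new by the phrasing of the theorem. Since Theorem~\ref{thm-main1} states that the strata of dimension $k$ with $n \leq k \leq 2n+1$ are exhausted by the five listed types, summing the contributions yields the stated piecewise formula.
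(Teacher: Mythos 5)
Your proposal is correct and follows the same route as the paper, which gives no separate proof of this corollary: the counts are read off directly from Theorem~\ref{thm-main1} (and the enumeration corollaries \ref{cor-2inter}, \ref{cor-excp1}, \ref{cor-many}, \ref{cor-excp2}, \ref{cor-all} established en route to it), with the exceptional strata of types (3) and (4) added to the non-exceptional counts at $\ell=2$ and $\ell=n$. Your observation that the two sign choices $u=1,2$ are packaged into a single stratum, so no double-counting occurs, is exactly the point that makes the face-of-$C_n$ bijection give $\binom{n}{\ell}2^{\ell}$.
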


We prove Theorem \ref{thm-main1} at the end of
this section, after making a few comments about the stratification.  
As a general rule, the set of probability distributions contained in a single stratum does not
allow a clean and simple interpretation. 
In a few cases, however, we \emph{do} observe
nice patterns, and we describe these below. \\

\vskip 0.1cm
(a) \emph{Codimension one strata}. The $2n$ codimension one
strata have a simple recursive description. For example, if
$a^{(1)}_{11} = 0$ then $a^{(1)}_{12}=1$, and the slice
$(p_{1j_2\cdots j_n})$ of the tensor $P$ is a binary tensor of rank
one. This corresponds to the context specific independence model where
$X_2, \ldots, X_n$ are independent conditionally on $\{X_1 = 1\}$. It
is described in the probability simplex $\Delta_{2^{n-1}-1}$ by the
binomial equations
\begin{equation}\label{eq:Gamma}
  p_{1i_2\cdots i_n}p_{1j_2\cdots j_n}-p_{1k_2\cdots k_n}p_{1l_2\cdots l_n}\;=\;0\quad \mbox{for  }\{i_s,j_s\}=\{k_s,l_s\} \mbox{ and } s=2,\ldots,n.	
\end{equation}
The other slice $(p_{2j_2 \cdots j_n})$, after normalization, is a
tensor from the model $\M_{n-1}$. Hence, knowing the description of
$\M_{n-1}$ helps describe the codimension one strata of $\M_n$. \\

\vskip 0.1cm
(b) \emph{The exceptional codimension two strata (type
  (3))}.  If $A^{(1)}$ is the identity matrix, then the
parameterization in (\ref{eq:paramMn}) specializes to
$$
p_{1j_2\cdots j_n} = \lambda_1 a_{1j_2}^{(2)}\cdots a_{1j_n}^{(n)},\qquad p_{2j_2\cdots j_n} = \lambda_2 a_{2j_2}^{(2)}\cdots a_{2j_n}^{(n)}.
$$
Since $A^{(2)},\ldots, A^{(n)}$ are arbitrary stochastic matrices,
the first stratum of type (3) corresponds to the 
model where $X_2,\ldots,X_n$ are independent conditionally on
$X_1$. This is a graphical model given by the graph in Figure
\ref{fig:star2}. This model is fully described in the probability
simplex $\Delta_{2^n-1}$ by the binomial equations
$$
p_{ii_2\cdots i_n}p_{ij_2\cdots j_n}-p_{ik_2\cdots k_n}p_{il_2\cdots
  l_n}\;=\;0\quad \mbox{for $i=1,2$, }\{i_s,j_s\}=\{k_s,l_s\}
\mbox{ and } s=2,\ldots,n$$
with no additional inequalities. The analysis is analogous for the $n-1$ remaining strata given by one of $A^{(2)},\ldots, A^{(n)}$ being the identity matrix. \\

\vskip 0.1cm
(c) \emph{The $n$-dimensional stratum (type(5))}. This
unique stratum is given by all rank one tensors in
$\Delta_{2^n-1}$. This stratum is defined by the equations
$$
p_{i_1i_2\cdots i_n}p_{j_1j_2\cdots j_n}-p_{k_1k_2\cdots k_n}p_{l_1l_2\cdots l_n}\;=\;0\quad \mbox{if }\{i_s,j_s\}=\{k_s,l_s\} \mbox{ for } s=1,\ldots,n$$ 
and it corresponds to the full independence model.

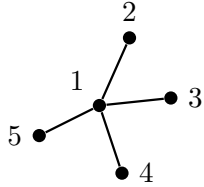
\begin{figure}[htp]
\tikzstyle{vertex}=[circle,fill=black,minimum size=5pt,inner sep=0pt]
\tikzstyle{hidden}=[circle,draw,minimum size=5pt,inner sep=0pt]
  \begin{tikzpicture}
    \node[vertex] (2) at (-.8,-.4)  [label=left:$5$] {};
    \node[vertex] (3) at (.4,.9) [label=above:$2$]{};
    \node[vertex] (4) at (.95,.1) [label=right:$3$]{};
    \node[vertex] (5) at (.3,-0.9) [label=right:$4$]{};
    \node[vertex] (a) at (0,0) [label=above left:$1$]{};
    \draw[line width=.3mm] (a) to (2);
    \draw[line width=.3mm] (a) to (3);
        \draw[line width=.3mm] (a) to (4);
    \draw[line width=.3mm] (a) to (5);
  \end{tikzpicture}  \caption{The graph representing the strata given by $a^{(1)}_{12}=a^{(1)}_{21}=0$. }\label{fig:star2}
\end{figure}

The strata of $\M_n$ form a partially ordered set where for two strata
$S,S'$ we have $S\preceq S'$ if the closure of $S$ is contained in the
closure of $S'$.  Such a partially ordered set structure becomes important in Section \ref{sec:MLE} to provide further insights into the geometry of the maximum likelihood estimation. Suppose that $p^*$ is a maximizer of a function $f$ over the (Zariski) closure of a set $S'$. If $S$ is another set such that $S\preceq S'$ then the value of $f$ in $S$ is bounded above by $f(p^*)$. In particular, if $p^*$ lies in $S'$ then to maximize $f$ over $\M_n$ there is no need to check strata $S$ such that  $S\preceq S'$.
 
The interior of $\M_n$ is the unique
maximal element, and the unique strata of type (5) is the unique
minimal element.
For example, for $\M_3$ there are six dimension 6 strata, which we label by $\{1,2,3,4,5,6\}$ corresponding to equations \\
$$(1)\;\;\;p_{111} p_{122} = p_{112}p_{121}\qquad (2)\;\;\;p_{211} p_{222} = p_{212} p_{221}$$ 
$$(3)\;\;\;p_{111} p_{212} = p_{112} p_{211}\qquad (4)\;\;\; p_{121} p_{222} = p_{122} p_{221} $$ 
$$(5)\;\;\;p_{111} p_{221} = p_{121} p_{211}\qquad (6)\;\;\; p_{112} p_{222} = p_{122} p_{212}$$ 
respectively. These six equations are naturally grouped in pairs as indicated by the three rows above. 
Each of these three pairs defines one of the three special strata of type (3). 
In general, each special stratum of this kind is obtained as the intersection~of codimension one strata which correspond 
to ``opposite'' facets of $C_n$. For $\M_3$, each special stratum of type (4) is  defined by four equations found in two rows of the six equations above. 
If one ignores these special strata, the poset is isomorphic 
to the face poset of the cube $C_n$. 
The Hasse diagram of the poset for $\M_3$ is given in Figure \ref{fig:poset}. 
\begin{figure}[htp!]
\begin{tikzpicture}[scale=.7]
  \node (max) at (0,11) {$\emptyset$};
  \node (1) at (-5,10) {$1$};
  \node (2) at (-3,10) {$2$};
  \node (3) at (-1,10) {$3$};
  \node (4) at (1,10) {$4$};
  \node (5) at (3,10) {$5$};
  \node (6) at (5,10) {$6$};
  \node (12) at (-10.5,8) {$\textcolor{red}{12}$};
  \node (13) at (-9,8) {$13$};
  \node (14) at (-7.5,8) {$14$};
  \node (15) at (-6,8) {$15$};
  \node (16) at (-4.5,8) {$16$};
  \node (23) at (-3,8) {$23$};
  \node (24) at (-1.5,8) {$24$};
  \node (34) at (0,8) {$\textcolor{red}{34}$};
  \node (25) at (1.5,8) {$25$};
  \node (26) at (3,8) {$26$};
  \node (35) at (4.5,8) {$35$};
  \node (36) at (6,8) {$36$};
  \node (45) at (7.5,8) {$45$};
  \node (46) at (9,8) {$46$};
  \node (56) at (10.5,8) {$\textcolor{red}{56}$};
  \node (1234) at (-10,3) {$\textcolor{blue}{1234}$};
  \node (135) at (-8,3) {$135$};
  \node (136) at (-6,3) {$136$};
  \node (145) at (-4,3) {$145$};
  \node (146) at (-2,3) {$146$};
  \node (1256) at (0,3) {$\textcolor{blue}{1256}$};
  \node (235) at (2,3) {$235$};
  \node (236) at (4,3) {$236$};
  \node (245) at (6,3) {$245$};
  \node (246) at (8,3) {$246$};
  \node (3456) at (10,3) {$\textcolor{blue}{3456}$};
  \node (min) at (0,1) {$123456$};
  \draw (max) -- (1);
  \draw (max) -- (2);
  \draw (max) -- (3);
  \draw (max) -- (4);
  \draw (max) -- (5);
  \draw (max) -- (6);
  \draw (1) -- (12) -- (2);
  \draw (3) -- (34) -- (4);
  \draw (5) -- (56) -- (6);
  \draw (1) -- (13) -- (3);
  \draw (1) -- (14) -- (4);
  \draw (1) -- (15) -- (5);
  \draw (1) -- (16) -- (6);
  \draw (2) -- (23) -- (3);
  \draw (2) -- (24) -- (4);
  \draw (2) -- (25) -- (5);
  \draw (2) -- (26) -- (6);
  \draw (3) -- (35) -- (5);
  \draw (3) -- (36) -- (6);
  \draw (4) -- (45) -- (5);
  \draw (4) -- (46) -- (6);
  \draw (13) -- (135) -- (15) -- (135) -- (35);
  \draw (13) -- (136) -- (16) -- (136) -- (36);
  \draw (14) -- (145) -- (15) -- (145) -- (45);
  \draw (14) -- (146) -- (16) -- (146) -- (46);
  \draw (23) -- (235) -- (25) -- (235) -- (35);
  \draw (23) -- (236) -- (26) -- (236) -- (36);
  \draw (24) -- (245) -- (25) -- (245) -- (45);
  \draw (24) -- (246) -- (26) -- (246) -- (46);
  \draw (12) -- (1234) -- (13) -- (1234) -- (14) -- (1234) -- (23) -- (1234) -- (24) -- (1234) -- (34);
  \draw (12) -- (1256) -- (15) -- (1256) -- (16) -- (1256) -- (25) -- (1256) -- (26) -- (1256) -- (56);
  \draw (34) -- (3456) -- (35) -- (3456) -- (36) -- (3456) -- (45) -- (3456) -- (46) -- (3456) -- (56);
  \draw (135) -- (min);
  \draw (136) -- (min);
  \draw (145) -- (min);
  \draw (146) -- (min);
  \draw (235) -- (min);
  \draw (236) -- (min);
  \draw (245) -- (min);
  \draw (246) -- (min);
  \draw (1234) -- (min);
  \draw (1256) -- (min);
  \draw (3456) -- (min);
\end{tikzpicture}
\caption{The boundary stratification poset of $\M_3$. The red and blue nodes correspond to strata of type (3)  and 
of type (4), respectively.}\label{fig:poset}
\end{figure}
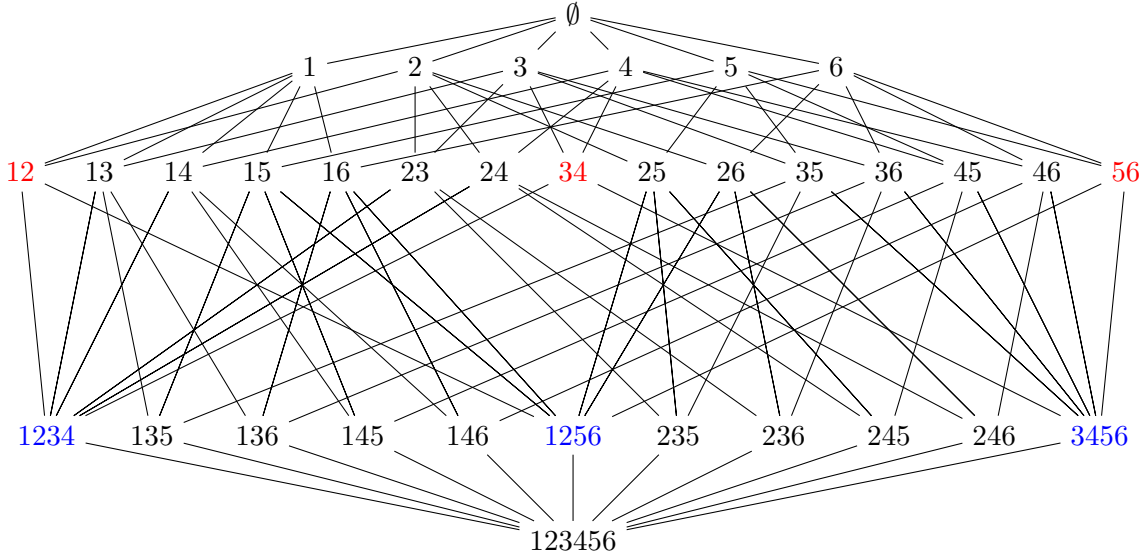

We now turn to the proof of Theorem \ref{thm-main1}. The result will
follow from a sequence of lemmas.  By Proposition \ref{prop-boundary}
there are exactly $2n$ strata of codimension one, each consisting of
tensors where in one slice along a given dimension the subtensor has
rank at most one. In other words, each stratum is described by a
collection of equations of the form (\ref{eq:Gamma}) together with the
inequalities forcing supermodularity. We denote these strata by
$\Gamma_{is}$ where $i =1, \ldots, n$ and $s=1,2$.

We first formulate a lemma that shows that boundary points are mapped
to boundary points under the marginalization $P\mapsto P_B$
(c.f. Lemma \ref{lem:marg}).
\begin{lemma}\label{lem:marg2}
Suppose that $n\geq 4$ and let $B \subset \{1, \ldots, n\}$ with $|B| = m \geq 3$. For $i \in B$,  
if  a point $P$ in $\M_{n}$ lies on $\Gamma_{is}$, then $P_B$ lies in the corresponding stratum $\Gamma_{is}$ 
of the marginal model $\M_{m}$.
\end{lemma}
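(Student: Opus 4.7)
The plan is to combine the parametric description of the codimension-one stratum $\Gamma_{is}$ given in Proposition~\ref{prop-boundary} with the marginalization formula in Lemma~\ref{lem:marg}. The upshot will be that the property ``$P$ lies on $\Gamma_{is}$'' is witnessed at the parameter level by the vanishing of a single entry of $A^{(i)}$, and this vanishing is preserved by the restriction operation that computes $P_B$.

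First I would use Proposition~\ref{prop-boundary} to select a preimage $\theta = (\boldsymbol\lambda, A^{(1)}, \ldots, A^{(n)}) \in \Theta$ of $P$ with $a^{(i)}_{1s} = 0$ (by the label-swap on the latent variable one can always normalize to this form, since $\Gamma_{is}$ is the image of the locus $\{a^{(i)}_{1s}=0\}$ under $\phi_n$ up to this swap). By Lemma~\ref{lem:marg}, the marginal $P_B$ equals $\phi_m(\theta|_B)$, where $\theta|_B = (\boldsymbol\lambda, (A^{(j)})_{j\in B})$. Since $i\in B$ by hypothesis, the matrix $A^{(i)}$ appears unchanged in $\theta|_B$, and in particular the condition $a^{(i)}_{1s}=0$ still holds. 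Invoking Proposition~\ref{prop-boundary} now for the smaller model $\M_m$ shows that $\theta|_B$ maps into the stratum $\Gamma_{is}$ of $\M_m$, and hence $P_B$ lies on $\Gamma_{is}$ as a point of $\M_m$.

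I do not expect any substantive obstacle; the lemma is essentially a bookkeeping corollary of Lemma~\ref{lem:marg} and Proposition~\ref{prop-boundary}. The only mildly delicate point is the label-swap ambiguity at the parameter level, but this is harmless because the swap acts uniformly on $\boldsymbol\lambda$ and on all rows of every $A^{(j)}$, and therefore commutes with the restriction to coordinates in $B$. As a sanity check one can also argue intrinsically, without picking parameters: the defining condition for $\Gamma_{is}$ is that the slice $(p_{j_1\cdots s\cdots j_n})$ of $P$ at the $i$-th index equal to $s$ is a nonnegative rank-one tensor; marginalizing such a slice over the indices in $\{1,\ldots,n\}\setminus B$ amounts to contracting the corresponding factors against all-ones vectors, which preserves rank one, and the resulting tensor is exactly the $s$-slice along the $i$-th coordinate of $P_B$. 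Either route yields $P_B\in\Gamma_{is}$ in $\M_m$.
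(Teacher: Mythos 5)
Your proposal is correct and matches the paper's argument: the paper likewise selects a parameter preimage, applies Lemma~\ref{lem:marg} to identify $P_B$ as the image of the restricted parameters $(\boldsymbol\lambda, A^{(k)}: k\in B)$, and concludes that the defining property of $\Gamma_{is}$ (the rank-one $s$-slice in direction $i$) survives marginalization. Your ``intrinsic'' sanity check is essentially the phrasing the paper uses, and your parameter-level version via $a^{(i)}_{1s}=0$ is an equivalent bookkeeping of the same fact.
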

\begin{proof}
  If $P$ is the image of $(\lambda_1, \lambda_2, A^{(k)} \, : \,
  k=1,\ldots,n)$, by Lemma \ref{lem:marg}, $P_B$ is the image of
  $(\lambda_1, \lambda_2, A^{(k)} \, : \, k \in B)$. Hence if the
  slice $s$ in dimension $i$ of $P$ has rank one, so will the slice
  $s$ in dimension $i$ of $P_B$.
\end{proof}

\begin{proposition} \label{prop-fiber} The preimage of the codimension
  one stratum $\Gamma_{is}$ under $\phi_n$ is
$$\phi_n^{-1}(\Gamma_{is}) \, = \, \{a_{1s}^{(i)}=0\} \cup \{a_{2s}^{(i)}=0\} \cup \Theta_{\lambda_1\lambda_2} \cup \bigcup_{k \neq i} \Theta_{ik}^\circ.$$
\end{proposition}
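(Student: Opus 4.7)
The plan is to characterize the preimage of $\Gamma_{is}$ by analyzing directly when the slice of $\phi_n(\theta)$ obtained by fixing $j_i=s$ has rank at most one as an $(n-1)$-tensor, since this is the defining condition for $\Gamma_{is}$. Writing the slice using the parameterization (\ref{eq:paramMn}) one obtains
$$ S \;=\; \alpha\bigotimes_{l\neq i} a_{1\cdot}^{(l)} \;+\; \beta\bigotimes_{l\neq i} a_{2\cdot}^{(l)}, \qquad \alpha:=\lambda_1 a_{1s}^{(i)}, \quad \beta:=\lambda_2 a_{2s}^{(i)}, $$
so $S$ is a sum of two rank-at-most-one tensors with nonnegative coefficients, and the analysis naturally splits on whether $\alpha\beta=0$.

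First I would dispose of the degenerate-coefficient case $\alpha\beta=0$: here only one summand survives, so $S$ has rank at most one automatically. Since all parameters are nonnegative, this condition unpacks to $\lambda_1\lambda_2=0$, $a_{1s}^{(i)}=0$, or $a_{2s}^{(i)}=0$, contributing the three explicit pieces $\Theta_{\lambda_1\lambda_2}$, $\{a_{1s}^{(i)}=0\}$, and $\{a_{2s}^{(i)}=0\}$ to the claimed union.

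The substantive case is $\alpha,\beta>0$, and the key technical step is the following rank-one-sum lemma: two nonzero pure tensors $T_1=\bigotimes_l u_l$ and $T_2=\bigotimes_l v_l$ (indexed by $l\neq i$) satisfy $\rank(\alpha T_1+\beta T_2)\leq 1$ iff $u_l$ is proportional to $v_l$ for all but at most one index $l$. I would prove the nontrivial direction by contrapositive via marginalization: if two indices $k_1,k_2$ each violate proportionality, then summing $\alpha T_1+\beta T_2$ over all remaining indices (which preserves the upper bound on rank, since by stochasticity each $u_l$ and $v_l$ has total mass one) yields the $2\times 2$ matrix $\alpha u_{k_1} u_{k_2}^{T} + \beta v_{k_1} v_{k_2}^{T}$, which has rank two and contradicts $\rank(S)\leq 1$. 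The converse direction is an explicit factorization of $\alpha T_1+\beta T_2$ as a pure tensor along the remaining unique dimension.

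Applying this lemma with $u_l=a_{1\cdot}^{(l)}$ and $v_l=a_{2\cdot}^{(l)}$, and using that two probability vectors are proportional iff they are equal, one concludes that $\rank(S)\leq 1$ iff at most one index $k\neq i$ has $\rank(A^{(k)})=2$, which is exactly the condition $\theta\in\bigcup_{k\neq i}\Theta_{ik}$. Combining with the first case delivers the stated preimage description; the interior qualifier $\Theta_{ik}^\circ$ captures the generic portion of each $\Theta_{ik}$, while any additional boundary points of $\Theta$ lying in $\Theta_{ik}$ are either already absorbed by one of the three explicit linear conditions above or else map to strictly lower-dimensional strata of $\M_n$. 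The main obstacle is the rank-one-sum lemma; the remainder amounts to unpacking the parameterization.
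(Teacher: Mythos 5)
Your proposal is correct and is essentially the paper's argument in a different packaging: your rank-one-sum lemma, proved by marginalizing two non-proportional modes down to the $2\times 2$ matrix $\alpha u_{k_1}u_{k_2}^{T}+\beta v_{k_1}v_{k_2}^{T}$, is exactly the paper's key computation (its identity $\lambda_1\lambda_2 a_{1s}^{(i)}a_{2s}^{(i)}\det(A^{(j)})\det(A^{(k)})=0$ obtained from the three-way marginal over $\{i,j,k\}$), with $\alpha=\lambda_1 a_{1s}^{(i)}$ and $\beta=\lambda_2 a_{2s}^{(i)}$. The only difference is organizational (a single if-and-only-if for the slice versus two separate inclusions), and your brief hand-wave about boundary points of $\Theta$ lying in $\Theta_{ik}\setminus\Theta_{ik}^{\circ}$ matches the paper's own reliance on its nondegeneracy conventions at that step.
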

\begin{proof}
  We first show that $$\{a_{1s}^{(i)}=0\} \cup \{a_{2s}^{(i)}=0\} \cup
  \Theta_{\lambda_1\lambda_2} \cup \bigcup_{k \neq i}
  \Theta_{ik}^\circ \;\;\subset\;\; \phi_n^{-1}(\Gamma_{is}). $$
  Clearly $\{a_{1s}^{(i)}=0\} \cup \{a_{2s}^{(i)}=0\} \cup \Theta_{\lambda_1\lambda_2}$
  lies in the preimage. To show that the preimage contains also
  $\Theta_{ik}^\circ$ for each $k\neq i$, note that the image of a
  point in $\Theta_{ik}^\circ$ is given by
  $$p_{j_1 \cdots j_i \cdots j_k \cdots j_n} = (\lambda_1 a_{1j_i}^{(i)} a_{1j_k}^{(k)}+ \lambda_2 a_{2j_i}^{(i)} a_{2j_k}^{(k)}) \prod_{l \neq i,k} a_{1j_l}^{(l)}. $$
The points on $\Gamma_{is}$ must satisfy $p_{i_1 \cdots s \cdots i_n}
p_{j_1 \cdots s \cdots j_n} = p_{\nu_1 \cdots s \cdots \nu_n} p_{\mu_1
  \cdots s \cdots \mu_n}$ for $\{i_t, j_t \} = \{\nu_t, \mu_t\}$ where
$1 \leq t \neq i \leq n$. The above point satisfies such equations
since
\begin{eqnarray*}
(\lambda_1a_{1s}^{(i)}a_{1i_k}^{(k)}+\lambda_2a_{2s}^{(i)}a_{2i_k}^{(k)})(\lambda_1a_{1s}^{(i)}a_{1j_k}^{(k)}+\lambda_2a_{2s}^{(i)}a_{2j_k}^{(k)})=\\[.2cm]
(\lambda_1a_{1s}^{(i)}a_{1\nu_k}^{(k)}+\lambda_2a_{2s}^{(i)}a_{2\nu_k}^{(k)})(\lambda_1a_{1s}^{(i)}a_{1\mu_k}^{(k)}+\lambda_2a_{2s}^{(i)}a_{2\mu_k}^{(k)}).	
\end{eqnarray*}
Next we show that no other points lie in the preimage. To this end, from
now on suppose that $a_{1s}^{(i)}\neq 0$, $a_{2s}^{(i)}\neq 0$,
$\lambda_1\cdot \lambda_2\neq 0$ and the parameters are not in $\bigcup_{k \neq i}
\Theta_{ik}^\circ$. Hence we can assume that $P\in \Gamma_{is}$ is given by a parameter vector such that for some $j,k \neq i$ the matrices $A^{(j)},A^{(k)}$ have
rank $2$. Consider the marginal
distribution over $\{i,j,k\}$ and denote its coordinates by $q_{u_i
  u_j u_k}$, $u_i,u_j,u_k\in \{1,2\}$.  By Lemma \ref{lem:marg2}, it
is a point in $\M_3$ parameterized by $(\lambda_1,\lambda_2 , A^{(i)},
A^{(j)}, A^{(k)})$, and it satisfies
$q_{s11}q_{s22}=q_{s12}q_{s21}$. A quick computation shows that this
is equivalent to
\begin{equation}\label{eq:params}
	\lambda_1\lambda_2a_{1s}^{(i)}a_{2s}^{(i)}\det(A^{(j)})\det(A^{(k)})\;=\;0.
\end{equation} 
However, by our
assumption, this is impossible.
\end{proof}


Our strategy to prove Theorem \ref{thm-main1} is to intersect
preimages of codimension one strata $\Gamma_{is}$.  
By Proposition \ref{prop-fiber}, this means we must consider intersections
of subsets of the boundary of the parameter space $\Theta$ and
of various subsets of the singular locus of $\phi_n$ in the
interior of $\Theta$. When doing this, we disregard two
types of intersections. The first type consists of subsets of the
parameter space whose points map to the degenerate part of
$\Delta_{2^n-1}$. Since we are interested in nondegenerate points in
the intersections of $\Gamma_{is}$, these kinds of subsets are irrelevant. 
The second type
consists of subsets of the parameter space whose points map to tensors
of rank one. The next proposition justifies the irrelevance of these
subsets.

\begin{proposition} \label{prop-all}The intersection of all
  $\Gamma_{is}$ for $i=1, \ldots, n$ and $s=1,2$ contains all tensors
  of rank one.
\end{proposition}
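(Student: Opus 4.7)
The plan is to show that any rank-one tensor satisfies both the slice-rank equations and the supermodularity inequalities that cut out each stratum $\Gamma_{is}$, and moreover lies in the image of $\phi_n$, so that it belongs to $\M_n \cap \Gamma_{is}$ for every $i,s$.

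First I would write a rank-one nonnegative tensor as $p_{i_1\cdots i_n} = u_{1i_1} u_{2i_2}\cdots u_{ni_n}$, and observe that for any fixed index $i$ and value $s\in\{1,2\}$, the $s$-th slice along dimension $i$ is $(u_{is})\cdot\prod_{k\neq i} u_{k i_k}$, which is itself a rank-one $(n-1)$-dimensional tensor (up to the scalar $u_{is}$). Consequently the slice has rank at most one, so all its $3$-minors under any matrix flattening vanish; in particular the binomial equations (\ref{eq:Gamma}) defining $\Gamma_{is}$ hold for $P$.

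Next I would verify supermodularity, which is the remaining defining condition via Theorem~\ref{th:ARSZ}. For any indices with $\{i_s,j_s\}=\{k_s,\ell_s\}$ for all $s$, both products $p_{\mathbf i}p_{\mathbf j}$ and $p_{\mathbf k}p_{\mathbf\ell}$ expand to $\prod_s u_{si_s}u_{sj_s}=\prod_s u_{sk_s}u_{s\ell_s}$ as unordered products, so (\ref{eq:logsuper}) holds with equality. Thus $P$ is $\pi$-supermodular for every $\pi$, and in particular is supermodular.

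It remains to check $P\in\M_n$, which I would do by writing down an explicit parameterization placing $P$ in $\phi_n(\{a^{(i)}_{1s}=0\})$. After rescaling we may assume each $u_k=(u_{k1},u_{k2})$ is a probability vector. Take (for the case $i=1$, $s=1$) $\lambda_1=u_{12}$, $\lambda_2=u_{11}$, $A^{(1)}=\bigl(\begin{smallmatrix}0&1\\1&0\end{smallmatrix}\bigr)$, and for $k\geq 2$ set both rows of $A^{(k)}$ equal to $(u_{k1},u_{k2})$. A direct substitution in (\ref{eq:paramMn}) recovers $p_{i_1\cdots i_n}=u_{1i_1}\cdots u_{ni_n}$, and $a^{(1)}_{11}=0$. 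The analogous construction, permuting the role of dimension $1$ and applying the swap matrix to the appropriate $A^{(i)}$, works for every $(i,s)$. There is no real obstacle here; the only mild subtlety is making sure the witnessing parameterization actually lies in $\{a^{(i)}_{1s}=0\}$ rather than merely producing $P$ by some other means, which is handled by the swap-matrix trick above.
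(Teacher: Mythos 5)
Your proof is correct, but it takes a genuinely different route from the paper's. The paper's argument is a two-line corollary of machinery already in place: every rank-one tensor in $\Delta_{2^n-1}$ is the image of a parameter vector with $\lambda_1=0$, hence lies in $\phi_n(\Theta_{\lambda_1\lambda_2})$, and Proposition~\ref{prop-fiber} has already established that $\Theta_{\lambda_1\lambda_2}\subseteq\phi_n^{-1}(\Gamma_{is})$ for every $i$ and $s$. You instead verify membership in each $\Gamma_{is}$ from first principles: you check the slice-rank-one binomials, check supermodularity (which holds with equality for rank-one tensors, so every $\pi$ works), and then exhibit an explicit preimage inside each facet $\{a^{(i)}_{1s}=0\}$ of $\Theta$ via the swap-matrix parameterization. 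What the paper's approach buys is brevity and consistency with the rest of Section~\ref{sec:stratification}, where all strata are analyzed through preimages under $\phi_n$; what yours buys is independence from Proposition~\ref{prop-fiber} and the slightly sharper explicit fact that a rank-one tensor is reached from \emph{each} boundary facet of the parameter space, not only from $\Theta_{\lambda_1\lambda_2}$. The one point worth making explicit in your write-up is the normalization step: a rank-one tensor in the simplex has $u_{k1}+u_{k2}>0$ for every $k$ (otherwise the tensor would be identically zero), so the rescaling to probability vectors that your construction relies on is always available.
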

 \begin{proof} 
   Every tensor in $\Delta_{2^n-1}$ of rank one is the image of a
   parameter vector in $\Theta$ where $\lambda_1 = 0$.  Such a
   parameter vector is in $\Theta_{\lambda_1 \lambda_2}$. By
   Proposition \ref{prop-fiber}, the image of $\Theta_{\lambda_1
     \lambda_2}$ under the parametrization map is contained in every
   $\Gamma_{is}$.
\end{proof}

\smallskip
In Corollary \ref{cor-all} we prove that the intersection of all 
nondegenerate points in $\Gamma_{is}$ for $i=1,\ldots, n$, $s=1,2$ is {\it equal} to the
set of nondegenerate tensors of rank one. 
This intersection gives us the unique $n$-dimensional stratum (type (5)).
Hence, when intersecting preimages of $\Gamma_{is}$ we ignore 
parameters mapping to tensors of rank one
since 
their images
are in every possible intersection. In summary, when we
refer to intersections of $\phi_n^{-1}(\Gamma_{is})$ we consider
only the \emph{relevant} part, meaning only those points that 
do not map to degenerate or rank one tensors.
For instance, by Proposition
\ref{prop-fiber} the relevant part of $\phi_n^{-1}(\Gamma_{is})$
consists of $\{a_{1s}^{(i)}=0\} \cup \{a_{2s}^{(i)}=0\} \cup
\bigcup_{k \neq i} \Theta_{ik}^\circ$.

\begin{lemma} \label{lem-2inter} The relevant part of
  $\phi_n^{-1}(\Gamma_{is}) \cap \phi_n^{-1}(\Gamma_{jt})$ where $i
  \neq j$ is
$$  \{a_{1s}^{(i)}=0\} \cap \{a_{1 t}^{(j)}=0\}  \, \bigcup \, \{a_{2s}^{(i)}=0\} \cap \{a_{2 t}^{(j)}=0\} \, \bigcup \, \Theta_{ij}^\circ .$$
\end{lemma}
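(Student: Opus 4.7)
The plan is to apply Proposition \ref{prop-fiber} to both preimages, distribute the intersection, and classify each resulting piece as either appearing in the claimed union, being empty, or mapping into the degenerate/rank-one locus (hence irrelevant). After discarding the $\Theta_{\lambda_1\lambda_2}$ summand (whose image is rank one by the proof of Proposition \ref{prop-all}), the relevant part of $\phi_n^{-1}(\Gamma_{is})$ reduces to three summands---two boundary pieces of the form $\{a^{(i)}_{rs}=0\}$ and the union $\bigcup_{k\neq i}\Theta_{ik}^\circ$---and analogously for $\phi_n^{-1}(\Gamma_{jt})$. Distributing then gives nine pairwise pieces to analyze.

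For the four boundary--boundary pieces $\{a^{(i)}_{rs}=0\}\cap\{a^{(j)}_{r't}=0\}$: when $r=r'$ I would keep the two ``same-row'' pieces, as they appear in the claim and generically contain non-degenerate supermodular tensors. For the two ``opposite-row'' pieces (say $r=1$, $r'=2$), I would compute the $(s,t)$-entry of the two-way marginal over $\{i,j\}$ using Lemma \ref{lem:marg} and observe that
$$q_{st} \;=\; \lambda_1 a^{(i)}_{1s}a^{(j)}_{1t} + \lambda_2 a^{(i)}_{2s}a^{(j)}_{2t} \;=\; 0,$$
since $a^{(i)}_{1s}=0$ kills the first term and $a^{(j)}_{2t}=0$ kills the second. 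Hence the image is degenerate and the piece is irrelevant.

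The four boundary--singular pieces, such as $\{a^{(i)}_{rs}=0\}\cap \Theta_{jk'}^\circ$, are all empty: by definition $\Theta_{jk'}^\circ\subseteq\mathrm{int}(\Theta)$, and points of $\mathrm{int}(\Theta)$ have every parameter strictly positive, which is incompatible with the condition $a^{(i)}_{rs}=0$.

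The main work, and the step I expect to be the principal obstacle, is the singular--singular piece $\Theta_{ik}^\circ\cap\Theta_{jk'}^\circ$ with $k\neq i$ and $k'\neq j$. Here I would run a case analysis on $\{i,k\}\cap\{j,k'\}$, which is necessarily a subset of $\{i,j\}$ since $k\neq i$ and $k'\neq j$. Being in the intersection forces $A^{(l)}$ to have rank one for every $l$ outside this set. When $\{i,k\}\cap\{j,k'\}=\{i,j\}$ (equivalently, $k=j$ and $k'=i$), the intersection is exactly $\Theta_{ij}^\circ$ and is retained. In every other configuration the set is a proper subset of $\{i,j\}$, which forces enough matrices to be rank one that the intersection lies in $\Theta_l^\circ$ for some $l$, or in $(\Theta^1)^\circ$; by the proof of Proposition \ref{prop-sing} these all map entirely into the rank-one locus and are irrelevant. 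Assembling the three relevant surviving pieces yields the claimed formula.
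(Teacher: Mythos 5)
Your proposal is correct and follows essentially the same route as the paper: discard the opposite-row boundary pieces via the vanishing two-way marginal $P_{\{i,j\}}(X_i=s,X_j=t)=0$, note the boundary--singular cross terms are empty since $\Theta_{jk'}^\circ\subseteq\mathrm{int}(\Theta)$, and reduce the singular--singular case to $\Theta_{ij}^\circ$ plus irrelevant sets of the form $\Theta_l^\circ$ or $\Theta^1$. One small imprecision: $\{i,k\}\cap\{j,k'\}$ need not be a subset of $\{i,j\}$ (take $k=k'=l\notin\{i,j\}$), but your final classification already handles that case as $\Theta_l^\circ$, so no gap results.
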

\begin{proof} The points in the set $\{a_{1s}^{(i)}=0\} \cap \{a_{2
    t}^{(j)}=0\}$ map to degenerate tensors since in the
  marginalization of the image tensor $P_{\{i,j\}}(X_i =s, X_j = t) =
  0$. A similar argument shows that $\{a_{2s}^{(i)}=0\} \cap
  \{a_{1t}^{(j)}=0\}$ is irrelevant.  So we just need to compute the
  intersection of $\cup_{k \neq i} \Theta_{ik}^\circ$ and
  $\cup_{\bar{k} \neq j} \Theta_{j\bar{k}}^\circ$. When $k = j$ and
  $\bar{k} = i$, we get $\Theta_{ij}^\circ$. Also, $\Theta_{ij}^\circ
  \cap \Theta_{j \bar{k}} = \Theta_j^\circ$ when $\bar{k} \neq i$, and
  $\Theta_{ik}^\circ \cap \Theta_{ji} = \Theta_i^\circ$ when $k \neq
  j$. Both are irrelevant. For the case $k \neq j$ and $\bar{k} \neq
  i$, we either get $\Theta^1$ if $k \neq \bar{k}$, or
  $\Theta_k^\circ$ if $k = \bar{k}$. Again both cases give irrelevant
  subsets.
\end{proof}

\begin{corollary} \label{cor-2inter} The nondegenerate intersection of
  $\Gamma_{is}$ with $\Gamma_{jt}$ where $i \neq j$ is a stratum of
  dimension $2n-1$.  There are ${n \choose 2} 4$ such strata.
\end{corollary}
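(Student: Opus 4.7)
The plan is to apply Lemma \ref{lem-2inter}, compute the image dimension of each piece of the preimage, and then count the strata combinatorially.

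First, Lemma \ref{lem-2inter} expresses the relevant (nondegenerate, non-rank-one) preimage of $\Gamma_{is} \cap \Gamma_{jt}$ as
\[\phi_n^{-1}(\Gamma_{is} \cap \Gamma_{jt}) \;=\; U_1 \,\cup\, U_2 \,\cup\, \Theta_{ij}^\circ,\]
with $U_1 := \{a_{1s}^{(i)}=0\} \cap \{a_{1t}^{(j)}=0\}$ and $U_2 := \{a_{2s}^{(i)}=0\} \cap \{a_{2t}^{(j)}=0\}$, so $\Gamma_{is} \cap \Gamma_{jt} = \phi_n(U_1) \cup \phi_n(U_2) \cup \phi_n(\Theta_{ij}^\circ)$, and its dimension is the maximum of the three image dimensions. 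The label-swap symmetry, which simultaneously exchanges the rows of every $A^{(k)}$ and the entries of $\lambda$, induces a bijection $U_1 \leftrightarrow U_2$ compatible with $\phi_n$, so $\phi_n(U_1) = \phi_n(U_2)$ and it suffices to analyze two image dimensions.

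Next, I would show $\dim \phi_n(U_1) = 2n-1$. Since $i \neq j$, the two equations defining $U_1$ are independent, hence $\dim U_1 = \dim \Theta - 2 = 2n-1$. A generic $\theta \in U_1$ has $\lambda_1 \lambda_2 \neq 0$ and every $A^{(k)}$ of rank two (the constraint $a_{1s}^{(i)} = 0$ merely forces $a_{1\bar s}^{(i)} = 1$, leaving the second row of $A^{(i)}$ generic), so by Proposition \ref{prop-sing} $\theta$ is a regular point of $\phi_n$. Using $\dim \M_n = 2n+1$ (Proposition \ref{prop-boundary}), the Jacobian at $\theta$ is an isomorphism $T_\theta \Theta \to T_{\phi_n(\theta)} \M_n$, whose restriction to the codimension-two subspace $T_\theta U_1$ is therefore injective of rank $2n-1$; thus $\phi_n(U_1)$ has local dimension $2n-1$. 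Meanwhile, Lemma \ref{lem-singdim} gives $\dim \phi_n(\Theta_{ij}^\circ) = n+1 < 2n-1$ whenever $n \geq 3$, so this piece is absorbed into the closure of $\phi_n(U_1)$, and $\dim(\Gamma_{is} \cap \Gamma_{jt}) = 2n-1$.

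Finally, for the count, $\{i,j\}$ ranges over the $\binom{n}{2}$ unordered pairs and $(s,t)$ over the four elements of $\{1,2\}^2$, giving $4\binom{n}{2}$ strata, which are pairwise distinct because different choices of $(\{i,j\},(s,t))$ correspond to different rank-one-slice conditions on the tensor. The main obstacle I expect is the Jacobian step for $\dim \phi_n(U_1)$: it relies on combining Proposition \ref{prop-sing} (to locate a regular point inside $U_1$) with Proposition \ref{prop-boundary} (which identifies the correct target for the Jacobian as $T \M_n$ rather than the ambient simplex), so that the full-rank Jacobian restricts to an injection on $T_\theta U_1$ and one does not accidentally undercount the image dimension.
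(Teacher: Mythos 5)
Your proposal is correct and follows essentially the same route as the paper: Lemma \ref{lem-2inter} for the relevant preimage, a parameter count giving $\dim(U_1)=2n-1$ together with generic smoothness of $\phi_n$ on $U_1\cup U_2$ (which you justify via Proposition \ref{prop-sing}), Lemma \ref{lem-singdim} to dismiss the lower-dimensional $\Theta_{ij}^\circ$ piece, and the $4\binom{n}{2}$ count. Your Jacobian argument is simply an expanded version of the paper's phrase ``generically smooth,'' so there is nothing substantively different to compare.
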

\begin{proof} The parametrization map $\phi_n$ is generically smooth on $\bigcup_{u =1,2} \{a_{us}^{(i)}=0\} \cap
  \{a_{ut}^{(j)}=0\}$, and a simple parameter count shows that this
  set has dimension equal to $2n-1$.  Together with Lemma \ref{lem-singdim}
this implies the result. 
For each $1 \leq i < j \leq n$ and each choice of $s,t \in \{1,2\}$ we get such a stratum. Hence, there are
${n \choose 2} 4$ of them. 
\end{proof}

\begin{lemma} \label{lem-excp1}
The relevant part of $\phi_n^{-1}(\Gamma_{i1}) \cap \phi_n^{-1}(\Gamma_{i2})$ is 
$$ \{a_{11}^{(i)} =0\} \cap \{a_{22}^{(i)} =0\} \bigcup  \{a_{12}^{(i)} =0\} \cap \{a_{21}^{(i)} =0\} \bigcup  \cup_{k \neq i} \Theta_{ik}^\circ.$$  
\end{lemma}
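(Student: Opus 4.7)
The plan is to apply Proposition \ref{prop-fiber} to both factors and distribute. By that proposition (combined with the standing convention that parameters mapping to degenerate or rank-one tensors are discarded as irrelevant), the relevant part of $\phi_n^{-1}(\Gamma_{is})$ is $\{a_{1s}^{(i)}=0\} \cup \{a_{2s}^{(i)}=0\} \cup \bigcup_{k \neq i} \Theta_{ik}^\circ$. Taking $s=1$ and $s=2$ and distributing the intersection yields nine pairwise terms, which I would group into three types: boundary-boundary, boundary-singular, and singular-singular.

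The four boundary-boundary terms split two and two. The pairs $\{a_{11}^{(i)}=0\}\cap\{a_{12}^{(i)}=0\}$ and $\{a_{21}^{(i)}=0\}\cap\{a_{22}^{(i)}=0\}$ are empty because the stochasticity constraint $a_{j1}^{(i)}+a_{j2}^{(i)}=1$ forbids an entire row of $A^{(i)}$ from vanishing. The remaining two, $\{a_{11}^{(i)}=0\}\cap\{a_{22}^{(i)}=0\}$ and $\{a_{12}^{(i)}=0\}\cap\{a_{21}^{(i)}=0\}$, force $A^{(i)}$ to be one of the two $2\times 2$ permutation matrices and contribute exactly the first two sets appearing in the statement.

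The four boundary-singular terms are uniformly empty: since $\Theta_{ik}^\circ = \Theta_{ik}\cap\mathrm{int}(\Theta)$ by definition, every entry of every $A^{(l)}$ must be strictly positive on $\Theta_{ik}^\circ$, which is incompatible with any condition $a_{1s}^{(i)}=0$ or $a_{2s}^{(i)}=0$. For the singular-singular term $\bigcup_{k\neq i}\Theta_{ik}^\circ\cap\bigcup_{\bar k\neq i}\Theta_{i\bar k}^\circ$, the diagonal contributions $k=\bar k$ recover $\bigcup_{k\neq i}\Theta_{ik}^\circ$, the third set in the statement. The off-diagonal contributions $k\neq\bar k$ collapse to $\Theta_i^\circ$, since requiring $A^{(l)}$ to have rank one for $l\notin\{i,k\}$ and also for $l\notin\{i,\bar k\}$ forces rank one for every $l\neq i$; a direct computation with the parameterization (\ref{eq:paramMn}) then factors the resulting tensor as a product of functions of single indices, so the image has rank one and the contribution is irrelevant.

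The main obstacle is not any single calculation but the bookkeeping: one must classify all nine pairwise intersections as empty, irrelevant, or surviving, and verify that the three survivors are precisely the three sets in the statement. This is directly parallel to the analysis carried out in Lemma \ref{lem-2inter}, but simpler because all matrices of interest now share the same index $i$, eliminating the need to rule out degenerate images via the $P_{\{i,j\}}$ marginal.
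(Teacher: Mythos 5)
Your proposal is correct and follows essentially the same route as the paper: apply Proposition~\ref{prop-fiber} to both preimages, distribute the intersection, and observe that $\{a_{11}^{(i)}=0\}\cap\{a_{12}^{(i)}=0\}$ and $\{a_{21}^{(i)}=0\}\cap\{a_{22}^{(i)}=0\}$ are empty by row stochasticity. The paper's proof records only that one key observation and leaves the remaining bookkeeping (the emptiness of the boundary--singular terms and the irrelevance of $\Theta_i^\circ$ from the off-diagonal singular--singular terms) implicit, whereas you spell it out; the content is the same.
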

\begin{proof}
  The intersections $\{a_{11}^{(i)} =0\} \cap \{a_{12}^{(i)} =0\}$ and
  $\{a_{21}^{(i)} =0\} \cap \{a_{22}^{(i)} =0\}$ are empty in the
  parameter space $\Theta$.
\end{proof}

\begin{corollary} \label{cor-excp1}
The nondegenerate intersection $\Gamma_{i1} \cap \Gamma_{i2}$ is a stratum of dimension $2n-1$. There are $n$ such exceptional strata. 
\end{corollary}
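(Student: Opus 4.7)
The plan is to invoke Lemma \ref{lem-excp1}, which expresses the relevant preimage of $\Gamma_{i1}\cap\Gamma_{i2}$ under $\phi_n$ as a union of three pieces: the two ``permutation'' sets $\{a_{11}^{(i)}=0\}\cap\{a_{22}^{(i)}=0\}$ and $\{a_{12}^{(i)}=0\}\cap\{a_{21}^{(i)}=0\}$, together with $\bigcup_{k\neq i}\Theta_{ik}^\circ$. I will then compute the dimension of the image of each piece under $\phi_n$ separately and take the maximum, mirroring the strategy used in the proof of Corollary \ref{cor-2inter}.

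For the first piece, the constraints $a_{11}^{(i)}=a_{22}^{(i)}=0$ combined with row-stochasticity $a_{j1}^{(i)}+a_{j2}^{(i)}=1$ force $A^{(i)}$ to equal the antidiagonal permutation matrix, thus eliminating both free parameters of $A^{(i)}$. This yields a parameter subset of dimension $(2n+1)-2=2n-1$. A generic point of this subset satisfies $\lambda_1\lambda_2\neq 0$ and has every $A^{(k)}$ with $k\neq i$ of rank $2$, hence by Proposition \ref{prop-sing} lies outside the singular locus of $\phi_n$. Consequently $\phi_n$ is a local diffeomorphism at such a point and its image has dimension $2n-1$. The second piece is handled analogously, with $A^{(i)}$ now forced to equal the identity; its image in fact coincides with the image of the first piece via the latent-label swap noted after Proposition \ref{prop-boundary}. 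For the remaining piece, Lemma \ref{lem-singdim} gives $\dim\phi_n(\Theta_{ik}^\circ)=n+1$ for each $k\neq i$, which is strictly less than $2n-1$ since $n\geq 3$; this piece therefore sits in the closure of the others and does not affect the stratum's dimension.

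The count of $n$ strata is then immediate, since one exceptional stratum arises for each $i\in\{1,\ldots,n\}$. The main point I anticipate needing to verify is that these strata are genuinely new, i.e., distinct from the ${n \choose 2}\cdot 4$ strata of Corollary \ref{cor-2inter}: but the latter are indexed by intersections $\Gamma_{is}\cap\Gamma_{jt}$ with $i\neq j$, while ours require $i=j$, so the two families cannot overlap by indexing alone. The structural difference is also reflected in the preimages: the non-exceptional codimension-two strata have preimages that split across two distinct coordinate directions, whereas these exceptional ones arise by constraining a single matrix $A^{(i)}$ to be a permutation.
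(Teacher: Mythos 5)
Your argument is correct and follows essentially the same route as the paper: both use the decomposition of the relevant preimage from Lemma \ref{lem-excp1}, observe that $\phi_n$ is generically smooth (nonsingular) on the two $(2n-1)$-dimensional permutation pieces, and appeal to Lemma \ref{lem-singdim} to see that the $\Theta_{ik}^\circ$ contribution has dimension only $n+1<2n-1$. Your added remarks on label swapping and on distinctness from the Corollary \ref{cor-2inter} strata are harmless elaborations of points the paper treats as obvious.
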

\begin{proof} The parametrization map $\phi_n$ is generically
smooth on $\{a_{11}^{(i)} =0\} \cap \{a_{22}^{(i)}
  =0\}$, and on $\{a_{12}^{(i)} =0\} \cap \{a_{21}^{(i)} =0\}$, and
  the dimension of this set is $2n-1$. Together with Lemma \ref{lem-singdim} 
this gives the first statement. The count is obvious.
\end{proof}

\begin{lemma} \label{lem-triple} The relevant part of
  $\phi_n^{-1}(\Gamma_{is}) \cap \phi_n^{-1}(\Gamma_{jt}) \cap
  \phi_n^{-1}(\Gamma_{kv})$ where $i,j,k$ are distinct is
$$  \bigcup_{u =1,2} \{a_{us}^{(i)}=0\} \cap \{a_{u t}^{(j)}=0\} \cap \{a_{uv}^{(k)} = 0 \}.$$
\end{lemma}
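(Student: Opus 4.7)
The plan is to start from Lemma~\ref{lem-2inter}, which already computes the relevant part of $\phi_n^{-1}(\Gamma_{is}) \cap \phi_n^{-1}(\Gamma_{jt})$, and then intersect term by term with $\phi_n^{-1}(\Gamma_{kv})$ using the description of the latter from Proposition~\ref{prop-fiber}: namely,
$$\phi_n^{-1}(\Gamma_{kv}) \;=\; \{a_{1v}^{(k)}=0\} \,\cup\, \{a_{2v}^{(k)}=0\} \,\cup\, \Theta_{\lambda_1\lambda_2} \,\cup\, \bigcup_{\ell \neq k}\Theta_{k\ell}^\circ.$$
Distributing the intersection over the union in Lemma~\ref{lem-2inter}, I have six ``building blocks'' to check (the parameter sets $\{a_{us}^{(i)}=0\}\cap\{a_{ut}^{(j)}=0\}$ for $u=1,2$ and $\Theta_{ij}^\circ$, each crossed with each summand above). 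I would drop $\Theta_{\lambda_1\lambda_2}$ immediately, since by Proposition~\ref{prop-all} it maps to rank-one tensors, and these have been declared irrelevant.

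First I would handle the boundary blocks $\{a_{us}^{(i)}=0\}\cap\{a_{ut}^{(j)}=0\}$ for a fixed $u\in\{1,2\}$. Intersecting with $\{a_{uv}^{(k)}=0\}$ yields exactly the stratum named in the statement. Intersecting with $\{a_{(3-u)v}^{(k)}=0\}$ gives parameters for which $a_{us}^{(i)}=0$ and $a_{(3-u)v}^{(k)}=0$ simultaneously, so the $\{i,k\}$-marginal satisfies $P_{\{i,k\}}(X_i=s,X_k=v)=\lambda_1 a_{1s}^{(i)}a_{1v}^{(k)}+\lambda_2 a_{2s}^{(i)}a_{2v}^{(k)}=0$, making the image degenerate. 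Intersecting with $\Theta_{k\ell}^\circ$ for some $\ell\neq k$: if $\ell\notin\{i,j\}$, then $A^{(i)}$ has rank one and so $a_{us}^{(i)}=0$ forces the entire slice $X_i=s$ of the image to vanish, hence the image is degenerate; and if $\ell\in\{i,j\}$, symmetrically $A^{(j)}$ or $A^{(i)}$ (the other one) has rank one and again a slice vanishes.

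Second I would handle the singular block $\Theta_{ij}^\circ$. The key observation is that in $\Theta_{ij}^\circ$ the matrix $A^{(k)}$ already has rank one (since $k\neq i,j$), so $a_{1v}^{(k)}=a_{2v}^{(k)}$. Therefore intersecting with $\{a_{1v}^{(k)}=0\}$ (or $\{a_{2v}^{(k)}=0\}$) forces both entries to vanish, killing the $v$-slice in direction $k$ of the image tensor—hence degeneracy. Finally, for the intersections $\Theta_{ij}^\circ\cap\Theta_{k\ell}^\circ$: writing out the rank-one conditions on the $A^{(m)}$'s, the union of the two sets of constraints forces all $A^{(m)}$ with $m\notin\{i,j\}\cap\{k,\ell\}$ to be rank one. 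Since $k\notin\{i,j\}$, this intersection is always contained in some $\Theta_m^\circ$ or in $\Theta^1$, both of which map to rank-one tensors by the argument already used in the proof of Lemma~\ref{lem-2inter} and are therefore irrelevant.

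Collecting the only surviving contributions yields the claimed description. The main nuisance, rather than a deep obstacle, is keeping the bookkeeping straight across the many case distinctions; the recurring mechanism is simple and already appears in the proofs of Lemmas~\ref{lem-singdegenerate} and~\ref{lem-2inter}: any intersection that pairs a zero entry of $A^{(i)}$ with a rank-one $A^{(i)}$ somewhere else in the product, or pairs complementary zero entries on disjoint coordinates, forces a zero marginal and thus a degenerate image.
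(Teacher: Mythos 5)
Your proposal is correct and follows essentially the same route as the paper: the paper's proof explicitly says it proceeds as in Lemma~\ref{lem-2inter}, discarding mixed-row zero patterns as degenerate and observing that the intersections of the singular-locus pieces collapse to sets like $\Theta^1$ that map to rank-one tensors. You have simply written out in full the casework that the paper leaves implicit (including the $\Theta_{ij}^\circ\cap\{a_{uv}^{(k)}=0\}$ case, which is Lemma~\ref{lem-singdegenerate}), and every case is handled correctly.
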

\begin{proof} We proceed as in the proof Lemma \ref{lem-2inter}. After
  discarding irrelevant subsets such as $\{a_{1s}^{(i)}=0\} \cap
  \{a_{1 t}^{(j)}=0\} \cap \{a_{2v}^{(k)} = 0 \}$ (since they map to
  degenerate tensors) we also see that the desired intersection
  contains $\Theta_{ij}^\circ \cap \Theta_{ik}^\circ \cap
  \Theta_{jk}^\circ = \Theta^1$. This is also irrelevant.
\end{proof}
This result immediately generalizes to higher-order intersections.
\begin{corollary} \label{cor-many} Let $I \subset \{1, \ldots, n\}$
  where $|I| = \ell \geq 3$. Then for each choice of $s_i \in \{1,2\}$
  for $i \in I$ the nondegenerate intersection $\bigcap_{i \in I}
  \Gamma_{is_i}$ is a stratum of dimension $2n+1 - \ell$. There are
  ${n \choose \ell} 2^\ell$ such strata.
\end{corollary}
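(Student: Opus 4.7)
The plan is to generalize Lemma \ref{lem-triple} by induction on $\ell = |I|$. The base case $\ell = 3$ is Lemma \ref{lem-triple}, and for the inductive step I would intersect the already-computed relevant part of $\bigcap_{i \in I'} \phi_n^{-1}(\Gamma_{is_i})$ (for $I'$ of size $\ell-1$) with one more preimage $\phi_n^{-1}(\Gamma_{js_j})$ for $j \notin I'$, using Proposition~\ref{prop-fiber}. After this induction, the goal is to show that the relevant part of $\bigcap_{i \in I} \phi_n^{-1}(\Gamma_{is_i})$ equals
\[
\bigcup_{u \in \{1,2\}} \bigcap_{i \in I} \{a_{u s_i}^{(i)} = 0\}.
\]

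The bookkeeping proceeds exactly as in the proofs of Lemmas~\ref{lem-2inter} and~\ref{lem-triple}: a mixed term $\{a_{1 s_i}^{(i)} = 0\} \cap \{a_{2 s_j}^{(j)} = 0\}$ forces the marginalization $P_{\{i,j\}}$ to have a zero entry, hence maps into the degenerate part of $\Delta_{2^n-1}$ and is discarded; any intersection involving the singular-locus pieces $\Theta_{ik}^\circ$ for $k \neq i$ collapses to $\Theta_{ij}^\circ \cap \Theta_{ik}^\circ = \Theta_i^\circ$ or further down to $\Theta^1$ as soon as we intersect with a third preimage, and by Proposition~\ref{prop-sing} together with Lemma~\ref{lem-singdim} all such pieces map into the rank-one stratum (hence are irrelevant by Proposition~\ref{prop-all}). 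This leaves only the two ``pure'' branches with a uniform row choice $u \in \{1,2\}$.

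With the preimage identified, generic smoothness of $\phi_n$ on $\bigcap_{i \in I}\{a_{u s_i}^{(i)} = 0\}$ (the parameters chosen are away from $\Theta_{\lambda_1\lambda_2}$ and, generically, outside the singular locus of Proposition~\ref{prop-sing}) shows that the image has the same dimension as the source. The $\ell$ equations $a_{u s_i}^{(i)} = 0$ cut independent hyperplanes in $\Theta$, so the source has dimension $(2n+1) - \ell$, giving the stated dimension of the stratum. For the count, we choose the index set $I$ in $\binom{n}{\ell}$ ways and the slice indices $s_i \in \{1,2\}$ in $2^\ell$ ways, yielding $\binom{n}{\ell} 2^\ell$ strata.

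The main obstacle will be the inductive bookkeeping of which singular-locus pieces survive under iterated intersection: one must carefully verify that every intersection of the form $\Theta_{i_1 k_1}^\circ \cap \Theta_{i_2 k_2}^\circ \cap \cdots$ for $\ell \geq 3$ preimages is contained in some $\Theta_k^\circ$ or in $\Theta^1$, and therefore maps to a rank-one tensor (hence to the minimal stratum of type~(5), which has already been accounted for). Once this is established uniformly, the remaining dimension and counting arguments are straightforward.
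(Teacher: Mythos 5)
Your proposal is correct and follows essentially the same route as the paper: the paper likewise deduces the identification of the relevant part of $\bigcap_{i \in I}\phi_n^{-1}(\Gamma_{is_i})$ with $\bigcup_{u=1,2}\bigcap_{i\in I}\{a_{us_i}^{(i)}=0\}$ directly from Lemma~\ref{lem-triple} (you merely make the ``immediate generalization'' explicit as an induction), then uses generic smoothness of $\phi_n$ on these sets for the dimension count and the obvious enumeration. The only nit is that the fact that $\Theta_k^\circ$ and $\Theta^1$ map to rank-one tensors comes from the proof of Proposition~\ref{prop-sing}, not from Lemma~\ref{lem-singdim}, but this does not affect the argument.
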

\begin{proof} 
  Lemma \ref{lem-triple} implies that the relevant part of $\bigcap_{i
    \in I} \phi_n^{-1}(\Gamma_{is_i})$ is
$$  \bigcup_{u =1,2}  \left( \bigcap_{s_i: i \in I} \{a_{us_i}^{(i)}=0\} \right).$$
Each piece of this union has dimension $2n+1 - \ell$, and since
$\phi_n$ is generically smooth on these sets the
intersection $\bigcap_{i \in I} \Gamma_{is_i}$ is a stratum of the
same dimension. It is easy to count such strata.
\end{proof}

\begin{lemma} \label{lem-excp2} The relevant part of
  $\phi_n^{-1}(\Gamma_{i1}) \cap \phi_n^{-1}(\Gamma_{i2}) \cap
  \phi_n^{-1}(\Gamma_{j1})$ is $\Theta_{ij}^\circ$.  Moreover, this is
  equal to the relevant part of $\phi_n^{-1}(\Gamma_{i1}) \cap
  \phi_n^{-1}(\Gamma_{i2}) \cap \phi_n^{-1}(\Gamma_{j1}) \cap
  \phi_n^{-1}(\Gamma_{j2})$.
\end{lemma}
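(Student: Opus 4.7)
The plan is to intersect the description of the relevant part of $\phi_n^{-1}(\Gamma_{i1})\cap\phi_n^{-1}(\Gamma_{i2})$ from Lemma \ref{lem-excp1} with the description of $\phi_n^{-1}(\Gamma_{j1})$ from Proposition \ref{prop-fiber}. The easy inclusion $\Theta_{ij}^\circ \subseteq \phi_n^{-1}(\Gamma_{i1}) \cap \phi_n^{-1}(\Gamma_{i2}) \cap \phi_n^{-1}(\Gamma_{j1}) \cap \phi_n^{-1}(\Gamma_{j2})$ is immediate from Proposition \ref{prop-fiber} together with the identity $\Theta_{ij}^\circ=\Theta_{ji}^\circ$, and it already takes care of one direction of both statements.

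For the reverse, I would write Lemma \ref{lem-excp1}'s description as a union of three pieces: the two ``swap-matrix'' loci in $A^{(i)}$, and the singular-locus part $\bigcup_{k\neq i}\Theta_{ik}^\circ$. Similarly, the relevant part of $\phi_n^{-1}(\Gamma_{j1})$ splits as $\{a_{11}^{(j)}=0\}\cup\{a_{21}^{(j)}=0\}\cup\bigcup_{k'\neq j}\Theta_{jk'}^\circ$. I would then treat the nine resulting intersections in three groups: first, a swap-matrix locus in $A^{(i)}$ intersected with a boundary condition on $A^{(j)}$ forces a zero entry in the marginal $P_{\{i,j\}}$ (as in the proof of Lemma \ref{lem-singdegenerate}), so maps to degenerate tensors and is irrelevant; second, a swap-matrix locus intersected with $\Theta_{jk'}^\circ$ is empty, since $\Theta_{jk'}^\circ$ lies in $\mathrm{int}(\Theta)$ while $a_{11}^{(i)}=0$ (or $a_{12}^{(i)}=0$) lies on its boundary; third, $\Theta_{ik}^\circ$ intersected with $\{a_{11}^{(j)}=0\}$ or $\{a_{21}^{(j)}=0\}$ is empty for the same interior-vs-boundary reason.

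The only nontrivial case is the intersection $\Theta_{ik}^\circ\cap\Theta_{jk'}^\circ$, and this is where I expect the bulk of the work. Here I would track which matrices are forced to have rank one: the union of the two rank-one requirements demands rank one for every index $l\notin\{i,k\}\cap\{j,k'\}$. Going through the possibilities for $(k,k')$, the intersection $\{i,k\}\cap\{j,k'\}$ is either $\{i,j\}$, which happens precisely when $k=j$ and $k'=i$ and yields exactly $\Theta_{ij}^\circ$, or a singleton $\{m\}$ with $m\in\{i,j,k\}$, yielding $\Theta_m^\circ$, or empty, yielding $\Theta^1$. In the last two sub-cases, a direct factorisation of $\phi_n$ shows that if every $A^{(l)}$ with $l\neq m$ has equal rows, then the image tensor factors as an outer product $\bigotimes_l u_l$ with $u_l(k_l)=a^{(l)}_{1k_l}$ for $l\neq m$ and $u_m(k_m)=\sum_r\lambda_r a^{(m)}_{rk_m}$, so these contributions consist entirely of rank-one tensors and are irrelevant. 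The main obstacle is precisely this combinatorial book-keeping paired with the rank-one factorisation argument. Finally, the ``moreover'' claim follows at once: adding a fourth constraint $\phi_n^{-1}(\Gamma_{j2})$ can only shrink the relevant part, and $\Theta_{ij}^\circ$ is already contained in $\phi_n^{-1}(\Gamma_{j2})$ by Proposition \ref{prop-fiber}, so equality persists.
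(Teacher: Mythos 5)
Your proposal is correct and follows essentially the same route as the paper's proof: intersect the description from Lemma \ref{lem-excp1} with that from Proposition \ref{prop-fiber}, discard the degenerate pieces (swap-matrix loci meeting a zero entry of $A^{(j)}$, which kill an entry of $P_{\{i,j\}}$) and the pieces mapping to rank-one tensors (the intersections $\Theta_{ik}^\circ\cap\Theta_{jk'}^\circ$ other than $\Theta_{ij}^\circ$), and note that $\Theta_{ij}^\circ\subseteq\phi_n^{-1}(\Gamma_{j2})$ gives the ``moreover'' claim. Your case enumeration is a bit more explicit than the paper's ``up to symmetry'' treatment, but the substance is identical.
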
 
\begin{proof}
  We have computed $\phi_n^{-1}(\Gamma_{i1}) \cap
  \phi_n^{-1}(\Gamma_{i2})$ in Lemma \ref{lem-excp1}.  Together with
  Proposition \ref{prop-fiber} we conclude that we need to describe
  the intersection of
$$ \{a_{11}^{(i)} =0\} \cap \{a_{22}^{(i)} =0\} \bigcup  \{a_{12}^{(i)} =0\} \cap \{a_{21}^{(i)} =0\} \bigcup  \cup_{k \neq i} \Theta_{ik}^\circ$$  
with $$ \{a_{11}^{(j)}=0\} \cup \{a_{21}^{(j)}=0\} \bigcup \cup_{k
  \neq j} \Theta_{jk}^\circ.$$ 
Up to symmetry, we get the following
intersections: (i) $\{a_{12}^{(i)}=0,a_{21}^{(i)}=0,a_{11}^{(j)}=0\}$,
(ii) $\Theta^\circ_{ij}$. It is therefore enough to show that the
first set is irrelevant. Let $P$ be a tensor that is in the image of a
point in the set (i). Then in the marginal distribution $P_{\{i, j\}}$
we have $P_{\{i,j\}}(X_i =1, X_j=1) =0$. Hence $P$ is degenerate.
Finally, when we intersect further with $\phi_n^{-1}(\Gamma_{j2}) =
\{a_{12}^{(j)}=0\} \cup \{a_{22}^{(j)}=0\} \cup \bigcup_{k \neq j}
\Theta_{jk}^\circ$, still the only thing that survives as relevant is
$\Theta_{ij}^\circ$.
\end{proof}

\begin{corollary} \label{cor-excp2} For $i\neq j$, the nondegenerate points 
  in the intersection $\Gamma_{i1} \cap \Gamma_{i2} \cap \Gamma_{j1} \cap
  \Gamma_{j2}$ is a stratum of dimension $n+1$.  There are ${ n
    \choose 2}$ such strata.
\end{corollary}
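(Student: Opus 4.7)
The plan is to deduce this corollary almost immediately from the two preceding results, Lemma \ref{lem-excp2} and Lemma \ref{lem-singdim}. By Lemma \ref{lem-excp2}, the relevant (i.e., nondegenerate, non-rank-one) part of the preimage $\phi_n^{-1}(\Gamma_{i1}) \cap \phi_n^{-1}(\Gamma_{i2}) \cap \phi_n^{-1}(\Gamma_{j1}) \cap \phi_n^{-1}(\Gamma_{j2})$ is exactly $\Theta_{ij}^\circ$. Therefore the nondegenerate part of the intersection $\Gamma_{i1} \cap \Gamma_{i2} \cap \Gamma_{j1} \cap \Gamma_{j2}$ coincides with $\phi_n(\Theta_{ij}^\circ)$.

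Next, I invoke Lemma \ref{lem-singdim}, which asserts that $\phi_n(\Theta_{ij})$ is an $(n+1)$-dimensional subset of $\Delta_{2^n-1}$ isomorphic to $\Delta_3 \times (\Delta_1)^{n-2}$. Restricting to the interior $\Theta_{ij}^\circ$ removes only a lower-dimensional boundary, so the image still has dimension $n+1$. This proves the dimension claim.

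For the enumeration, each such stratum is indexed by an unordered pair $\{i,j\} \subset \{1,\ldots,n\}$ with $i\neq j$, since interchanging the roles of $i$ and $j$ produces the same intersection of the four codimension-one strata and the same subset $\Theta_{ij}^\circ = \Theta_{ji}^\circ$. Hence there are exactly $\binom{n}{2}$ such exceptional strata.

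The only subtlety, and the one point I would verify carefully, is that these strata are genuinely distinct from the codimension-two strata enumerated in Corollary \ref{cor-2inter} and from each other. The distinctness from the codimension-two strata follows because those have dimension $2n-1 > n+1$ for $n\geq 3$, while distinctness between different pairs $\{i,j\}$ follows from the fact that $\Theta_{ij}^\circ$ and $\Theta_{i'j'}^\circ$ intersect only in the locus where a third matrix $A^{(k)}$ has rank one, which sits in a lower-dimensional sub-stratum and does not affect the generic point. With this, both assertions of the corollary are established.
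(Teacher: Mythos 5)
Your argument is essentially the paper's own proof: Lemma \ref{lem-excp2} identifies the nondegenerate part of the four-fold intersection with $\phi_n(\Theta_{ij}^\circ)$, Lemma \ref{lem-singdim} gives the dimension $n+1$, and the count is by unordered pairs $\{i,j\}$. The one step the paper performs that you omit is the verification that $\phi_n(\Theta_{ij}^\circ)$ is \emph{not contained in any further} codimension-one component $\Gamma_{k1}$ or $\Gamma_{k2}$ with $k\neq i,j$: by Proposition \ref{prop-fiber}, every point of $\Theta_{ij}^\circ\cap\phi_n^{-1}(\Gamma_{ks})$ maps to a rank-one tensor, hence is irrelevant. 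This check is what certifies that the intersection $\Gamma_{i1}\cap\Gamma_{i2}\cap\Gamma_{j1}\cap\Gamma_{j2}$ genuinely ``defines a stratum'' indexed by the pair $\{i,j\}$, i.e.\ that it does not secretly coincide with a deeper intersection (ultimately the rank-one stratum of Corollary \ref{cor-all}), which would break the enumeration in Theorem \ref{thm-main1}. Your closing paragraph worries instead about distinctness from the codimension-two strata and between different pairs --- both points are correct but they do not substitute for the non-containment check above. Adding that one line (citing Proposition \ref{prop-fiber}) makes your proof complete and identical in substance to the paper's.
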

\begin{proof} The nondegenerate intersection given in the statement is
  the image of $\Theta_{ij}^\circ$ by Lemma \ref{lem-excp2}. This
  intersection is not contained in any other $\Gamma_{k1}$ or
  $\Gamma_{k2}$ for $k\neq i,j$ since by Proposition \ref{prop-fiber}
  everything in $\Theta_{ij}^\circ \cap \phi_n^{-1}(\Gamma_{k1})$ maps
  to tensors of rank one. Hence, indeed $\Gamma_{i1} \cap \Gamma_{i2}
  \cap \Gamma_{j1} \cap \Gamma_{j2}$ defines a stratum. Lemma \ref{lem-singdim} implies that the dimension
of this stratum is $n+1$.  Clearly, there are ${n \choose 2}$ such strata.
\end{proof}
\begin{corollary} \label{cor-all} The intersection of all nondegenerate
points in
  $\Gamma_{is}$ for $i=1, \ldots, n$, $s=1,2$ is the unique stratum
  of dimension $n$ consisting of all 
  nondegenerate
  tensors of rank one.
\end{corollary}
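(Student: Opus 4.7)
The plan is to show containment in both directions, using the work already done in Proposition~\ref{prop-all} and Lemma~\ref{lem-excp2}, and then to verify the dimension count.

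For one inclusion, Proposition~\ref{prop-all} already shows that every tensor of rank one lies in every $\Gamma_{is}$, so in particular every nondegenerate rank-one tensor lies in the full intersection. For the reverse inclusion, I would take any nondegenerate $P$ lying in $\bigcap_{i,s}\Gamma_{is}$ and a preimage $\theta \in \phi_n^{-1}(P)$. Since $n \geq 3$, for every index $k \in \{1,\ldots,n\}$ I can choose a pair $i \neq j$ with $k \notin \{i,j\}$; applying Lemma~\ref{lem-excp2} to this pair, the relevant part of $\phi_n^{-1}(\Gamma_{i1}) \cap \phi_n^{-1}(\Gamma_{i2}) \cap \phi_n^{-1}(\Gamma_{j1}) \cap \phi_n^{-1}(\Gamma_{j2})$ equals $\Theta_{ij}^\circ$. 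The only pieces of this preimage discarded as ``irrelevant'' are those mapping to degenerate tensors or to rank-one tensors. Since $P$ is nondegenerate by hypothesis, $\theta$ cannot lie in the degenerate-mapping locus, so either $\theta$ already maps to a rank-one tensor (which is what we want) or $\theta \in \Theta_{ij}^\circ$, forcing $\rank(A^{(k)}) = 1$. As $k$ was arbitrary, in the remaining case we obtain $\rank(A^{(k)})=1$ for every $k$, i.e.\ $\theta \in \Theta^1$, and then $P=\phi_n(\theta)$ is a rank-one tensor anyway.

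For the dimension claim, a nondegenerate rank-one tensor in $\Delta_{2^n-1}$ is exactly the product of $n$ marginal distributions $\mu_i \in \interior(\Delta_1)$ (the full independence model). This locus is parameterized by $n$ independent parameters, so it has dimension $n$, matching the prediction of Theorem~\ref{thm-main1}(5). Uniqueness of this stratum among the $n$-dimensional strata is immediate from Theorem~\ref{thm-main1}, since the only stratum of dimension $n$ listed is the one associated with the empty face of $C_n$, which is precisely $\phi_n(\Theta_{\lambda_1\lambda_2})$, the full independence model.

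The only subtle step is the second paragraph: one must be careful that ``relevant'' excludes both degenerate and rank-one images, so the dichotomy ``either $\theta$ lands in a rank-one tensor, or $\theta \in \Theta_{ij}^\circ$'' really is exhaustive once $P$ is known to be nondegenerate. Everything else is bookkeeping.
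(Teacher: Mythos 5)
Your argument is correct and follows essentially the same route as the paper: Proposition~\ref{prop-all} gives one containment, and the computation behind Lemma~\ref{lem-excp2} (equivalently Corollary~\ref{cor-excp2}) shows that intersecting further with the $\Gamma$'s for a third index forces rank one, which is exactly how the paper closes the reverse containment. One small repair: do not cite Theorem~\ref{thm-main1} for the uniqueness of the $n$-dimensional stratum, since this corollary is itself an ingredient in the proof of that theorem; uniqueness instead follows from Corollaries~\ref{cor-2inter}, \ref{cor-excp1}, \ref{cor-many} and \ref{cor-excp2}, which show that every other relevant intersection has dimension at least $n+1$, so the full intersection is the only candidate at dimension $n$. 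Your direct dimension count ($n$ free parameters for a nondegenerate product distribution) is fine and matches the paper's.
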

\begin{proof} From Proposition \ref{prop-all} the intersection
  contains the set of tensors of rank one. Corollary $\ref{cor-excp2}$
  implies that $\bigcap_{s=1,2} \left( \Gamma_{is} \cap
    \Gamma_{js} \cap \Gamma_{ks} \right)$ is contained in the set of
  tensors of rank one establishing that the intersection of all
  codimension one strata is a stratum. The dimension of the set of
  rank one tensors is $n$.
\end{proof}

\vskip 0.2cm
\noindent Finally, we prove the main theorem. 

\vskip 0.1cm 
\noindent
\emph{Proof of Theorem \ref{thm-main1}}: Proposition
\ref{prop-boundary} implies that the interior of $\M_n$ has dimension
$2n+1$. The above results imply that any parameter vector with
$a_{1s}^{(i)} = 0$ or $a_{2s}^{(i)} =0$ for $s=1,2$ maps to the
algebraic
boundary of $\M_n$. Similarly, any parameter vector in
$\Theta_{ij}^\circ$ for $1 \leq i\neq j \leq n$ as well as a parameter
vector in $\Theta_{\lambda_1\lambda_2}$ is mapped to the boundary of
$\M_n$. The remaining parameter vectors must map to the interior of
$\M_n$, and these points are nonsingular parameter vectors that are in
the interior of $\Theta$.  We will associate the interior of $\M_n$
with the interior of the $n$-dimensional cube $C_n$.

Also by Proposition \ref{prop-boundary},  $\Gamma_{is}$ for
$i=1,\ldots, n$ and $s=1,2$ are precisely the $2n$ boundary strata of
dimension $2n$.  They are in bijection with the $2n$ facets of $C_n$.
By Proposition \ref{prop-fiber}, the preimage of each $\Gamma_{is}$ is $\{a_{1s}^{(i)}=0\} \cup
\{a_{2s}^{(i)}=0\} \cup \Theta_{\lambda_1\lambda_2} \cup \bigcup_{k
  \neq i} \Theta_{ik}^\circ$. 
Lemma \ref{lem-2inter} proves that $\Gamma_{is} \cap
\Gamma_{jt}$ for $i\neq j$ is the image of points in
$\{a_{1s}^{(i)}=0\} \cap \{a_{1 t}^{(j)}=0\} \, \bigcup \,
\{a_{2s}^{(i)}=0\} \cap \{a_{2 t}^{(j)}=0\} \, \bigcup \,
\Theta_{ij}^\circ$.  By Corollary~\ref{cor-2inter} this is the non-exceptional strata of dimension
$k= 2n-1$ and these strata correspond to $(n-2)$-dimensional faces of $C_n$
which are obtained as intersections of nonparallel facets of the cube
(i.e. $i \neq j$).  Lemma~\ref{lem-triple} and Corollary \ref{cor-many} take care of the
non-exceptional strata of dimension $n< k < 2n-1$ as the image of
$\bigcup_{u =1,2} \left( \bigcap_{s_i: i \in I} \{a_{us_i}^{(i)}=0\}
\right)$. This image is the intersection of $\bigcap_{i \in I}
\Gamma_{is_i}$ where $|I| = 2n+1 - k$. They correspond to faces of
$C_n$ of dimension $k-n-1$. This describes all nondegenerate strata
of types (1) and (2) in the statement of the theorem.

The exceptional strata of codimension two ($k = 2n-1$),
that is of type
(3), is 
described by 
Lemma \ref{lem-excp1} and Corollary \ref{cor-excp1}, combined with 
the proof of Lemma 4.5 in \cite{ARSZ}. 
The statement about the exceptional strata 
(type (4))
of dimension $k = n+1$ 
follows from Lemma \ref{lem-excp2} and Corollary \ref{cor-excp2}.  And
finally, the proof of Lemma \ref{lem-excp2} and Corollary
\ref{cor-all} provide the description of the unique $n$-dimensional
stratum given in type (5). $\Box$

\section{Maximum likelihood estimation over $\M_n$} \label{sec:MLE}
In this section we present how our understanding of the boundary
of $\M_n$ provides a partial understanding of the maximum likelihood
estimation over this model class. For $\M_3$, maximum likelihood
estimators are computed exactly.

Suppose an independent sample of size $N>1$ was observed from a binary
distribution. 
We report the data in a tensor of counts $U=(u_{i_1\cdots i_n})$ where
$u_{i_1\cdots i_n}$ is the number of times the event
$\{X_1=i_1,\ldots,X_n=i_n\}$ was observed. The sum of all elements in
$U$ is equal to $N$.  The log-likelihood function
$\ell:\,\Theta\longrightarrow \R$ is
\begin{equation}\label{eq:loglike}
\ell(\theta)\;=\; \sum_{i_1,\ldots,i_n=1}^2 u_{i_1\cdots i_n}\log(p_{i_1\cdots i_n}(\theta)),	
\end{equation}
where $p_{i_1\cdots i_n}(\theta)$ is as in (\ref{eq:paramMn}). In this
section we are interested in maximizing the log-likelihood function
over $\M_n$ to compute a maximum likelihood estimate (MLE) for the
data~$U$. We remark that $\ell(\theta)$ is a strictly concave function
on the entire $\Delta_{2^n-1}$, and if its unique maximizer over the
entire probability simplex is not in $\M_n$, then its maximizer over
$\M_n$, i.e. the MLE, must be on the boundary of $\M_n$.

In our analysis of boundary strata we restricted attention to
nondegenerate tensors in $\Delta_{2^n-1}$. The lemma below ensures that by looking at the data $U$ we can
detect when the MLE is going to lie in this nondegenerate part, and so, when we can apply Theorem \ref{thm-main1}. It implies that if the sample
proportions tensor $Q = \frac{1}{N} U$ lies outside the degenerate
part of $\Delta_{2^n-1}$, then the MLE $\hat P$ over $\M_n$ will also
be nondegenerate.
\begin{lemma}\label{lem:supp}
  Let $\M$ be a model in $\Delta_{k-1}$ for some $k\geq 1$ and let $Q
  = \frac{1}{N} U$ be the sample proportions for data $U$. Then if the
  MLE $\hat P$ for $U$ exists, the support of $Q$ is contained in the
  support of $\hat P$.
\end{lemma}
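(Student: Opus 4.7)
The plan is a short argument built directly from the analytic structure of the log-likelihood. First I would recall from \eqref{eq:loglike} that $\ell(\theta) = \sum_{i_1,\ldots,i_n} u_{i_1\cdots i_n} \log p_{i_1\cdots i_n}(\theta)$, with the standard conventions $0 \log 0 = 0$ and $c \log 0 = -\infty$ for $c > 0$. The key observation is then that $\ell(\theta)$ is finite if and only if $p_{i_1 \cdots i_n}(\theta) > 0$ for every multi-index with $u_{i_1 \cdots i_n} > 0$; equivalently, the support of $U$, which coincides with the support of $Q = U/N$, is contained in the support of the corresponding distribution. The statement of the lemma is therefore equivalent to showing that $\ell(\hat{P})$ is finite whenever the MLE exists.

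Next I would argue by contradiction: suppose there exists a multi-index $\mathbf{j}$ with $q_{\mathbf{j}} > 0$ but $\hat{p}_{\mathbf{j}} = 0$. Then $u_{\mathbf{j}} = N q_{\mathbf{j}} > 0$ forces $\ell(\hat{P}) = -\infty$. Since the MLE is assumed to exist as a genuine maximizer, there must be at least one $P \in \M$ with $\ell(P) > -\infty$; otherwise the likelihood would be identically $-\infty$ on $\M$ and every point of $\M$ would trivially be a ``maximizer'', in which case the MLE is not meaningfully defined and the lemma holds vacuously. For any such $P$, the inequality $\ell(P) > -\infty = \ell(\hat{P})$ contradicts the maximality of $\hat{P}$, ruling out the existence of such an index $\mathbf{j}$. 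The only delicate point in the argument, and really the only place where care is warranted, is the interpretation of ``the MLE exists'' so as to exclude the pathological case $\ell \equiv -\infty$ on $\M$; once this convention is fixed the conclusion follows immediately, so no substantive obstacle remains.
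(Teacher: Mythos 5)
Your argument is correct and is essentially the paper's own proof: both rest on the single observation that the log-likelihood equals $-\infty$ at any $P$ with $P_i=0$ for some $i\in\mathrm{supp}(Q)$, so a genuine maximizer cannot have such a zero. Your extra care about excluding the degenerate case $\ell\equiv-\infty$ on $\M$ is a reasonable clarification but does not change the route.
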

\begin{proof}
	The MLE is the constrained maximizer over $\M$ of the log-likelihood
	$$
	\sum_{i=1}^k u_i\log P_i\;=\;\sum_{i\in {\rm supp}(Q)}u_i \log P_i.
	$$
	It is equal to $-\infty$ at all points $P$ with $P_i=0$ for some $i \in {\rm supp}(Q)$. 
\end{proof}

\subsection{General results} \label{subsec:general}

In order to solve the optimization problem for the log-likelihood
(\ref{eq:loglike}), one can compute all critical points of
$\ell(\theta)$ over the interior and all boundary strata of
$\M_n$. For many parametrized statistical models the equations
defining these critical points are just rational functions in the
parameter vector $\theta$. This is the case for the latent class
models that we study in this paper.  We will call the number of {\bf
  complex} critical points of $\ell(\theta)$ over a model for generic
data $U$ the \emph{maximum likelihood degree} (ML-degree) of that
model. The ML-degree for general algebraic statistical models were
introduced in~\cite{CHKS} and~\cite{HKS}. In particular, it was shown
that the ML-degree of such a model is a stable number. We will use the
ML-degrees of the boundary strata of $\M_n$ as an indication for the
complexity of solving the maximum likelihood estimation problem. For
instance, if the ML-degree is $\leq 4$, then one can express the MLE
with closed form formulas as a function of $U$.  In particular, if the
ML-degree is equal to one, then the MLE can be expressed as a rational
function of $U$.
 
In order to solve the constrained optimization problem of maximizing the likelihood function, one can employ the following simple scheme:

\begin{itemize}
	\item[(a)] For each stratum $S$ of $\M_n$ list the critical points of the log-likelihood function constrained to its closure $\overline S$.
	\item[(b)] Pick the best point from the list of those critical points that lie in $\M_n$. 
\end{itemize}

Our first observation for this procedure is that we never need to
check all the strata to find a global maximum.  To see this consider
the poset of the boundary stratification as described in the previous
section.  In our search for the global maximum we start from the
maximal element of the poset and move recursively down.  If a global
maximum over the closure $\overline S$ lies in the stratum $S$, there
is no need to optimize over any stratum $S' \preceq S$. As shown
below, for many strata the MLE is guaranteed to lie inside $\M_n$.

A second observation is that maximizing the log-likelihood over most
of the strata is challenging. The defining constraints correspond to
complicated context specific independence
constraints~\cite{boutilier1996context}, and there is as yet no general
theory on how to optimize over these models exactly.  There
are, however, several exceptions including the strata considered in
Section~\ref{sec:stratification}. We begin by introducing
 notation used below: For the data tensor
$U=(u_{i_1\cdots i_n})$ denote by $U^{(s,t)}=(u^{(s,t)}_{i j})$ the
matrix whose $(i,j)$-th entry is the count
of the event $\{X_s=i,X_t=j\}$ and by $U^{(s)}=(u^{(s)}_{i})$ the vector whose entries are the counts of the event $\{X_s=i\}$. \\

\vskip 0.1cm
(a) \emph{Codimension one strata 
(type (2), $k=2n$)}. Each tensor on one of
these $2n$ strata corresponds to a context specific independence
model, such as where $X_2, \ldots, X_n$ are independent conditionally
on $\{X_1 = 1\}$. The ML-degree of the corresponding conditional model
is one; hence, the MLE is expressed as a rational function of the
data:
$$\hat p_{1j_2\cdots j_i\cdots j_n}\;\;=\;\;\frac{u_{1j_2}^{(1 2)}u_{1j_3}^{(1 3)}\cdots u_{1j_n}^{(1 n)}}{N {u_{1}^{(1)}}^{n-2}}.
$$
After normalization, the other slice is a tensor from the model
$\M_{n-1}$. Therefore $\hat p_{2j_2\cdots j_i \cdots j_n}$ can be
computed by employing any procedure that can be used for
$\M_{n-1}$. For instance, in the next section we derive a closed
form
formula for the maximizer on each boundary stratum of $\M_3$. Hence,  in the case of $\M_4$ all 
codimension one strata will also have closed form formulas.  \\

\vskip 0.1cm
(b) \emph{The exceptional codimension two strata of type (3)}. As 
noted in Section~\ref{sec:stratification}, these strata
correspond to simple graphical models over graphs like that in Figure
\ref{fig:star2}.  The ML-degree of this model is one; hence, the MLE
is expressed as a rational function of the data (also
see~\cite[Section~4.4.2]{Lauritzen}):
$$
\hat p_{j_1j_2\cdots j_i\cdots j_n}\;\;=\;\;\frac{u_{j_1j_2}^{(1 2)}u_{j_1j_3}^{(1 3)}\cdots u_{j_1j_n}^{(1 n)}}{N {u_{j_1}^{(1)}}^{n-2}}.
$$
This point is always guaranteed to lie in $\M_n$ and so we never have
to check strata that lie below that in the Hasse diagram defined in
the previous section.  These are the ${n \choose 2}$ strata of type
(4) and the type (5) stratum. Nevertheless, optimizing over these
special strata is simple so we describe it next.

\vskip 0.1cm
(c) \emph{The $(n+1)$-dimensional strata of type (4)}. These
strata correspond to graphical models with one edge and $n-2$
disconnected nodes.  The ML-degree of this model is one. For example,
if $1$ and $2$ are connected by an edge and all other nodes are
disconnected, the MLE is
$$
\hat p_{j_1j_2j_3\cdots j_n}\;\;=\;\;\frac{1}{N^{n-1}}u^{(12)}_{j_1j_2}u^{(3)}_{j_3}\cdots u^{(n)}_{j_n}.
$$

\vskip 0.1cm
(d) \emph{The $n$-dimensional stratum of type (5)}. This
stratum corresponds to the full-independence model and has ML-degree
one.  The MLE over this stratum is simply
$$
\hat p_{j_1j_2\cdots j_i\cdots j_n}\;\;=\;\;\frac{1}{N^n}u_{j_1}^{(1)}u_{j_2}^{(2)}\cdots u_{j_n}^{(n)}.
$$


%
There is one exceptional case, $n=3$, when \emph{all}
strata are defined by binomial equations, in which case the closure of
each stratum corresponds to a log-linear model. The MLE is therefore
uniquely given and can be easily computed.  We discuss this in more
detail in the following subsection.

\subsection{Maximum Likelihood Estimation for $\M_3$ } \label{subsec:MLE-M3}

The binary latent class model for three observed variables in the
probability simplex $\Delta_7$ is parametrized by
$$ p_{ijk} \, = \,  \lambda_1 a_{1i} b_{1j} c_{1k}  + \lambda_2  a_{2i} b_{2j} c_{2k}  $$
where 
$$ A = \left( \begin{array}{cc} a_{11} & a_{12} \\ a_{21} & a_{22} \end{array} \right) \,\,\,\, 
B = \left( \begin{array}{cc} b_{11} & b_{12} \\ b_{21} & b_{22} \end{array} \right) \,\,\,\, 
C = \left( \begin{array}{cc} c_{11} & c_{12} \\ c_{21} & c_{22} \end{array} \right),
$$
are stochastic matrices.  We will depict the resulting tensor $P$
as 
$$ \lambda_1 \left( \begin{array}{cc|cc} a_{11}b_{11}c_{11} & a_{11}b_{11}c_{12}  &  a_{12}b_{11}c_{11} & a_{12}b_{11}c_{12} \\   
a_{11}b_{12}c_{11} & a_{11}b_{12}c_{12}  &  a_{12}b_{12}c_{11} & a_{12}b_{12}c_{12} \end{array} \right) +    
\lambda_2 \left( \begin{array}{cc|cc} a_{21}b_{21}c_{21} & a_{21}b_{21}c_{22}  &  a_{22}b_{21}c_{21} & a_{22}b_{21}c_{22} \\   
a_{21}b_{22}c_{21} & a_{21}b_{22}c_{22}  &  a_{22}b_{22}c_{21} & a_{22}b_{22}c_{22} \end{array} \right).
$$   
\noindent $\M_{3}$ has dimension $7$ with the following stratification given by Theorem
\ref{thm-main1}, c.f. Figure \ref{fig:poset}. \\

\noindent
{\bf 1.} The interior of $\M_{3}$ has dimension $7$. Its Zariski closure is the linear space $\{ p \, : \, \sum p_{ijk} = 1 \}$. 
Its ML-degree is one and the MLE is computed by
$$ {\hat p}_{ijk} = \frac{ u_{ijk} }{u_{+++}}  \quad \quad  i,j,k = 1,2.$$

\noindent {\bf 2.} There are six $6$-dimensional strata. Each is
obtained as the image of those matrices where one entry in the first
row of $A$, $B$, or $C$ is set to $0$, such as $a_{11} = 0$. The
resulting tensor is of the form
$$ \lambda_1 \left( \begin{array}{cc|cc}  0 &   0  &  b_{11}c_{11} & b_{11}c_{12} \\   
0   &  0  &  b_{12}c_{11} & b_{12}c_{12} \end{array} \right) +    
\lambda_2 \left( \begin{array}{cc|cc} a_{21}b_{21}c_{21} & a_{21}b_{21}c_{22}  &  a_{22}b_{21}c_{21} & a_{22}b_{21}c_{22} \\   
a_{21}b_{22}c_{21} & a_{21}b_{22}c_{22}  &  a_{22}b_{22}c_{21} & a_{22}b_{22}c_{22} \end{array} \right).
$$  
Hence its first slice is a rank one matrix whereas its second slice is
generically a rank two matrix. The Zariski closure is defined by
$p_{111}p_{122} - p_{112}p_{121}$ (together with $\sum p_{ijk} -1$)
and forms a log-linear model. From the statistical point of view this
stratum corresponds to the context specific independence model, where
$X_2$ is independent of $X_3$ given $\{X_1=1\}$. Its ML-degree is one
and the MLE is computed by
$$ {\hat p}_{1jk} = \frac{ u_{1j+} \cdot u_{1+k}}{u_{1++} \cdot u_{+++}},   \quad \quad {\hat p}_{2jk} = \frac{ u_{2jk}}{u_{+++}}  \quad \quad j,k = 1,2.$$ 

\vskip 0.2cm
\noindent

\vskip 0.2cm
\noindent
There are 
fifteen
boundary strata of dimension $5$
arising from types (2) and (3). \\

{\bf 3a.} There are twelve strata of the first kind
arising as type (2) strata.
Each is obtained as the image of two types of parameters. The first type of parameters
has one entry in the first 
(or second) 
row of two matrix parameters equal to zero.
The canonical example is $a_{11} = 0$ and $b_{11} = 0$. The resulting
tensor is of the form
$$ \lambda_1 \left( \begin{array}{cc|cc}   0 &   0   &   0  &  0  \\   
0  & 0  &  c_{11} & c_{12} \end{array} \right) +    
\lambda_2 \left( \begin{array}{cc|cc} a_{21}b_{21}c_{21} & a_{21}b_{21}c_{22}  &  a_{22}b_{21}c_{21} & a_{22}b_{21}c_{22} \\   
a_{21}b_{22}c_{21} & a_{21}b_{22}c_{22}  &  a_{22}b_{22}c_{21} & a_{22}b_{22}c_{22} \end{array} \right).
$$  
The second type comes from parameters where one of the matrices has rank one. The corresponding example for the above 
boundary stratum is when $\rank(C) = 1$, in which case, $c_{11} = c_{21} = c$ 
and $c_{12} = c_{22} = \bar{c}$, since $C$ is a stochastic matrix. The resulting tensor is of the form
$$ \lambda_1 \left( \begin{array}{cc|cc} a_{11}b_{11}c & a_{11}b_{11} \bar{c}  &  a_{12}b_{11}c & a_{12}b_{11} \bar{c} \\   
a_{11}b_{12}c & a_{11}b_{12} \bar{c}  &  a_{12}b_{12}c & a_{12}b_{12} \bar{c} \end{array} \right) +    
\lambda_2 \left( \begin{array}{cc|cc} a_{21}b_{21}c & a_{21}b_{21} \bar{c}  &  a_{22}b_{21}c & a_{22}b_{21} \bar{c} \\   
a_{21}b_{22}c & a_{21}b_{22} \bar{c}  &  a_{22}b_{22}c & a_{22}b_{22} \bar{c} \end{array} \right).
$$   

Two (overlapping) slices of both of these tensors are rank one
matrices, namely, the slices corresponding to
$\left( \begin{array}{ccc} p_{111} & p_{112} \\ p_{121} &
    p_{122} \end{array} \right)$ and $\left( \begin{array}{ccc}
    p_{111} & p_{112} \\ p_{211} & p_{212} \end{array} \right)$.  The
Zariski closure is defined by the $2$-minors of
$\left( \begin{array}{ccc} p_{111} & p_{121} & p_{211} \\ p_{112} &
    p_{122} & p_{212} \end{array} \right)$, and it corresponds to two
context specific independence constraints.  Its ML-degree is one and
the MLE is computed by
$$ {\hat p}_{ijk} = \frac{ u_{ij+} \cdot ( u_{++k} - u_{22k})}{ (u_{+++} - u_{221} - u_{222}) \cdot u_{+++} }   \quad ijk \neq 221, 222 \quad \quad
{\hat p}_{22k} = \frac{ u_{22k}}{u_{+++}} \quad k = 1,2 $$
{\bf 3b.} There are three of the second kind (type (3)). Each comes from parameters where
one of the matrices $A,B,C$ is the identity matrix. 
The canonical
example is $a_{12} = 0$ and $a_{21} = 0$. The resulting tensor is of
the form
$$     
\lambda_1 \left( \begin{array}{cc|cc} b_{11}c_{11} & b_{11}c_{12}  &  0  & 0 \\   
b_{12}c_{11} & b_{12}c_{12}  &   0  & 0  \end{array} \right) + \lambda_2 \left( \begin{array}{cc|cc}  0 &  0   &  b_{21}c_{21} & b_{21}c_{22} \\   
0  & 0   &  b_{22}c_{21} & b_{22}c_{22} \end{array} \right).
$$  
Two parallel
slices of these tensors are each rank one matrices, namely, the slices
corresponding to $\left( \begin{array}{ccc} p_{111} & p_{112} \\
    p_{121} & p_{122} \end{array} \right)$ and
$\left( \begin{array}{ccc} p_{211} & p_{212} \\ p_{212} &
    p_{222} \end{array} \right)$.  The Zariski closure is defined by
$p_{111}p_{122} - p_{112}p_{121}$ and $p_{211}p_{222} -
p_{212}p_{221}$, and it corresponds to conditional independence of
$X_2$ and $X_3$ given $X_1$. As indicated in the end of Section~
\ref{sec:stratification}, the ML degree is one and the MLE is computed
by
$$ {\hat p}_{ijk} = \frac{ u_{ij+} \cdot u_{i+k}}{ u_{i++} \cdot u_{+++}} \quad i,j,k = 1,2$$

\vskip 0.2cm
\noindent {\bf 4a.} There are eight $4$-dimensional strata of type (2).
They are defined by the image of matrices where the same entry of the
top row of $A$, $B$, $C$ is zero.
The canonical example is $a_{11}
= b_{11} = c_{11} = 0$. The resulting tensor is of the form
$$ \lambda_1 \left( \begin{array}{cc|cc}  0 &   0  &  0  &  0\\   
0   &  0  &  0  & 1 \end{array} \right) +    
\lambda_2 \left( \begin{array}{cc|cc} a_{21}b_{21}c_{21} & a_{21}b_{21}c_{22}  &  a_{22}b_{21}c_{21} & a_{22}b_{21}c_{22} \\   
a_{21}b_{22}c_{21} & a_{21}b_{22}c_{22}  &  a_{22}b_{22}c_{21} & a_{22}b_{22}c_{22} \end{array} \right).
$$  
The Zariski closure is also a log-linear model whose design matrix $A$ can be chosen to be
$$ A = \left( \begin{array}{rrrrrrrr} 1 & 1 & 0 & 0 & 0 & 0 & -1 & 0 \\
                                                      0 & 0 & 1 & 1 & 0 & 0 & 1  & 0 \\
                                                      0 & 0 & 0 & 0 & 1 & 1 & 1 & 0 \\
                                                      1 & 0 & 1 & 0 & 1  & 0 & 1 & 0 \\
                                                      0 & 1 & 0 & 1 & 0 & 1 &  0 & 0 \\
                                                      0 & 0 & 0 & 0 & 0 & 0 &  0 & 1  \end{array} \right)$$
where the columns correspond to $p_{111}, p_{112}, p_{121}, p_{122}, p_{211}, p_{212}, p_{221}, p_{222}$.
The defining equations are given by the ideal 
$$ I_2 \left( \begin{array}{ccc} p_{111} & p_{121} & p_{211} \\ p_{112} & p_{122} & p_{212} \end{array} \right) + 
     I_2 \left( \begin{array}{cc}   p_{111} & p_{121}  \\ p_{112} & p_{122} \\ p_{211} & p_{221} \end{array} \right)$$
which is minimally generated by five quadrics. The ML-degree is two and the MLE is computed  by choosing one of the two solutions 
obtained as follows.
First ${\hat p}_{222} = \frac{u_{222}}{u_{+++}}$.
Then let 
 \begin{align*}   
\alpha &=  \frac{ u_{111} + u_{112} - u_{221} }{ u_{+++}} &
\beta  &=   \frac{ u_{121} + u_{122} + u_{221} }{ u_{+++}} \\
\gamma  &=   \frac{ u_{211} + u_{212} + u_{221} }{ u_{+++}} &
\delta  &=   \frac{ u_{111} + u_{121} + u_{211} + u_{221} }{ u_{+++}} .
\end{align*}
Then for each root  ${\hat p}_{221}$ of
$$ \delta p_{221}^2  -[(\alpha+\gamma)(\alpha+\beta) + \delta(\gamma+\beta)] p_{221} + \beta\gamma\delta  = 0$$
compute 
\begin{align*}
{\hat p}_{212} &=  \frac{\delta}{\alpha+\gamma} {\hat p}_{221}  + \left( \gamma - \frac{\gamma\delta}{\alpha+\gamma}  \right) \\
{\hat p}_{211} &= -{\hat p}_{212} - {\hat p}_{221} + \gamma \\
{\hat p}_{122} &=  \frac{\delta}{\alpha+\beta} {\hat p}_{221}  + \left( \beta - \frac{\beta\delta}{\alpha+\beta}  \right) \\
{\hat p}_{121} &= -{\hat p}_{122} - {\hat p}_{221} + \beta \\
{\hat p}_{111} &= -{\hat p}_{121} - {\hat p}_{211} - {\hat p}_{221} + \delta \\
{\hat p}_{112} &= -{\hat p}_{111} + {\hat p}_{221} + \alpha.
\end{align*} 
Note that the computations should be done in the exact order given above.  \\

\vskip 0.2cm
\noindent {\bf 4b.} There are three $4$-dimensional strata of type
(4). They are obtained by letting one of the matrices $A$, $B$, $C$ 
have rank one.  A canonical example is where $a_{21}=a_{11}=a$,
$a_{22}=a_{12}= \bar{a}$. The resulting tensor is of the form
$$ \lambda_1 \left( \begin{array}{cc|cc} a b_{11}c_{11} & a b_{11}c_{12}  &  \bar{a} b_{11}c_{11} & \bar{a} b_{11}c_{12} \\   
a b_{12}c_{11} & a b_{12}c_{12}  &  \bar{a} b_{12}c_{11} & \bar{a} b_{12}c_{12} \end{array} \right) +    
\lambda_2 \left( \begin{array}{cc|cc} a b_{21}c_{21} & a b_{21}c_{22}  &  \bar{a} b_{21}c_{21} & \bar{a} b_{21}c_{22} \\   
a b_{22}c_{21} & a b_{22}c_{22}  &  \bar{a} b_{22}c_{21} & \bar{a} b_{22}c_{22} \end{array} \right)
$$   
As indicated in the end of Section~
\ref{sec:stratification}, the ML degree is one and the MLE is computed by 
$$ {\hat p}_{ijk} = \frac{u_{ij+} \cdot u_{++k}}{u_{+++}^2}$$
\vskip 0.2cm

\noindent
{\bf 5.} There is one stratum of dimension three formed by rank one tensors, also known as the independence model on 
three binary random variables. This is a toric model and has ML degree one where the ML estimate is computed by
$$ {\hat p}_{ijk} = \frac{ u_{i++} \cdot u_{+j+} \cdot u_{++k} }{ u_{+++}^3}.$$

\subsection{Simulations}
The exact maximum likelihood estimation for $\M_3$ gives us valuable
insight into the geometry of the likelihood function for the latent
class models. In this subsection
we report on simulations that were designed to unearth this geometry. We also obtain a new perspective into the performance of the EM-algorithm.\\

\vskip 0.1cm
\noindent (a) We say that a point $P \in \Delta_7$ lies in the
attraction basin of a stratum $S$, if, given that the sample
proportions tensor is $P$, the global maximum of the likelihood
function lies in $S$.  In our first simulation we approximate the
relative volumes of the attraction basins of each stratum.  Attraction
basins for strata of type (4) and (5) are lower dimensional and so
have volume zero.

We run $10^6$ iterations, each time sampling $P$ uniformly from
$\Delta_7$ and then sampling data of size $N=1000$ from $P$.  We use
the resulting data tensor to find the MLE. Table \ref{tab:random}
reports our findings.  In $8.38\%$ of cases, the MLE lies in the
interior of $\M_3$.
Quite interestingly, the fifteen 5-dimensional strata attract almost
$50\%$ of the points. In particular, the three special strata of type
(3) attract $17.29\%$ of the points so each of them attracts
approximately $6\%$. This is almost as much as the interior
attracts, and virtually the same as each codimension one
stratum. Since we are trying to estimate the attraction basin
volumes, we omitted the strata of type (4) and (5) from the table. 
In principle, an attraction basin of zero measure could still contain points
that correspond to tables with integer entries, leading to a positive 
probability of the MLE lying on the stratum for data generated as counts.
However, this did not happen in any of our simulations for Table \ref{tab:random}.

\begin{table}[h]
\caption{Relative volume of MLE attraction basins of strata in $\M_3$ for data uniformly distributed over $\Delta_{7}$. 
}\label{tab:random}
\begin{center}
\begin{tabular}{|c|c|c|c|c|}
\hline
 \mbox{$1\times$ $7$-dim} & \mbox{$6\times$ $6$-dim}  & \mbox{$12\times$ $5a$-dim} & \mbox{$3\times$ $5b$-dim} & \mbox{$8\times$ $4a$-dim}  \\ \hline \hline
8.38 & 36.24 & 29.75 & 17.29 & 8.34  \\ \hline
\end{tabular}
\end{center}
\end{table}
 
The fact that codimension two strata attract more points than the
interior and the codimension one strata together may be a bit
counterintuitive at first, but follows directly from the geometry of
the model. The log-likelihood function is a strictly concave function
over $\Delta_7$ with the unique maximum given by the sample
proportions. Its level sets are convex and centered around the sample
proportions $Q=\frac{1}{N} U$. On the other hand, $\M_3$ is highly
concave, as illustrated by its 3-dimensional linear section~in Figure
1 of~\cite{ARSZ}.  It is then natural to expect that lower-dimensional
strata have higher probability of containing the global maximum as
long as the sample proportions
lie outside of $\M_3$. In the next simulation, we argue that this is not a desirable feature of the latent class model.  \\

\vskip 0.1cm
\noindent (b) Suppose that the true data generating distribution lies in $\M_3$ and the corresponding parameters are $\lambda_1=\lambda_2=\frac{1}{2}$ and
$$
A=B=C=\begin{bmatrix}
1-\epsilon & \epsilon\\
\epsilon & 	1-\epsilon
\end{bmatrix}\qquad \mbox{for }\epsilon\in (0,0.5].
$$
If $\epsilon$ is small, all variables are closely correlated with the
unobserved variable. On the other extreme, if $\epsilon=0.5$, all
variables are independent of the unobserved variable.  We generate $N$
samples from the given distribution with a fixed $\epsilon$ and
compute the MLE, repeating this 10,000 times.  We start with $N=1000$
which is a large number for such a small contingency table. In Table
\ref{tab:epsilon} we see that for $\epsilon$ close to $0.5$ the
probability of the MLE landing in the interior of $\M_3$ is small
despite the fact that the data generating distribution lies in the
model and that $N$ is very large. This means that, even when the data
generating distribution lies in the model, with high probability we
can expect estimates to lie on the boundary. This obviously becomes more dramatic
for smaller values of $N=100$ and $N=50$, see Table \ref{tab:epsilon2}
and Table \ref{tab:epsilon3} respectively.  In the last case, even for
small values of $\epsilon$, there is a high probability of hitting the
boundary. This shows that the latent class models must be used with
caution, especially if correlations between variables are small and
the sample size is relatively small.  Finally, we note that in
producing the last row of Table \ref{tab:epsilon3} we observed some
MLEs on the $3$-dimensional strata of rank one tensors. This happens when the data tensor has rank one. Because 
these MLEs are also MLEs over the strata 5b, we report them there.

\begin{table}[h]
\caption{Relative volume of MLE attraction basins of strata in $\M_3$
for the special generating distributions given by $\epsilon$. Sample size $N=1000$, number of iterations $10000$.}\label{tab:epsilon}
\begin{center}
\begin{tabular}{|c|c|c|c|c|c|}
\hline
\mbox{$\epsilon$} & \mbox{$1\times$ $7$-dim} & \mbox{$6\times$ $6$-dim}  & \mbox{$12\times$ $5a$-dim} & \mbox{$3\times$ $5b$-dim} & \mbox{$8\times$ $4a$-dim}  \\ \hline \hline
0.5 & 12.02 & 47.59 & 22.09 & 13.06 & 5.24  \\ \hline
0.4 & 34.52 & 43.87 & 12.13 & 7.94 & 1.54  \\ \hline
0.3 & 99.32 & 0.67 & 0.01 & 0.00 & 0.00  \\ \hline
0.2 & 100 & 0 & 0 & 0 & 0  \\ \hline
0.1 & 100 & 0 & 0 & 0 & 0  \\ \hline
\end{tabular}
\end{center}
\end{table}

\begin{table}[h]
\caption{
Same as in Table \ref{tab:epsilon} but with sample size $N=100$.}\label{tab:epsilon2}
\begin{center}
\begin{tabular}{|c|c|c|c|c|c|}
\hline
\mbox{$\epsilon$} & \mbox{$1\times$ $7$-dim} & \mbox{$6\times$ $6$-dim}  & \mbox{$12\times$ $5a$-dim} & \mbox{$3\times$ $5b$-dim} & \mbox{$8\times$ $4a$-dim}  \\ \hline \hline
0.5 & 10.72 & 45.97 & 22.92 & 14.35 & 6.04  \\ \hline
0.4 & 12.15 & 46.07 & 21.29 & 15.27 & 5.22  \\ \hline
0.3 & 38.00 & 43.62 & 10.84 & 6.36 & 1.18 \\ \hline
0.2 & 80.53 & 17.92 & 1.60 & 0.32 & 0.03 \\ \hline
0.1 & 90.02 & 9.54 & 0.3 & 0.13 & 0.01 \\ \hline
\end{tabular}
\end{center}
\end{table}

\begin{table}[h]
\caption{
Same as in Table \ref{tab:epsilon} but with sample size $N=50$.}\label{tab:epsilon3}
\begin{center}
\begin{tabular}{|c|c|c|c|c|c|}
\hline
\mbox{$\epsilon$} & \mbox{$1\times$ $7$-dim} & \mbox{$6\times$ $6$-dim}  & \mbox{$12\times$ $5a$-dim} & \mbox{$3\times$ $5b$-dim} & \mbox{$8\times$ $4a$-dim} \\ \hline \hline
0.5 & 10.52 & 45.74 & 23.33 & 14.3 & 6.06 \\ \hline
0.4 & 10.83 & 45.24 & 23.36 & 14.67 & 5.90  \\ \hline
0.3 & 21.59 & 47.16 & 17.11 & 11.49 & 2.65 \\ \hline
0.2 & 51.84 & 38.72 & 5.87 & 3.25 & 0.32 \\ \hline
0.1 & 48.59 & 39.37 & 8.33 & 2.42 & 1.29  \\ \hline
\end{tabular}
\end{center}
\end{table}

\vskip 0.1cm
\noindent (c) From the practical point of view it is of interest to study the performance of the EM algorithm, 
for which no realistic global convergence guarantees are known; see \cite{balakrishnan2017} for a more detailed description of the problem. 
In our simulations to this end, we first generate our data in the same scenario 
as above with $\lambda_1=\lambda_2=\frac{1}{2}$, $\epsilon=0.1,\ldots,0.5$, and for sample sizes $N=50,100,1000$. 
We report how many times the EM algorithm was not able to find the global optimum in less than $10$ reruns. 
Given how simple and low-dimensional the model is, we think of $10$ reruns already as a big number. Our main findings are summarized in Figure~\ref{fig:sfig1}. 
When the sample size is large ($N=1000$) this proportion is small if $\epsilon=0.02, 0.05, 0.1,0.2,0.3$. However, for higher $\epsilon$  in more than half cases 
the EM algorithm was not able to find the global optimum. If $N=50$ the results are even more interesting. Note that for $\epsilon=0.4, 0.5$ the situation 
actually looks better than for $N=1000$. This is somewhat counterintuitive at first but easy to explain. High values of $\epsilon$ correspond to ill-behaved 
distributions (close to singularities). If $N$ is very high, the sample distribution lies close, and hence it is also ill-behaved, resulting in a complicated likelihood function.
If the sample size is small, the variance of the sample distribution is much higher, so with relatively high probability the sample distribution will be far 
and better-behaved. In other words, if the correlations between variables are really small, smaller samples may lead to a better-behaved likelihood than big samples. 
For completeness of our discussion we repeat the same computations for a less symmetric set-up where $\lambda_1=\frac{1}{5}$ but the results were very similar and will not be reported here. 

\begin{figure}\label{fig:sfig1}
  \centering
    \includegraphics[width=0.6\linewidth]{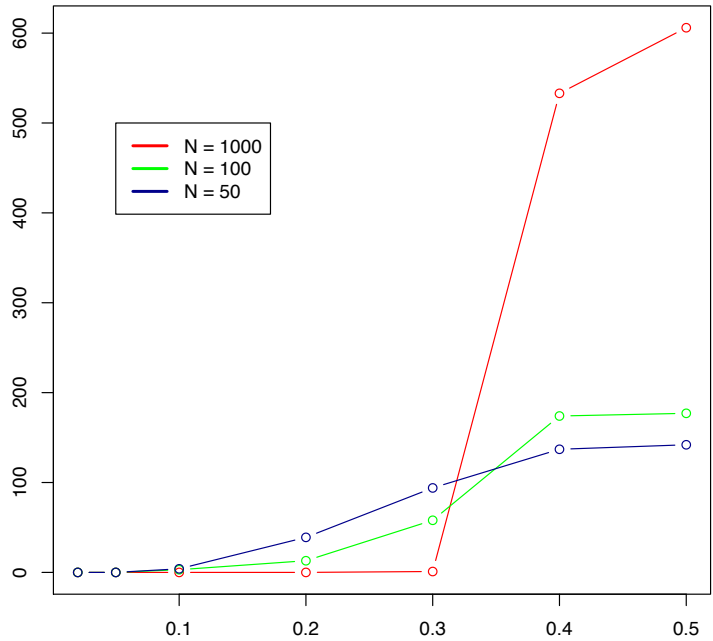}
\caption{The numbers of non-convergers in the EM-algorithm for 1000 experiments and 10 reruns of the 
EM algorithm for each experiment depending on the parameters defining the data-generating distribution. The $x$-axis displays values of $\epsilon$.}
\end{figure}

\subsection{EM attraction basins for $3 \times 3 \times 2$ tensors of $\rank_+ \leq  3$}\label{ss:hector}
We do not have a complete description of the boundary strata for tensors of nonnegative
rank 3, nor formulas for MLEs.  Thus, we present the results of our simulations for $3 \times 3 \times 2$ tensors with $\rank_+ \leq 3$,
denoting the matrix parameters  of appropriate format by $A$, $B$, and $C$.
This model, denoted by $\mathcal M^*$, is a 
full-dimensional, proper subset of $\Delta_{17}$ (i.e. with Zariski closure the full ambient space as in the case of $\mathcal M_3$).
We are interested in giving an estimate for the relative volume of $\mathcal M^*$ in $\Delta_{17}$ and 
in obtaining some preliminary understanding of attraction basins
for distributions sampled from $\Delta_{17}$ under EM. 

We performed two tests. For an arbitrary distribution $P \in \Delta_{17}$,
we ran EM from ten different starting points, recorded the parameters
$\theta_0 \in \Theta$  to which EM converged, and took the optimal estimate.  
Without a full description of the
boundary strata of $\mathcal M^*$, we classified the EM estimate
into four categories:  1) the EM estimate $\theta_0$ contains
strictly positive entries; 2) the EM estimate $\theta_0$ contains
exactly one zero entry in a $3\times 3$ stochastic matrix; 3) the EM estimate $\theta_0$ contains
exactly one zero entry in the $3\times 2$ stochastic matrix; 
4)  the EM estimate $\theta_0$ contains
exactly $k$ zero entries for $k\in\{2,...,11\}$.  The idea is that the numbers of EM estimates per category give approximations to which
points of $\Theta$, either interior or on a boundary face of $\Theta$,
the EM estimates are drawn.   Concretely, these numbers are used to estimate respectively
\begin{enumerate}
	
	\item the relative volume of $\mathcal M^* \subsetneq \Delta_{17}$; 
	
	\item the EM attraction basin proportion for the 6 irreducible components of the algebraic boundary given by a single zero in a $3\times 3$ stochastic matrix
	$A$ or $B$;
	
	\item the EM attraction basin proportion for the 2 irreducible components of the algebraic boundary given by a single zero in the $3\times 2$ stochastic matrix $C$;
	
	\item   the EM attraction basin proportions for intersections of $k$ facets of  $\Theta$.
	
\end{enumerate}

For $10^6$ iterations, the proportions (given as percentages) of these EM attraction
basins are given in Table \ref{tb:emM*1}, where  $1a$-codim corresponds to the relative 
volume of category (2) and $1b$-codim to  the relative volume of category (3).
We separated categories (2) and (3), since
(3) corresponds to a context specific independence model, but (2) does not.

We note that the highest concentration of estimates is in the  faces of $\Theta$ of codimension $4$,
though we have no insight as to why this is the case.  Also the relative volume of $\mathcal M^* \subsetneq \Delta_{17}$,
filling out only approximately $.019\%$ of $\Delta_{17}$ is remarkably small, particularly when compared to relative
volume estimates for $\M_3$ and $\M_{3,3}$.

As a second test, we ran EM for the same $P \in \Delta_{17}$, but with
$10^4$ different starting points.    As expected, EM converged to many local
optima on the nonconvex $\M^*$, with a majority (almost 76\%) in the codim-4 stratum.



\begin{table}[h]
	\caption{Relative volume of EM attraction basins of strata in $\M^*$ using 10 different starting parameters for $10^6$  uniformly distributed points over $\Delta_{17}$.\\ ~}\label{tb:emM*1}
	\begin{center}
		\begin{tabular}{|c|c|c|c|c|c| }
			\hline
			\mbox{$0$-codim} & \mbox{$1a$-codim}  & \mbox{$1b$-codim} & \mbox{$2$-codim}  & \mbox{$3$-codim}& \mbox{$4$-codim}  \\ \hline \hline
			0.019 & 0.2845 &0.0621 &3.4814 &17.0098 & 40.1676    \\ \hline
		\end{tabular}
	\end{center}
	\begin{center}
		\begin{tabular}{|c|c|c|c|c|c|c|c|}
			\hline
			\mbox{$5$-codim} & \mbox{$6$-codim}  & \mbox{$7$-codim} & \mbox{$8$-codim} & \mbox{$9$-codim} & \mbox{$10$-codim}  & \mbox{$11$-codim}  \\ \hline \hline
			25.7120 & 11.2486 & 1.7677 & 0.2249&0.0199&0.0025&0 \\ \hline
		\end{tabular}
	\end{center}
\end{table}

\section{EM fixed point ideals} \label{sec:EM-fixed}

It is well-known that the EM algorithm does not guarantee convergence 
to the global optimum of the likelihood function.
In this section, we study the EM fixed
point ideal introduced in \cite{KRS} that eliminates this
drawback. An EM fixed point for an observed data tensor $U$ is
a parameter vector in $\Theta$ which stays fixed after one iteration
of the EM algorithm. The set of EM fixed points includes the candidates
for the global maxima of the likelihood function; see Lemma
\ref{lemma:EM_fixed_points}. The solution set of the EM fixed point ideal contains all the EM fixed points, in particular, all the global maxima of the
likelihood function. Hence, computing the solution set of the EM fixed ideal allows the computation of all the global maxima for the
likelihood function.  Moreover, for a given model $\M$, the EM fixed
point ideal consists of the equations defining all EM fixed points for
\emph{any} data tensor $U$. Therefore, for a given $\M$, it has to be computed  only once. 
After this computationally intensive task, extracting the MLE for any given data tensor $U$ is relatively easy.

After first describing the equations of the EM fixed point
ideal for $\M_3$ in Proposition \ref{defeqs}, we present the full
prime decomposition of this ideal in Theorem \ref{main}. We
illustrate two uses of this decomposition. First, we
show that using the components of the prime decomposition one can automatically recover the formulas for the
maximum likelihood estimator for various strata that we presented in
Section \ref{subsec:MLE-M3}. 
Second, we point out that the relevant components of
this decomposition that contain entries of stochastic parameter matrices
correspond precisely to the boundary strata of $\M_3$, also reported
in Section \ref{subsec:MLE-M3}. This hints at the usefulness of the EM
fixed point ideal for the discovery of such boundary strata. In fact,
we showcase this discovery process by computing the decomposition of the EM fixed
point ideal of $\M_{3,3}$.  The components we get give the boundary
stratification of $\M_{3,3}$ as reported in \cite{SM17}.

We present a version of the EM algorithm adapted to latent class
models with three observed variables in
Algorithm~\ref{algorithm:EM}.  We no longer assume that the observed
or hidden variables are binary. We let $X_1,X_2,X_3$ be the observed
random variables with $d_1,d_2,d_3$ states, respectively, and we
assume that the hidden variable takes values in $\{1,\ldots,r\}$. We
denote this model by $\M_{d_1 \times d_2 \times d_3,r}$. Our
presentation is based on~\cite[Section 1.3]{pachter2005algebraic}
and~\cite[Algorithm 1]{KRS}.

\begin{algorithm}[H]
\caption{EM algorithm for the latent class model with three observed variables}
\textbf{Input}: Observed data tensor $U \in \mathbb{Z}^{d_1 \times d_2 \times d_3}$. \\
\textbf{Output}: A proposed maximum $\hat{P} \in \Delta_{d_1d_2d_3-1}$ of the log-likelihood function $\ell$ on the model $\mathcal{M}_{d_1 \times d_2 \times d_3,r}$. \\
\vspace{0.2cm}
\textbf{Step 0}: Initialize randomly $(\lambda_1,\ldots,\lambda_r) \in \Delta_{r-1}$, $(a_{i1},\ldots,a_{id_1}) \in \Delta_{d_1-1}$,  $(b_{i1},\ldots,b_{id_2}) \in \Delta_{d_2-1}$, and $(c_{i1},\ldots,c_{id_3}) \in \Delta_{d_3-1}$ for $i=1,\ldots, r$.\\
Run the E-step and M-step until the entries of $P \in \Delta_{d_1d_2d_3-1}$ converge. \\
\textbf{E-Step}: \textit{Estimate the hidden data:}\\
\hspace*{1.5cm} Set $v_{lijk} := \frac{\lambda_l a_{li} b_{lj} c_{lk}}{\sum_{l = 1}^r \lambda_l a_{li} b_{lj} c_{lk}} u_{ijk}$ for $l=1,\ldots,r$, $i = 1, \ldots, d_1$, $j = 1, \ldots, d_2$, and $k = 1,\ldots,d_3$.\\
\textbf{M-Step}: \textit{Maximize the log-likelihood function of the model with complete data using the estimates for the hidden data from the E-step:} \\
\hspace*{1.5cm} Set $\lambda_l := \sum_{i=1}^{d_1} \sum_{i=1}^{d_2} \sum_{i=1}^{d_3} v_{ijkl}/u_{+++}$ for $l = 1, ..., r$. \\
\hspace*{1.5cm} Set $a_{li} := \sum_{j=1}^{d_2} \sum_{k=1}^{d_3} v_{ijkl}/(u_{+++} \lambda_l)$ for $l = 1,\ldots,r$, $i = 1,\ldots,d_1$. \\
\hspace*{1.5cm} Set $b_{lj} := \sum_{i=1}^{d_1} \sum_{k=1}^{d_3} v_{ijkl}/(u_{+++} \lambda_l)$ for $l = 1,\ldots,r$, $j = 1,\ldots,d_2$. \\
\hspace*{1.5cm} Set $c_{lk} := \sum_{i=1}^{d_1} \sum_{j=1}^{d_2} v_{ijkl}/(u_{+++} \lambda_l)$ for $l = 1,\ldots,r$, $k = 1,\ldots,d_3$. \\
\textbf{Update} \textit{the joint distribution for the latent class model:} \\
\hspace*{1.5cm} Set $p_{ijk} := \sum_{l=1}^r \lambda_l a_{li} b_{lj} c_{lk}$ for $i = 1, \ldots, d_1$, $j = 1,\ldots,d_2$, $k=1,\ldots,d_3$.\\
Return $P$.
\label{algorithm:EM}
\end{algorithm}

An \emph{EM fixed point} for an observed data tensor $U$ is an element of  
 $\Theta:=\Delta_{r-1}\times (\Delta_{d_1-1})^r \times (\Delta_{d_2-1})^r \times (\Delta_{d_3-1})^r $ which stays fixed 
 after one E-step and M-step of the EM-algorithm with the input $U$.

\begin{lemma}\label{lemma:EM_fixed_points}
Any $\theta \in \Theta$ to which the EM-algorithm can converge is an EM fixed point.
\end{lemma}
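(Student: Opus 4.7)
The plan is to package one full iteration of the EM algorithm (one E-step followed by one M-step followed by the update of $P$) as a map $T\colon \Theta\to\Theta$, and then to show that any limit point of the iteration $\theta^{(t+1)}=T(\theta^{(t)})$ must satisfy $T(\theta^*)=\theta^*$, which is exactly the definition of an EM fixed point. The key observation is that $T$ is a composition of sums, products, and quotients whose denominators are polynomial expressions of the form $\sum_l \lambda_l a_{li}b_{lj}c_{lk}$ and $u_{+++}\lambda_l$; in particular $T$ is a rational map whose formula is continuous wherever these denominators are nonzero.

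The first step is to record explicitly that the E-step assignment $v_{lijk}=\lambda_l a_{li}b_{lj}c_{lk}\, u_{ijk}/p_{ijk}(\theta)$ and the subsequent M-step updates for $\lambda_l$, $a_{li}$, $b_{lj}$, $c_{lk}$ are continuous functions of $\theta$ on the open subset of $\Theta$ where $p_{ijk}(\theta)\neq 0$ for every cell $(i,j,k)$ with $u_{ijk}>0$ and where $\lambda_l>0$ for every $l$. By Lemma~\ref{lem:supp}, any EM sequence that has a chance of converging to an MLE stays inside this set: if $p_{ijk}(\theta^{(t)})=0$ at a cell where $u_{ijk}>0$, then the log-likelihood is $-\infty$ and the E-step is not even defined. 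So we may restrict to the domain on which $T$ is continuous.

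The second step is the standard continuity argument. Suppose the EM iterates $\theta^{(t)}=T(\theta^{(t-1)})$ converge to some $\theta^*\in\Theta$ in the domain of $T$. Then $\theta^{(t+1)}=T(\theta^{(t)})\to T(\theta^*)$ by continuity, but also $\theta^{(t+1)}\to\theta^*$, so $T(\theta^*)=\theta^*$. The conclusion is that $\theta^*$ is fixed by one full E-step and M-step, which is precisely the definition of an EM fixed point.

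The main obstacle, and the only subtlety worth treating carefully in the write-up, is the boundary case in which $\theta^*$ lies on a stratum where some $\lambda_l$ or some $p_{ijk}(\theta^*)$ vanishes; here the E-step formula superficially involves $0/0$. The way to handle this is to observe that the product $\lambda_l a_{li}b_{lj}c_{lk}/p_{ijk}(\theta)$ that appears in $v_{lijk}$ is well-defined and continuous on the set $\{p_{ijk}(\theta)>0\text{ whenever }u_{ijk}>0\}$ even if individual $\lambda_l$'s vanish, because the numerator then vanishes as well and the ratio extends continuously (it is simply the conditional probability $\Pr(H=l\mid X=(i,j,k);\theta)$, set to any fixed convention when $u_{ijk}=0$). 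One then verifies that the M-step formulas extend continuously across any stratum that the EM iterates can actually approach without the likelihood dropping to $-\infty$, so the continuity argument of the previous paragraph still applies and $T(\theta^*)=\theta^*$.
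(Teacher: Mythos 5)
Your proof is correct and follows essentially the same route as the paper: both arguments package one EM iteration as a map $T$ on $\Theta$ and conclude $T(\theta^*)=\theta^*$ from continuity of $T$ along a convergent sequence of iterates. The paper simply asserts continuity of the one-step map, whereas you additionally delimit the domain on which the E-step and M-step formulas are well defined; this is a worthwhile refinement but not a different approach.
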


\begin{proof}
Denote the function defined by one step of the EM-algorithm by $\EM(\cdot)$. 
Pick an initial point $\theta^{(0)} \in \Theta$,
and let $\theta^{(k+1)} = \EM(\theta^{(k)})$.  
Assuming that $\theta := \lim_{k \rightarrow \infty} \theta^{(k)}$ exists, then 
$\lim_k \theta^{(k+1)} = \lim_k \EM(\theta^{(k)})$, and since
$\EM$ is continuous, we obtain 
$\theta = \EM(\theta)$.
\end{proof}

Lemma~\ref{lemma:EM_fixed_points} justifies the study of the set of
the EM fixed points as this set contains all possible outputs of the EM
algorithm.  In \cite[Section 3]{KRS}, the set of all EM fixed points
of a latent class model with two observed variables is studied through
the minimal set of polynomial equations that they satisfy. These
equations are called the \emph{EM fixed point equations}.

\begin{proposition} 
\label{defeqs}
The EM fixed point equations for $2 \times 2 \times 2$-tensors of
rank$_+ \leq 2$ on the parameter space $\Theta $ are
\begin{flalign*} a_{\ell i} \left( \sum_{j,k = 1}^2 r_{ijk} b_{\ell j}
    c_{\ell k} \right) &= 0 \hspace{1cm} \mbox{ for all } \;\;\; \ell,
  i = 1, 2, \\ b_{\ell j} \left( \sum_{i,k = 1}^2 r_{ijk} a_{\ell i}
    c_{\ell k} \right) &= 0 \hspace{1cm} \mbox{ for all } \;\;\; \ell,
  j = 1, 2, \\ c_{\ell k} \left( \sum_{i,j = 1}^2 r_{ijk} a_{\ell i}
    b_{\ell j} \right) &= 0 \hspace{1cm} \mbox{ for all } \;\;\; \ell,
  k = 1, 2,
\end{flalign*}
where $[r_{ijk}] = \left[u_{+++} - \frac{u_{ijk}}{p_{ijk}}\right]$.
\end{proposition}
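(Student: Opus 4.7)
The plan is to unwind the EM iteration described in Algorithm~\ref{algorithm:EM} and impose, coordinate by coordinate, that the updated parameters agree with the original ones. Substituting the E-step $v_{\ell ijk} = \lambda_\ell a_{\ell i} b_{\ell j} c_{\ell k} u_{ijk}/p_{ijk}$ into each M-step formula produces a rational expression in $\theta$; setting this expression equal to the input value gives, after clearing a single denominator, a polynomial-type identity that reorganizes into the equations stated in the proposition.

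Concretely, I would first handle the $a_{\ell i}$ update. The fixed point condition $a_{\ell i} = \sum_{j,k} v_{\ell ijk}/(u_{+++}\lambda_\ell)$ becomes, after multiplying by $u_{+++}\lambda_\ell$,
\[
u_{+++}\, \lambda_\ell\, a_{\ell i} \;=\; \lambda_\ell\, a_{\ell i} \sum_{j,k} \frac{u_{ijk}\, b_{\ell j} c_{\ell k}}{p_{ijk}}.
\]
Replacing $u_{+++}$ by $u_{+++}\sum_{j,k} b_{\ell j} c_{\ell k}$ (which is legitimate because $\sum_j b_{\ell j}=\sum_k c_{\ell k}=1$ on $\Theta$) and collecting terms, the identity above rewrites as
\[
\lambda_\ell\, a_{\ell i} \sum_{j,k} \Bigl(u_{+++} - \tfrac{u_{ijk}}{p_{ijk}}\Bigr)\, b_{\ell j} c_{\ell k} \;=\; \lambda_\ell\, a_{\ell i} \sum_{j,k} r_{ijk}\, b_{\ell j} c_{\ell k} \;=\; 0,
\]
and after cancelling the $\lambda_\ell$ factor we recover the first stated equation. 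The $b_{\ell j}$ and $c_{\ell k}$ equations follow from identical manipulations, thanks to the symmetry of $p_{ijk}$ and of the M-step under permutation of the three observed variables.

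To finish, I would check two bookkeeping points. First, the fixed-point equation for $\lambda_\ell$ itself, namely $\lambda_\ell\sum_{i,j,k} r_{ijk}\, a_{\ell i} b_{\ell j} c_{\ell k}=0$, is redundant: summing the $a$-equations over $i$ and using $\sum_i a_{\ell i}=1$ recovers it, so no additional generator is needed. Second, the stochasticity constraints $\sum_i a_{\ell i}=1$, $\sum_j b_{\ell j}=1$, $\sum_k c_{\ell k}=1$, $\sum_\ell\lambda_\ell=1$ are preserved exactly by the M-step, so the whole orbit of the EM map stays in $\Theta$ and these constraints need not be reimposed as fixed-point equations. The one technicality worth flagging is the cancellation of $\lambda_\ell$ from the $a_{\ell i}$ (and analogous) equations: on the locus $\lambda_\ell=0$, the parameters $a_{\ell i}, b_{\ell j}, c_{\ell k}$ play no role in $P$ and the EM update is ill-defined, so one either restricts attention to $\lambda_\ell>0$ (as is tacitly done in~\cite{KRS}) or interprets the displayed equations as generators of the fixed-point ideal after removing the extraneous $\lambda_\ell$ component. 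This is the only genuinely delicate step; everything else is a direct transcription of the EM update.
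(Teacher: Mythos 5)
Your derivation is correct and is essentially the argument the paper relies on: the paper's proof simply states that it is ``virtually identical to the proof of \cite[Theorem 3.5]{KRS}'', and that proof is exactly the substitution of the E-step into the M-step, the use of the stochasticity identities $\sum_{j,k} b_{\ell j}c_{\ell k}=1$ (etc.) to absorb $u_{+++}$, and the cancellation of $\lambda_\ell$ that you carry out. Your bookkeeping remarks (redundancy of the $\lambda_\ell$ equation and the caveat on the $\lambda_\ell=0$ locus) match how that reference handles the same points, so nothing further is needed.
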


\begin{proof}
The proof is virtually identical to the proof of~\cite[Theorem 3.5]{KRS}. 
\end{proof}


We call the ideal generated by the equations in Proposition \ref{defeqs}
the \textit{EM fixed point ideal} and denote it by $\F$. This ideal is
not prime and it defines a reducible variety. A minimal prime of $\F$
is called \textit{relevant} if it contains none of the $8$ polynomials
$p_{ijk} = \sum_{\ell = 1}^2 a_{\ell i} b_{\ell j} c_{\ell k}$ and none of the six ideals $\left \langle a_{l1},a_{l2} \right \rangle$, $\left \langle b_{l1},b_{l2} \right \rangle$ and $\left \langle c_{l1},c_{l2} \right \rangle$.
Equivalently, an ideal is relevant, if not all of its solutions $P$ has a
coordinate that is identically zero, and after normalizing the parameters, it comes from stochastic matrices.

\begin{theorem}
\label{main}
The radical of the EM fixed point ideal $\F$  for $\M_3$ has precisely $63$ relevant primes consisting of $9$ orbital classes. 
\end{theorem}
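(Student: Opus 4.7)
The plan is to combine an explicit symbolic computation of the primary decomposition of $\F$ with an orbit analysis under the natural symmetry group of $\M_3$. Observe first that each of the twelve generators in Proposition \ref{defeqs} is a product of a single parameter (one of the $a_{\ell i}$, $b_{\ell j}$, $c_{\ell k}$) with a linear form in the remaining parameters whose coefficients involve the rational expressions $r_{ijk} = u_{+++} - u_{ijk}/p_{ijk}$. After clearing denominators by multiplying through by $\prod p_{ijk}$, we obtain a polynomial ideal in $\Q[\lambda, a_{\ell i}, b_{\ell j}, c_{\ell k}]$ whose coefficients are polynomial in the data $u_{ijk}$. I would feed this ideal to a computer algebra system such as Macaulay2 or Singular, treating the $u_{ijk}$ as generic parameters (either over $\Q(u_{111},\ldots,u_{222})$ or by specializing to a generic rational point and later checking that the decomposition is independent of the specialization), and ask it to return the list of minimal associated primes of the radical.

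Once the raw list of minimal primes is produced, the next step is to organize it under the natural symmetry group $G$ acting on $\M_3$. This group is generated by (i) the label-swap on the hidden class, which interchanges the two rows of $A$, $B$, $C$ and the entries of $\boldsymbol\lambda$, (ii) the $S_3$ that permutes the observed variables $X_1,X_2,X_3$ and hence the triple $(A,B,C)$, and (iii) for each observed variable the $S_2$ that swaps its two states, i.e.\ swaps the columns of the corresponding matrix. The EM fixed point equations and the notion of relevance are manifestly $G$-invariant, so the set of minimal primes is a union of $G$-orbits. I would then compute orbit representatives by tabulating, for each prime, a small set of $G$-invariants (its Krull dimension, degree, and the multiset of parameter coordinates appearing in its generators) to sort the list; candidate matches are then confirmed by exhibiting an explicit group element carrying one prime to another.

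The relevance filter is applied as the last step: from the full list of associated primes I discard any prime that contains one of the $p_{ijk} = \sum_\ell \lambda_\ell a_{\ell i} b_{\ell j} c_{\ell k}$, and any prime containing a row ideal $\langle a_{\ell 1}, a_{\ell 2}\rangle$, $\langle b_{\ell 1}, b_{\ell 2}\rangle$, or $\langle c_{\ell 1}, c_{\ell 2}\rangle$ (these correspond to collapsed stochastic rows and so cannot arise from a valid EM run). What remains should be exactly the 63 relevant primes, and their partition into orbits yields the nine orbital classes. As a sanity check, I would match the dimensions and codimensions of these orbits against the boundary strata enumerated in Theorem \ref{thm-main1} and the strata of Section \ref{subsec:MLE-M3}, since by Lemma \ref{lemma:EM_fixed_points} every boundary stratum carrying an MLE must correspond to at least one relevant component.

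The main obstacle will be making the symbolic primary decomposition actually terminate: the ideal lives in $13$ variables (after imposing the simplex normalizations) with parametric coefficients, and naive \texttt{minimalPrimes} calls may not finish. I would work around this by exploiting the product structure of the generators to perform a guided case split---for each of the twelve equations branching on whether the linear factor or the parameter factor vanishes---pruning branches where a full row $\langle a_{\ell 1}, a_{\ell 2}\rangle$ etc.\ is generated (hence irrelevant) before recursing. A secondary difficulty is verifying that two primes obtained from different branches are actually equal rather than merely isomorphic under $G$; this I would handle by computing reduced Gr\"obner bases with respect to a fixed term order and comparing them directly, rather than relying on set-theoretic containment tests.
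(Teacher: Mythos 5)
Your proposal follows essentially the same route as the paper: the ``guided case split'' you describe in your final paragraph---branching on whether the parameter factor or the linear factor of each generator vanishes, reducing by the symmetry group (hidden-label swap, permutation of $A,B,C$, and column swaps), and filtering by relevance---is precisely the cellular-component method of Eisenbud--Sturmfels that the paper uses, computing $\F_S = ((\F+\langle S\rangle):(\prod S^c)^\infty)$ for orbit representatives $S$ of vanishing parameter sets and then calling \texttt{minimalPrimes} on each cell. The only details you leave implicit are the saturation by the complementary parameters (so that each cell captures only the locus where the parameters outside $S$ are nonzero) and the cross-orbit containment check that discards minimal primes of one cell which contain another cell---your reduced Gr\"obner basis comparison detects exact duplicates but not these non-minimal components.
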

\begin{proof}
This proof follows the proof of \cite[Theorem 5.5]{KRS} in using the approach of cellular
components from~\cite{MR1394747}. 
The EM fixed point ideal $\F$ is given by
\[ \left \langle a_{\ell i} \left( \sum_{j,k = 1}^2 r_{ijk} b_{\ell j}
    c_{\ell k} \right), b_{\ell j} \left( \sum_{i,k = 1}^2 r_{ijk}
    a_{\ell i} c_{\ell k} \right), c_{\ell k} \left( \sum_{i,j = 1}^2
    r_{ijk} a_{\ell i} b_{\ell j}\right) : i,j,k,l = 1,2 \right
\rangle. \]  Any prime ideal containing $\F$ contains
either $a_{\ell i}$ or $\sum_{j,k = 1}^2 r_{ijk} b_{\ell j} c_{\ell
  k}$ for $\ell, i \in \{1,2\}$, and either $b_{\ell j}$ or
$\sum_{i,k = 1}^2 r_{ijk} a_{\ell i} c_{\ell k}$ for $\ell, j \in
\{1,2\}$, and either $c_{\ell k}$ or $\sum_{i,j = 1}^2 r_{ijk} a_{\ell
  i} b_{\ell j}$ for $\ell, k \in \{1,2\}$.  We categorize all
primes containing $\F$ according to the set $S$ of parameters $a_{\ell
  i}$, $b_{\ell j}$, and $c_{\ell k}$ contained in them.  The symmetry
group acts on the parameters by permuting the rows of $A$, $B$, and
$C$ simultaneously, the columns of $A$, $B$, and $C$ separately, and
the matrices $A$, $B$, and $C$ themselves. For each orbit that
consists of relevant ideals, we pick one representative $S$ and
compute the \textit{cellular component} $\F_S = ((\F + \langle S
\rangle):(\prod S^c)^\infty)$, where $S^c = \{a_{11},\ldots,a_{22},b_{11},\ldots,b_{22}, c_{11},\ldots, c_{22}\} \setminus S$.  Next
we remove all representatives $S$ such that $\F_T \subset \F_S$ for a
representative $T$ in another orbit.  For each remaining cellular component $\F_S$, we compute its
minimal primes. In each case, we use either the
\texttt{Macaulay2} \texttt{minimalPrimes} function or the linear elimination
sequence from \cite[Proposition 23(b)]{garcia2005algebraic}. Finally, we remove those minimal primes of $\F_S$
that contain a cellular component $\F_T$ for a set $T$ (not necessarily a representative) in
another orbit. The remaining $9$ minimal primes correspond to the rows of
Table \ref{r1} and are uniquely determined by their properties. 
  These
are the set $S$, the degree and codimension, the ranks $rA =
\rank(A)$, $rB = \rank(B)$, and $rC = \rank(C)$ at a generic point. The $63$ ideals are
obtained when counting each orbit with its orbit size in the last
column of Table \ref{r1}.
\end{proof}

\begin{table}[h]
\caption{Minimal primes of EM fixed point ideal $\F$ for $2 \times 2 \times 2$-tensors of rank$_+$ 2.}
\begin{center}
\begin{tabular}{lcccccccccccc} \hline \textbf{Class S} & $\mathbf{|S|}$ & $a$'s & \textbf{$b$'s} & \textbf{$c$'s} & \textbf{deg} & \textbf{codim} & \textbf{rA} & \textbf{rB} & \textbf{rC} & \textbf{orbit} & \textbf{type in Theorem~\ref{thm-main1}}  \\  \hline $\{\emptyset\}$ &0&0&0&0&60&7&1&1&1&1&3-dimensional type 5 \\ &0&0&0&0&48&7&2&2&1&1&4-dimensional type 4 \\ 
&0&0&0&0&48&7&2&1&2&1&4-dimensional type 4 \\
&0&0&0&0&48&7&1&2&2&1&4-dimensional type 4 \\ &0&0&0&0&1&8&2&2&2&1&7-dimensional type 1 \\ $\{a_{11}\}$
 &1&1&0&0&5&8&2&2&2&12&6-dimensional type 2 \\ $\{a_{11}, a_{22}\}$ & 2 & 2 & 0 & 0 & 25 & 8  &2 &2 &2&6&5-dimensional type 3\\ $\{a_{11}, b_{11}\}$ & 2 & 1 & 1 & 0 & 11 & 8 & 2 & 2 & 2&24&5-dimensional type 2 \\ $\{a_{11}, b_{11}, c_{11}\}$ & 3 & 1 & 1 & 1 & 23 & 8 & 2 & 2 & 2 & 16&4-dimensional type 2 \\\hline \hline
\end{tabular} 
\end{center}
\label{r1}
\end{table}

The rows of Table~\ref{r1} correspond to different boundary strata in
Theorem~\ref{thm-main1}, and this correspondence is reported in the last column of the table. 
The orbit sizes in Table~\ref{r1} are twice the number of
corresponding strata in Corollary~\ref{cor:number_of_strata}, except
for the rows represented by $\{\emptyset\}$. 
This is because
the ideal obtained by switching the rows of $A$, $B$ and $C$ is counted as 
distinct from the original ideal, though the tensors in the image of both
sets of parameters are identical with parameters that differ only by label swapping. 
For example, the minimal prime 
represented by $\{a_{21}\}$ is one the 12 ideals in the orbit of the minimal prime represented by $\{a_{11}\}$, 
although $\{a_{11}=0\}$ and $\{a_{21}=0\}$ define the same boundary stratum.



In the next example we explain how to derive the EM fixed points and the potential MLEs from the former type of minimal primes of the EM fixed point ideal.

\begin{example} \label{ex:one_em_minimal_prime}
Consider the minimal prime of the EM fixed point ideal corresponding to $a_{11}=a_{22}=0$:
\begin{align*}
I_1 = &\langle a_{22},a_{11},r_{212}r_{221}-r_{211}r_{222},c_{11}r_{221}+c_{12}r_{222},b_{11}r_{212}+b_{12}r_{222},c_{11
      }r_{211}+c_{12}r_{212},\\
      &b_{11}r_{211}+b_{12}r_{221},r_{112}r_{121}-r_{111}r_{122},c_{21}r_{121}+c_{22}r_{122
      },b_{21}r_{112}+b_{22}r_{122},\\
      &c_{21}r_{111}+c_{22}r_{112},b_{21}r_{111}+b_{22}r_{121} \rangle
   .
\end{align*}
We add to the ideal $I_1$ the ideal of the parametrization map
\begin{align*}
I_2 = & \langle -a_{21}b_{21}c_{21}+p_{111},-a_{21}b_{21}c_{22}+p_{112},-a_{21}b_{22}c_{21}+p_{121},-a_{
      21}b_{22}c_{22}+p_{122},\\
      &-a_{12}b_{11}c_{11}+p_{211},-a_{12}b_{11}c_{12}+p_{212},-a_{12}b_{1
      2}c_{11}+p_{221},-a_{12}b_{12}c_{12}+p_{222} \rangle .
\end{align*}
Eliminating parameters $a_{11},\ldots,c_{22}$ from $I_1+I_2$, gives the ideal
\begin{align*}
J=&\langle p_{212}p_{221}-p_{211}p_{222},r_{221}p_{221}+r_{222}p_{222},r_{211}p_{221}+r_{212
      }p_{222},r_{212}p_{212}+r_{222}p_{222},\\
      &r_{211}p_{212}+r_{221}p_{222},r_{221}p_{211}+r_{
      222}p_{212},r_{212}p_{211}+r_{222}p_{221},r_{211}p_{211}-r_{222}p_{222},\\
      &p_{112}p_{121
      }-p_{111}p_{122},r_{121}p_{121}+r_{122}p_{122},r_{111}p_{121}+r_{112}p_{122},r_{112}p_{
      112}+r_{122}p_{122},\\
      &r_{111}p_{112}+r_{121}p_{122},r_{121}p_{111}+r_{122}p_{112},r_{112
      }p_{111}+r_{122}p_{121},r_{111}p_{111}-r_{122}p_{122},\\
      &r_{212}r_{221}-r_{211}r_{222},r_{
      112}r_{121}-r_{111}r_{122} \rangle
\end{align*}
Finally, we substitute to the ideal $J$ the expressions
\begin{equation*}\label{uijk} 
r_{ijk} = u_{+++} - \frac{u_{ijk}}{p_{ijk}}  
\end{equation*}
and clear the denominators. To obtain an estimate for $p_{111}$, we eliminate all other $p_{ijk}$. This gives the ideal generated by $p_{111}u_{1++}u_{+++}-u_{11+} u_{1+1}$. Hence
$$
p_{111}=\frac{u_{11+} u_{1+1}}{u_{1++}u_{+++}},
$$
as in Section~\ref{sec:MLE}. 
\end{example}

We used the method in Example~\ref{ex:one_em_minimal_prime} to verify
the formulas in Section~\ref{subsec:MLE-M3} for MLEs on different
strata for all cases besides the $3$- and $4$-dimensional strata. For
the $3$- and $4$-dimensional strata, the elimination of $p_{ijk}$'s did
not terminate.

Since the rows of Table~\ref{r1} are in correspondence with the
boundary strata of $\M_3$, we believe that the method of decomposing
the EM fixed point ideal is useful for identifying boundary
strata for models whose geometry is not as well understood as that of
$\M_3$. We illustrate this idea with the decomposition of $\M_{3,3}$.

\begin{theorem}
The radical of the EM fixed point ideal $\F$  for $\M_{3,3}$ has $317$ relevant primes consisting of $21$ orbital classes. 
The properties of these orbital classes are listed in Table~\ref{r3}. 
\end{theorem}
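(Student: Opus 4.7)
The plan is to follow the same strategy as in the proof of Theorem~\ref{main}, with the rank parameter raised from $2$ to $3$. First I would write down the analogue of Proposition~\ref{defeqs} for $\M_{3,3}$: the EM fixed point ideal $\F$ is generated by
\[
a_{\ell i}\Bigl(\sum_{j,k=1}^{2} r_{ijk}\, b_{\ell j} c_{\ell k}\Bigr),\quad
b_{\ell j}\Bigl(\sum_{i,k=1}^{2} r_{ijk}\, a_{\ell i} c_{\ell k}\Bigr),\quad
c_{\ell k}\Bigl(\sum_{i,j=1}^{2} r_{ijk}\, a_{\ell i} b_{\ell j}\Bigr),
\]
for $\ell=1,2,3$ and $i,j,k\in\{1,2\}$, where $r_{ijk}=u_{+++}-u_{ijk}/p_{ijk}$ and $p_{ijk}=\sum_{\ell=1}^{3}a_{\ell i}b_{\ell j}c_{\ell k}$ (absorbing $\lambda_\ell$ into the rows, as in the $r=2$ case). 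The derivation is formally identical to the argument in~\cite[Theorem~3.5]{KRS}.

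Next, any prime ideal $\mathfrak p\supset\F$ must, for each $(\ell,i)$, $(\ell,j)$, $(\ell,k)$, contain either the parameter factor or the linear combination inside the parentheses. Mirroring the proof of Theorem~\ref{main}, I would partition the candidate primes by the set $S$ of variables $a_{\ell i},b_{\ell j},c_{\ell k}$ they contain; here $S$ ranges over subsets of an $18$-element set. The model's symmetry group now permutes the three latent classes (rows of $A,B,C$) simultaneously, swaps the two columns of each of $A,B,C$ independently, and permutes $A,B,C$ among themselves, so it has order $3!\cdot(2!)^3\cdot 3!=288$. For each orbit representative $S$ I would compute the cellular component $\F_S=((\F+\langle S\rangle):(\prod S^c)^{\infty})$, and discard any representative for which $\F_T\subseteq\F_S$ with $T$ in a different orbit.

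For each surviving cellular component I would compute its minimal primes with \texttt{Macaulay2}'s \texttt{minimalPrimes}, falling back to the linear-elimination sequence of~\cite[Proposition~23(b)]{garcia2005algebraic} in the cases where the built-in routine stalls. After pruning minimal primes that contain a cellular component from another orbit, the remaining ones are precisely the relevant primes (those containing no $p_{ijk}$ and none of the stochastic-row ideals $\langle a_{l1},a_{l2}\rangle$, $\langle b_{l1},b_{l2}\rangle$, $\langle c_{l1},c_{l2}\rangle$). Each orbit would then be tagged with its characteristic invariants (degree, codimension, and the generic ranks of $A,B,C$) as in Table~\ref{r1}, and the orbit size would be computed as $288/|\mathrm{Stab}(S)|$. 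Summing the orbit sizes over the $21$ surviving classes should yield the claimed total of $317$, reproducing Table~\ref{r3}.

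The principal obstacle will be computational. The ambient ring now carries $18$ matrix parameters (plus the eight coordinates $p_{ijk}$), and cellular components corresponding to small subsets $S$ typically have large degree and many components of mixed codimension; a direct \texttt{minimalPrimes} call will not terminate on the hardest cases. These will need to be attacked by interleaving saturation with elimination of the most symmetric variables first and by exploiting the linear-elimination technique of \cite{garcia2005algebraic} to split off components as in the proof of Theorem~\ref{main}. A secondary but nontrivial difficulty is the combinatorial bookkeeping under the enlarged group of order $288$: one must both ensure orbits are traversed exactly once and verify that minimal primes arising from nominally distinct representatives do not in fact coincide after symmetry is applied, which is the step most prone to silent error in going from $21$ orbital classes to the final count of $317$ primes.
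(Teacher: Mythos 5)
Your proposal follows essentially the same route the paper takes: the paper gives no separate argument for the $\M_{3,3}$ case but relies on the identical cellular-decomposition computation used in the proof of Theorem~\ref{main}, with the fixed point equations extended to $\ell=1,2,3$, the symmetry group enlarged to order $3!\cdot(2!)^3\cdot 3!=288$, and minimal primes extracted via \texttt{Macaulay2} or the linear elimination sequence of \cite{garcia2005algebraic} before pruning against cellular components from other orbits. Your orbit-size bookkeeping is consistent with Table~\ref{r3} (the listed orbit sizes sum to $317$ across the $21$ classes), so the proposal is correct and matches the paper's approach.
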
 

\begin{longtable}{lcccccccccc}
\caption{Minimal primes of EM fixed point ideal $\F$ for $2 \times 2 \times 2$-tensors of rank$_+$ 3.} \label{r3}\\
\hline 
\multicolumn{1}{l}{\textbf{Set S}} & \multicolumn{1}{c}{$|S|$} & \multicolumn{1}{c}{$a$'s} & \multicolumn{1}{c}{$b$'s} & \multicolumn{1}{c}{$c$'s} & \multicolumn{1}{c}{\textbf{deg}} & \multicolumn{1}{c}{\textbf{cdim}} & \multicolumn{1}{c}{\textbf{rA}} & \multicolumn{1}{c}{\textbf{rB}} & \multicolumn{1}{c}{\textbf{rC}} & \multicolumn{1}{c}{\textbf{orbit}} \\ 
\hline 
\endfirsthead
\multicolumn{10}{c}%
{{\tablename\ \thetable{} \textit{(Continued)}}} \\
\hline 
\multicolumn{1}{l}{\textbf{Set S}} & \multicolumn{1}{c}{$|S|$} & \multicolumn{1}{c}{$a$'s} & \multicolumn{1}{c}{$b$'s} & \multicolumn{1}{c}{\textbf{$c$'s}} & \multicolumn{1}{c}{\textbf{deg}} & \multicolumn{1}{c}{\textbf{cdim}} & \multicolumn{1}{c}{\textbf{rA}} & \multicolumn{1}{c}{\textbf{rB}} & \multicolumn{1}{c}{\textbf{rC}} & \multicolumn{1}{c}{\textbf{orbit}} \\
\hline 
\endhead
\endfoot
\hline \hline
\endlastfoot

$\{\emptyset\}$ & 0 & 0 & 0 & 0 & 121 & 10 & 1 & 1& 1 &1 \\ 
 & 0 & 0 & 0 & 0 & 162 & 9 & 1 & 2 & 2 & 1 \\
  & 0 & 0 & 0 & 0 & 162 & 9 & 2 & 1 & 2 & 1 \\
   & 0 & 0 & 0 & 0 & 162 & 9 & 2 & 2 & 1 & 1 \\
    & 0 & 0 & 0 & 0 & 38 & 10 & 2 & 2 & 2 & $6 \times 1$ \\
     & 0 & 0 & 0 & 0 & 1 & 8 & 2 & 2 & 2 & 1\\
$\{a_{11}\}$ 
&1&1&0&0&10&10&2&2&2&18 \\ 
$\{a_{11}, a_{21}\}$
&2&2&0&0&5&9&2&2&2&18 \\ 
$\{a_{11}, b_{11}\}$ &2&1&1&0&39&10&2&2&2&36 \\ 
$\{a_{11},a_{21},a_{32}\}$ &3&3&0&0&50&11&2&2&2&18 \\
$\{a_{11}, b_{11}, c_{11}\}$ &3&1&1&1&60&11&2&2&2&24 \\
$\{a_{11}, a_{21}, b_{11}, b_{21}\}$ &4&2&2&0&11&10&2&2&2&36 \\ 
$\{a_{11}, a_{22}, b_{11}, b_{22}\}$ &4&2&2&0&8&11&2&2&2&36 \\ 
$\{a_{11}, a_{21}, b_{11}, b_{21}, c_{11}, c_{21}\}$ &6&2&2&2&23&11&2&2&2&24 \\ 
$\{a_{11}, a_{21}, b_{11}, b_{22}, c_{11}, c_{22}\}$ &6&2&2&2&20&12&2&2&2&72 \\ 
$\{a_{11}, a_{22}, b_{11}, b_{22}, c_{11}, c_{22}\}$ &6&2&2&2&23&12&2&2&2&24 
\end{longtable}

Some of the ideals listed in Table~\ref{r3} have further constraints on the $3 \times 2$ stochastic matrices $A$, $B$ and $C$ that cannot be read off directly from the table. These constraints are:
\begin{enumerate}
\item One out of the six ideals of degree $38$ corresponding to $\{\emptyset\}$ contains polynomials $a_{11}a_{22}-a_{12}a_{21}$, $b_{21}b_{32}-b_{22}b_{31}$ and $c_{11}c_{32}-c_{12}c_{31}$. Constraints for the rest of the five ideals are obtained by permuting simultaneously the rows of $A$, $B$ and $C$.
\item The ideal corresponding to $\{a_{11}\}$ contains polynomials $b_{21}b_{32}-b_{22}b_{31}$ and $c_{21}c_{32}-c_{22}c_{31}$.
\item The ideal corresponding to $\{a_{11},b_{11}\}$ contains the polynomial $c_{21}c_{32}-c_{22}c_{31}$.
\item The ideal corresponding to $\{a_{11},a_{21},a_{32}\}$ contains polynomials $b_{11}b_{22}-b_{12}b_{21}$ and $c_{11}c_{22}-c_{12}c_{21}$.
\item The ideal corresponding to $\{a_{11},b_{11},c_{11}\}$ contains polynomials $a_{21}a_{32}-a_{22}a_{31}$, $b_{21}b_{32}-b_{22}b_{31}$ and $c_{21}c_{32}-c_{22}c_{31}$.
\end{enumerate}

The semialgebraic description, boundary stratification and closed formulas for MLEs for $\M_{3,3}$ are obtained in~\cite{SM17}. 
The boundary stratification poset for $\M_{3,3}$ agrees with the one for $\M_{3}$ in Figure~\ref{fig:poset}. 
The parameters that yield different types of boundary strata for $\M_{3,3}$ are included in Table~\ref{r3}:
\begin{enumerate}
\item Interior: $\{\emptyset\}$ ($A,B,C$ rank $2$, no further constraints on $A$, $B$ and $C$).
\item Codimension 1 strata: $\{a_{11}\}^*$, $\{a_{11},a_{21}\}$.
\item Exceptional codimension 2 strata: $\{a_{11},a_{21},a_{32}\}^*$.
\item Codimension 2 strata: $\{a_{11},b_{11}\}^*$, $\{a_{11},a_{21},b_{11},b_{21}\}$.
\item Exceptional codimension 3 strata: $\{\emptyset\}^*$ ($A$, $B$ or $C$ rank $1$). 
\item Codimension 3 strata: $\{ a_{11},b_{11},c_{11}\}^*$, $\{a_{11},a_{21},b_{11},b_{21},c_{11},c_{21}\}$.
\item  Unique codimension 4 stratum: $\{\emptyset\}^*$ ($A$, $B$ and $C$ rank $1$).
\item Other: $\{\emptyset\}^*$ ($A,B,C$ rank $2$, further constraints on $A$, $B$ and $C$), $\{a_{11},a_{22},b_{11},b_{22}\}$, $\{a_{11}, a_{21}, b_{11}, b_{22}, c_{11}, c_{22}\}$, $\{a_{11}, a_{22}, b_{11}, b_{22}, c_{11}, c_{22}\}$.
\end{enumerate}
A star indicates that besides setting the elements in the set to zero, further equation constraints on the parameters 
(either rank constraints from Table~\ref{r3} or other constraints from the list above) are needed to define the stratum. 
Taking these further constraints into account, for a fixed type of boundary stratum, all parametrizations from Table~\ref{r3} are minimal. 
All the rows of Table~\ref{r3} that do not give boundary strata lie on the singular locus of $\M_{3,3}$.

\bibliographystyle{plain}
\bibliography{mle2x2x2}
\bigskip
\end{document}